\documentclass[letter,final]{article}
\usepackage[english]{babel}
\usepackage{amsmath,amsfonts,amssymb,amsthm}
\usepackage{latexsym,stmaryrd}
\usepackage{enumerate}

\bibliographystyle{alpha}

\newcommand{\N}{\mathbb{N}}
\newcommand{\Z}{\mathbb{Z}}
\newcommand{\Q}{\mathbb{Q}}
\newcommand{\R}{\mathbb{R}}
\newcommand{\C}{\mathbb{C}}
\newcommand{\F}{\mathbb{F}}

\newcommand{\calO}{\mathcal{O}}
\newcommand{\fraka}{\mathfrak{a}}
\newcommand{\frakm}{\mathfrak{m}}
\newcommand{\frakp}{\mathfrak{p}}
\newcommand{\frakq}{\mathfrak{q}}

\newcommand{\abs}[1]{{\left|{#1}\right|}}
\newcommand{\floor}[1]{{\left\lfloor{#1}\right\rfloor}}

\DeclareMathOperator{\Div}{Div}
\DeclareMathOperator{\Princ}{Princ}
\DeclareMathOperator{\Pic}{Pic}
\DeclareMathOperator{\Id}{Id}
\DeclareMathOperator{\PId}{PId}
\DeclareMathOperator{\support}{supp}

\DeclareMathOperator{\Conorm}{Con}

%
%
{\catcode`\!=8 
 \catcode`\Q=3 
\long\gdef\given#1{88\fi\Ifbl@nk#1QQQ\empty!}
\long\gdef\blank#1{88\fi\Ifbl@nk#1QQ..!}
\long\gdef\nil#1{\IfN@Ught#1* {#1}!}
\long\gdef\IfN@Ught#1 #2!{\blank{#2}}
\long\gdef\Ifbl@nk#1#2Q#3!{\ifx#3}
}

\newenvironment{faketheorem}[2] 
{\trivlist \item[]{\bfseries #1}#2}
{\endtrivlist}
\newcommand{\newfaketheorem}[4]
{\newenvironment{#1}[1][]%
{\refstepcounter{#3}%
\begin{faketheorem}{#2\ \csname the#3\endcsname\if\blank{##1}\else\ (##1)\fi.}%
  {\if\blank{#4}\itshape\else#4\fi}\ }
  {\end{faketheorem}}}
\newcommand{\newfaketheoremast}[3]
{\newenvironment{#1}[1][]%
{\begin{faketheorem}{#2\if\blank{##1}\else\ (##1)\fi.}{\if\blank{#3}\itshape\else#3\fi}\ }
  {\end{faketheorem}}}

\newcounter{definition}[section]

\newfaketheorem{definition}{Definition}{definition}{\itshape}
\newfaketheorem{theorem}{Theorem}{definition}{\itshape}
\newfaketheorem{proposition}{Proposition}{definition}{\itshape}
\newfaketheorem{lemma}{Lemma}{definition}{\itshape}
\newfaketheorem{corollary}{Corollary}{definition}{\itshape}
\newfaketheorem{remark}{Remark}{definition}{\upshape}
\newfaketheorem{remarks}{Remarks}{definition}{\upshape}
\newfaketheorem{example}{Example}{definition}{\upshape}
\newfaketheorem{examples}{Examples}{definition}{\upshape}
\newfaketheorem{conjecture}{Conjecture}{definition}{\itshape}

\newfaketheoremast{definition*}{Definition}{\itshape}
\newfaketheoremast{theorem*}{Theorem}{\itshape}
\newfaketheoremast{proposition*}{Proposition}{\itshape}
\newfaketheoremast{lemma*}{Lemma}{\itshape}
\newfaketheoremast{corollary*}{Corollary}{\itshape}
\newfaketheoremast{remark*}{Remark}{\upshape}
\newfaketheoremast{remarks*}{Remarks}{\upshape}
\newfaketheoremast{example*}{Example}{\upshape}
\newfaketheoremast{examples*}{Examples}{\upshape}

\newenvironment{enuma}{\begin{enumerate}[\upshape (a)]}{\end{enumerate}}
\newenvironment{enumi}{\begin{enumerate}[\upshape (i)]}{\end{enumerate}}

\title{Holes in the Infrastructure of Global Hyperelliptic Function Fields}
\author{Felix Fontein\footnote{Department of Mathematics \&\ Statistics, University of Calgary, 2500
University Drive NW, Calgary, Alberta, Canada T2N 1N4. Email: fwfontei@ucalgary.ca}}


\DeclareMathOperator{\Red}{Red}
\DeclareMathOperator{\ideal}{ideal}
\DeclareMathOperator{\signature}{sig}

\begin{document}
  \maketitle
  
  \begin{abstract}
    We prove that the number of ``hole elements''~$H(K)$ in the infrastructure of a hyperelliptic
    function field~$K$ of genus~$g$ with finite constant field~$\F_q$ with $n + 1$ places at
    infinity, of whom $n' + 1$ are of degree~one, satisfies \[ \abs{\frac{H(K)}{\abs{\Pic^0(K)}} -
    \frac{n'}{q}} = \calO(16^g n q^{-3/2}). \] We obtain an explicit formula for the number of holes
    using only information on the infinite places and the coefficients of the $L$-polynomial of the
    hyperelliptic function field. This proves a special case of a conjecture by E.~Landquist and the
    author on the number of holes of an infrastructure of a global function field.
    
    Moreover, we investigate the size of a hole in case $n = n'$, and show that asymptotically for
    $n \to \infty$, the size of a hole next to a reduced divisor~$D$ behaves like the function
    $\frac{n^{g - \deg D}}{(g - \deg D)!}$.
  \end{abstract}
  
  \section{Introduction}
  
  When considering the infrastructure of a global function field~$K / \F_q(x)$, it turns out that
  its set of $f$-representations represents the divisor class group~$\Pic^0(K)$ (see
  \cite{ff-tioagfoaur}). Some elements of $\Pic^0(K)$ are directly represented by reduced ideals of
  $\calO$, the integral closure of $\F_q[x]$ in $K$, while others need additional information for
  the infinite places. We say that an element of $\Pic^0(K)$ is a \emph{hole element} if it does not
  corresponds to a reduced ideal. (Note that we will make this more precise in
  Section~\ref{relatinginfrastructure}.) Two natural questions is: how many hole elements are there?
  And if some of them cluster together, what is the size of this cluster?
  
  These questions are related to applications. When implementing arithmetic in $\Pic^0(K)$ using the
  infrastructure, it is most efficient if one avoids hole elements. Hence, if the number of hole
  elements is small, the chance that one avoids hole elements is big: it makes sense to optimize the
  algorithms for arithmetic for this case.
  
  In practical experiments made by E.~Landquist \cite{landquist-thesis} and the author
  \cite{fontein-diss} with a small number of infinite places, it turns out that the chance that a
  random element of $\Pic^0(K)$ is a hole element is about $\frac{1}{q}$. There is also a heuristic
  explanation of this phenomenon: hole elements only occur near to reduced ideals of degree~$<
  g$. Assuming that the norms of reduced ideals are uniformly distributed in $(\F_q)_{\le g}[x]$,
  one obtains that the chance that a random reduced ideal has degree~$< g$ should be around
  $\frac{1}{q}$ \cite[p.~132]{fontein-diss}.
  
  In this paper, we will show that this conjecture is (almost) true for hyperelliptic\footnote{We
  consider elliptic function fields as a special case of hyperelliptic function fields.} function
  fields~$K$: the probability is not $\frac{1}{q}$ but $\frac{n}{q}$, if $n' + 1$ is the number of
  infinite places of $K / \F_q(x)$ which have degree~one. The error term is indeed dominated by
  $q^{-3/2}$ in this case, with an additional factor of $16^g n$, where $g$ is the genus of $K$ and
  $n + 1$ the total number of infinite places of $K / \F_q(x)$.
  
  Another question related to holes of infrastructures is their ``size'': hole elements group
  ``around'' reduced ideals of degree~$< g$. When we define a \emph{hole} next to a reduced
  ideal~$\fraka$ as the set of hole elements whose reduced ideal is $\fraka$, one can give upper and
  lower bounds for the size of a hole in case $n = n'$, i.e. all infinite places of $K / \F_q(x)$
  are of degree~one. It turns out that asymptotically for $n \to \infty$, assuming that $\fraka$
  avoids certain places, our bounds imply that the size behaves like the function $\frac{n^{g - \deg
  \fraka}}{(g - \deg \fraka)!}$.
  
  We first investigate the arithmetic of hyperelliptic function fields in
  Section~\ref{arithhypersection}, to get an explicit description of the set of reduced divisors. In
  Section~\ref{relatinginfrastructure}, we sketch how the infrastructure looks like and make more
  precise what the holes in it are. Moreover, we consider the elliptic function field case and give
  a (mostly) ``local'' criterion whether a divisor is reduced. By generalizing this definition of
  being reduced, we describe generating functions of these sets of reduced divisors vanishing at a
  set~$S$ of places\footnote{In the above notation, $\abs{S} = n + 1$.} in Section~\ref{genfuns}. We
  investigate how these functions change when $S$ changes and obtain an explicit description of the
  set of holes. Next, we consider the case of $\abs{S} = 1$ in Section~\ref{countingreddivs}, show
  that the generating function is rational and give estimations for certain coefficients of the
  generating function. In Section~\ref{holecountingsect}, we use the results to show our first main
  result, namely a bound on the number of holes and an explicit formula, and in
  Section~\ref{onsizeofholes} we show the result on the size of holes. Finally, we conclude in
  Section~\ref{conclusion} with a few conjectures for the case of arbitrary global function fields.
  
  \subsection*{Acknowledgments}
  I would like to thank Eric Landquist for the discussions on the number of holes and Renate
  Scheidler for the suggestion to study the hyperelliptic case. Moreover, I would like to thank
  Florian He\ss\ and Andreas Stein for their comments after a talk the author gave on this subject
  at the Carl von Ossietzky University of Oldenburg.
  
  \section{Arithmetic of Hyperelliptic Function Fields}
  \label{arithhypersection}
  
  We begin with reviewing the arithmetic of a hyperelliptic function field~$K$. We want to work with
  a representation of $K/R$, where $R$ is a quadratic rational subfield of $K$, without having the
  notion of an infinite place of $R$. We begin with very general results on the arithmetic of a
  function field.
  
  Let $k$ be a perfect field and $K$ a hyperelliptic function field with exact field of constants
  $k$. Let $g$ denote the genus of $K$. If $g > 1$, let $R$ be the unique rational subfield of $K$
  with $[K : R] = 2$ and let $\Conorm_{K/R}$ denote the conorm map $\Div(R) \to \Div(K)$.
  
  Let $\frakp$ be a place of $K$ of degree~one. The first few results are true for arbitrary
  function fields as well: the only requirement is that $\frakp$ is a place of degree~one. For more
  information, see \cite{hessRR, ff-tioagfoaur}.
  
  \begin{definition}
    Let $D \in \Div(K)$ be a divisor. We say that $D$ is \emph{reduced} with respect to $\frakp$ if
    $L(D) = k$ and $\nu_\frakp(D) = 0$. Denote the set of reduced divisors by $\Red_\frakp(K)$.
  \end{definition}
  
  Note that we always have $0 \in \Red_\frakp(K)$. Reduced divisors allow to describe the divisor
  class group~$\Pic^0(K) = \Div^0(K) / \Princ(K)$:
  
  
    
    
  
  \begin{proposition}
    The map \[ \Red_\frakp(K) \to \Pic^0(K), \qquad D \mapsto D - (\deg D) \frakp + \Princ(K) \] is
    a bijection, mapping $0 \in \Red_\frakp(K)$ to the neutral element of $\Pic^0(K)$. \qed
  \end{proposition}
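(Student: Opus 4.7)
The plan is to prove well-definedness together with the image of $0$, then handle injectivity and surjectivity separately, the latter by a minimality argument using Riemann--Roch. First I would check that the map is well defined into $\Pic^0(K)$: since $\frakp$ has degree one, $\deg(D - (\deg D)\frakp) = 0$, so the coset $D - (\deg D)\frakp + \Princ(K)$ really lies in $\Pic^0(K)$. The zero divisor is reduced (note $L(0) = k$ and $\nu_\frakp(0) = 0$), and it is sent to $0 + \Princ(K)$, the neutral element, so that part is immediate.

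For injectivity, suppose $D_1, D_2 \in \Red_\frakp(K)$ have the same image. Then there exists $f \in K^\times$ with $D_1 - D_2 = (\deg D_1 - \deg D_2)\frakp + \divisor(f)$. After swapping if necessary, assume $n := \deg D_1 - \deg D_2 \geq 0$. Because $L(D_2) = k$ forces $D_2 \geq 0$, we obtain $D_1 - \divisor(f) = D_2 + n\frakp \geq 0$, so $f^{-1} \in L(D_1)$. But $L(D_1) = k$, so $f$ is a constant, $\divisor(f) = 0$, and $D_1 - D_2 = n\frakp$. Evaluating $\nu_\frakp$ of both sides and using $\nu_\frakp(D_i) = 0$ forces $n = 0$, hence $D_1 = D_2$.

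For surjectivity, pick any representative $A \in \Div^0(K)$ of a given class. Riemann--Roch guarantees $\dim L(A + m\frakp) \geq m + 1 - g > 0$ for $m$ large, so let $m \geq 0$ be the least integer for which $L(A + m\frakp) \neq 0$. Since $\frakp$ has degree one, the inclusion $L(A + (m-1)\frakp) \hookrightarrow L(A + m\frakp)$ has cokernel of dimension at most one; minimality of $m$ therefore forces $\dim L(A + m\frakp) = 1$. Choose any nonzero $f \in L(A + m\frakp)$ and set $D := \divisor(f) + A + m\frakp$. Then $D$ is effective and $\nu_\frakp(D) = 0$: if instead $\nu_\frakp(D) \geq 1$, then $f \in L(A + (m-1)\frakp)$, contradicting minimality. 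Finally, multiplication by $f$ gives an isomorphism $L(D) \cong L(A + m\frakp)$, so $L(D) = k$. Thus $D \in \Red_\frakp(K)$, and $D - (\deg D)\frakp = \divisor(f) + A$ lies in the class of $A$.

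The only subtle point is the surjectivity step; the trick is not to try to produce $D$ directly as an effective representative of $A + m\frakp$ for some \emph{arbitrary} $m$, but to use the \emph{minimal} such $m$, which simultaneously pins down $\dim L(A + m\frakp) = 1$ and rules out $\frakp$ appearing in the support of $D$. Injectivity and well-definedness are then essentially bookkeeping.
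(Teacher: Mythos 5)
Your proof is correct; the paper itself states this proposition without proof (citing Hess and the author's infrastructure paper for background), so there is no argument to compare against, but yours is the standard one: injectivity via $L(D_i)=k$ forcing the connecting function to be constant, and surjectivity via the minimal $m$ with $L(A+m\frakp)\neq 0$, which pins down both $\dim L(A+m\frakp)=1$ and $\nu_\frakp(D)=0$. The only point worth a half-sentence more care is the base case $m=0$ of your minimality argument, where $L(A-\frakp)=0$ follows from $\deg(A-\frakp)<0$ rather than from minimality over nonnegative integers; everything else is complete.
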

  
  One can state a few properties on reduced divisors; we will see that in the hyperelliptic case,
  some of these properties already characterize reduced divisors in hyperelliptic function fields:
  
  \begin{lemma}
    Let $D \in \Red_\frakp(K)$. Then $D \ge 0$ and $\deg D \le g$. In case $R$ is any rational
    subfield of $K$, and we have $D' \in \Div(R)$ with $\Conorm_{K/R}(D') \le D$, then $D' \le 0$.
  \end{lemma}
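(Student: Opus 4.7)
The plan is to handle the three assertions in turn. The first, $D \ge 0$, is immediate: the condition $L(D) = k$ forces $1 \in L(D)$, so $\divisor(1) + D = D \ge 0$. For $\deg D \le g$, I would apply Riemann--Roch: writing $W$ for a canonical divisor on $K$, $\ell(D) = \deg D + 1 - g + \ell(W - D)$, and since $\ell(D) = 1$ and $\ell(W - D) \ge 0$, this forces $\deg D \le g$. The hypothesis $\nu_\frakp(D) = 0$ plays no role in these two steps.

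For the last claim, the strategy is to reduce to the effective case and then exhibit a non-constant element of $L(D)$. Write $D' = D'_+ - D'_-$ with $D'_\pm \ge 0$ of disjoint support. Since distinct places of $R$ have disjoint fibres in $K$ under the restriction map, the divisors $\Conorm_{K/R}(D'_+)$ and $\Conorm_{K/R}(D'_-)$ also have disjoint supports; reading the inequality $\Conorm_{K/R}(D'_+) - \Conorm_{K/R}(D'_-) \le D$ place by place then yields $\Conorm_{K/R}(D'_+) \le D$. Now suppose for contradiction that $D'_+ \ne 0$. Since $R$ has genus zero, Riemann--Roch on $R$ gives $\ell_R(D'_+) \ge \deg D'_+ + 1 \ge 2$, so there is an $f \in L_R(D'_+)$ which is non-constant over the exact constant field $k_0$ of $R$. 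Applying the conorm, $\divisor_K(f) + D \ge -\Conorm_{K/R}(D'_+) + D \ge 0$, so $f \in L(D) = k$. But $k/k_0$ is an algebraic extension, so any $f \in R$ transcendental over $k_0$ remains transcendental over $k$; thus $f \notin k$, a contradiction.

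The main obstacle is this last step: one must carry out the place-by-place reduction from $\Conorm_{K/R}(D') \le D$ to $\Conorm_{K/R}(D'_+) \le D$ using disjointness of supports, and then track the constant-field distinction between $R$ and $K$ carefully enough that ``non-constant over $k_0$'' transfers to ``non-constant over $k$''. The Riemann--Roch computations on $K$ and on the rational function field $R$ are routine.
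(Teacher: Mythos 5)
Your proof is correct, and for the main assertion it takes a genuinely different route from the paper's. The first two claims are handled the same way (the paper invokes Riemann's inequality rather than full Riemann--Roch, but that is the same estimate, and indeed $\nu_\frakp(D)=0$ plays no role there). For the third claim, the paper also begins by reducing to $D' \ge 0$ -- asserted as ``without loss of generality,'' whereas you justify it via the disjoint supports of $\Conorm_{K/R}(D'_+)$ and $\Conorm_{K/R}(D'_-)$ (note this step silently uses $D \ge 0$ at places outside the support of $\Conorm_{K/R}(D'_+)$, which is fine since that was established first). From there the two arguments diverge: the paper sets $\frakq = \frakp \cap R$, uses $\deg\frakq = 1$ to write $D'' = D' - (\deg D')\frakq$ as a principal divisor $(x)_R$, deduces $(x)_K \le D$, hence $x^{-1} \in L(D) = k$ and $D'' = 0$, and finally kills the remaining multiple of $\frakq$ using $\nu_\frakp(D) = 0$. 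You instead ignore $\frakp$ entirely and use Riemann--Roch on the genus-zero field $R$ to produce a non-constant $f \in L_R(D'_+)$ whose conorm lands in $L(D) = k$, a contradiction. Both proofs hinge on exhibiting a non-constant element of $L(D)$; yours is slightly more general (it needs neither $\nu_\frakp(D) = 0$ nor a degree-one place), while the paper's only needs principality of degree-zero divisors on $R$ rather than Riemann--Roch. The constant-field care you take at the end is harmless but unnecessary: a rational subfield of $K$ in this context has exact constant field $k$.
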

  
  In particular, if $K$ is a rational function field, this shows that $\Red_\frakp(R) = \{ 0 \}$,
  i.e. $\Pic^0(K) = 0$.
  
  \begin{proof}
    First, as $1 \in L(D)$, we must have $D \ge 0$. Next, if $\deg D > g$, we have $\dim L(D) \ge
    \deg D + 1 - g > 1$ by Riemann's Inequality, contradicting $L(D) = k$.
    
    Now assume that $R$ is any rational subfield of $K$. Let $\frakq = \frakp \cap R$ and let $D'
    \in \Div(R)$ with $\Conorm_{K/R}(D') \le D$. Without loss of generality, we can assume $D' \ge
    0$. Now $\deg \frakq = 1$, whence $D'' := D' - (\deg D') \frakq$ is principal; i.e. there exists
    some $x \in R^*$ with $(x)_R = D''$. Now
    \begin{align*}
      \Conorm_{K/R}(D') ={} & \Conorm_{K/R}(D'' + (\deg D') \frakq) \\
      {}={} & \Conorm_{K/R}(D'') + (\deg D') \Conorm_{K/R}(\frakq) \\
      {}={} & (x)_K + (\deg D') \Conorm_{K/R}(\frakq)
    \end{align*}
    Note that $\nu_\frakp(\Conorm_{K/R}(D')) = 0$, whence $\Conorm_{K/R}(D') \le D$ and $D' \ge 0$
    imply $\nu_\frakq(D') = 0$. Therefore, $\nu_\frakq(D'') = -\deg D' \le 0$, whence $(x)_K \le
    D$. But this implies $x^{-1} \in L(D) = k$, whence $(x) = 0$, forcing $D' = 0$.
  \end{proof}
  
  Even though we restrict to hyperelliptic function fields, the previous results hold as well for
  general function fields. But from now on, we need that $K$ is hyperelliptic. Let us first handle
  the case~$g = 1$.
  
  \begin{proposition}
    If $g = 1$, let $E$ denote the set of places of degree~one. Then \[ E \to \Red_\frakp(K), \quad
    \frakq \mapsto \begin{cases} 0 & \text{if } \frakq = \frakp, \\ \frakq & \text{if } \frakq \neq
      \frakp \end{cases} \] is a bijection. \qed
  \end{proposition}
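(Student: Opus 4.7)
The plan is to exploit the bound $\deg D \leq g = 1$ from the previous lemma, so that reduced divisors can only have degree $0$ or $1$, and then to match them up explicitly with $E$.

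First, I would classify the reduced divisors by degree. If $D \in \Red_\frakp(K)$ has $\deg D = 0$, then $D \ge 0$ forces $D = 0$. If $\deg D = 1$, then $D \ge 0$ together with $\deg D = 1$ means $D$ is a single place of degree one, say $D = \frakq$, and the condition $\nu_\frakp(D) = 0$ means $\frakq \neq \frakp$. This already shows that the proposed inverse — which sends $0$ to $\frakp$ and $\frakq \neq \frakp$ to $\frakq$ — is well-defined on $\Red_\frakp(K)$ and is a two-sided inverse at the level of sets, giving both injectivity and surjectivity of the map as soon as we know its image lies in $\Red_\frakp(K)$.

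The remaining point is that the proposed images are genuinely reduced. The zero divisor is trivially reduced. For $\frakq \in E$ with $\frakq \neq \frakp$, the valuation condition $\nu_\frakp(\frakq) = 0$ is immediate, so the substance is to check $L(\frakq) = k$. By Riemann's inequality $\dim L(\frakq) \ge \deg \frakq + 1 - g = 1$, so it suffices to rule out $\dim L(\frakq) \ge 2$.

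The main (and essentially only) obstacle is this last step, which I would handle as follows: if $\dim L(\frakq) \ge 2$, pick a nonconstant $f \in L(\frakq)$. Then $(f)_\infty \le \frakq$, so $f$ has at most a single simple pole, which forces $[K : k(f)] = \deg (f)_\infty \le 1$; hence $K = k(f)$ is rational, contradicting $g = 1$. This contradiction gives $L(\frakq) = k$ and completes the proof.
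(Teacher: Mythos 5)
Your proof is correct; the paper states this proposition without proof (it is marked as immediate), and your argument supplies exactly the details one would expect: the preceding lemma forces a reduced divisor to be $0$ or a single degree-one place $\frakq \neq \frakp$, and the converse reduces to $L(\frakq) = k$, which you correctly derive from the standard fact that $\deg (f)_\infty = [K : k(f)]$ rules out a nonconstant function with a single simple pole when $g \ge 1$. Nothing is missing.
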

  
  It turns out that in case~$g > 1$, the converse of the lemma holds as well. Let $R$ be the unique
  rational subfield of index~$2$.
  
%
  \begin{theorem}
    \cite[p.~306, Section~14.1.2]{hehcc}
    \label{reducedtheorem}
    Assume that $g > 1$. Let $D \in \Div(K)$ be a divisor with $D \ge 0$, $\deg D \le g$,
    $\nu_\frakp(D) = 0$ such that, if $D' \in \Div(R)$ satisfies $0 \le \Conorm_{K/R}(D') \le D$,
    then $D' \le 0$. Then $D \in \Red_\frakp(K)$. \qed
  \end{theorem}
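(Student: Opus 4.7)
The plan is to argue by contradiction: suppose $L(D) \ne k$, pick $f \in L(D) \setminus k$, and construct an effective nonzero $D' \in \Div(R)$ with $\Conorm_{K/R}(D') \le D$, violating the hypothesis. A first reduction replaces $D$ by the pole divisor $(f)_\infty$; this is again $\ge 0$, of degree $\le g$, trivial at $\frakp$ (since $\nu_\frakp(f) \ge 0$), and inherits the no-sub-conorm property from $D$, because any $\Conorm(E) \le (f)_\infty$ is also $\le D$. Hence I may assume $D = (f)_\infty$ outright.

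If $f \in R$, then $D = \Conorm_{K/R}((f)_\infty^R)$ with $(f)_\infty^R > 0$, giving the contradicting sub-conorm $D' := (f)_\infty^R$ immediately. So assume $f \notin R$, and let $\sigma$ denote the nontrivial $R$-automorphism of $K$. The no-sub-conorm hypothesis translates locally to $D$ being \emph{semi-reduced}: no support at inert places, multiplicity at most one at ramified places, and at each split prime $\mathfrak{r} = \frakq_1 \frakq_2$, $D$ is supported on at most one of $\frakq_1, \frakq_2$. I would then analyze the norm $h = f \cdot \sigma(f) \in R$ (or the trace $f + \sigma(f)$, whichever is non-constant---at least one is, since otherwise $f$ would be algebraic over $k$, hence in $k$). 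The valuation inequality $\nu_\frakq((h)_\infty) \le \nu_\frakq((f)_\infty) + \nu_\frakq((\sigma f)_\infty)$ combined with the identity $(f)_\infty + (\sigma f)_\infty = \Conorm_{K/R}(D_R)$---where $D_R \in \Div(R)$ is the effective divisor of degree $\deg D$ read off from the split and ramified parts of $D$---yields $D' := (h)_\infty^R \le D_R$ and hence $\Conorm_{K/R}(D') \le D + \sigma(D)$.

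The main obstacle is to sharpen this bound from $\Conorm_{K/R}(D') \le D + \sigma(D)$ to $\Conorm_{K/R}(D') \le D$. A mismatch appears precisely at split primes $\mathfrak{r} = \frakq_1 \frakq_2$ where $f$ has a pole at $\frakq_2$ and a zero at $\frakq_1$: the norm's pole at $\mathfrak{r}$ is shallower than $\nu_\mathfrak{r}(D_R)$, so its conorm may stick out above $D$ at $\frakq_1$. I would fix this either by perturbing $f \mapsto f - c$ for a generic $c \in k$ (destroying the offending zeros whenever $\abs{k}$ exceeds the finite number of bad constants), or, uniformly in the constant field, by appealing to the hyperelliptic refinement of Clifford's theorem: any linear pencil of degree $\le g$ on a hyperelliptic curve of genus $\ge 2$ is composed of the hyperelliptic pencil, and this immediately produces a conjugate pair in some member of the linear system of $D$ and thus the desired sub-conorm. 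Reconciling these two viewpoints at split primes is, I expect, the technical crux of the argument.
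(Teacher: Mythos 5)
The paper offers no proof of this theorem at all: it is quoted from \cite[Section~14.1.2]{hehcc} and stated as already established, so there is no in-paper argument to compare yours against and your attempt must stand on its own. It does not. You yourself flag the passage from $\Conorm_{K/R}(D') \le D + \sigma(D)$ to $\Conorm_{K/R}(D') \le D$ as ``the technical crux,'' and that crux is not a reconciliation problem but an obstruction that kills the norm/trace strategy outright. The pole divisor of any element of $R^*$ is the conorm of a divisor of $R$ and hence $\sigma$-invariant; so if $f$ has a pole at one place $\frakq_1$ of a split pair while $D = (f)_\infty$ does not meet $\frakq_2 = \sigma(\frakq_1)$ (which is exactly the generic situation under your contradiction hypothesis), then both $f\sigma(f)$ and $f + \sigma(f)$ acquire a pole at $\frakq_2$: indeed $\nu_{\frakq_2}(f\sigma(f)) = \nu_{\frakq_2}(f) + \nu_{\frakq_1}(f) < 0$ already when $\nu_{\frakq_2}(f) = 0$, so no zero of $f$ at $\frakq_2$ is needed to create the overshoot. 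Consequently the conorm of the pole divisor of the norm or trace is never $\le D$ there, and the perturbation $f \mapsto f - c$ is aimed at the wrong target --- it alters zeros, not poles, so the overshoot persists for every $c$ (and in any case $k$ is only assumed perfect, possibly a very small finite field, with no descent argument supplied). You also missed that the norm overshoots at ramified places in the support of $D$: a simple pole of $f$ at a ramified $\frakq$ gives $f\sigma(f)$ a double pole at $\frakq$, whence $\Conorm_{K/R}\bigl((f\sigma(f))_\infty^R\bigr) \ge 2\frakq \not\le D$.

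Your fallback --- the structure of special linear systems on a hyperelliptic curve --- is the right proof, but it is only gestured at, and as stated it is insufficient: you conclude that a conjugate pair appears in \emph{some member} of the linear system of $D$, whereas the theorem needs the pair inside $D$ itself. The correct chain is: $\deg D \le g$ and $\ell(D) \ge 2$ force $\ell(W - D) \ge 1$ by Riemann--Roch, so $D$ is special; the structure theorem for special linear systems on a hyperelliptic curve of genus $g > 1$ gives $|D| = \pi^*|\mathcal{O}_{\mathbb{P}^1}(r)| + B$ with $r = \ell(D) - 1 \ge 1$ and $B$ the fixed part; and since $D$ is itself a member of $|D|$ and every member of $|r g^1_2|$ with $r \le g-1$ is a full pullback (because $\ell(r g^1_2) = r+1$ already over $\mathbb{P}^1$), one gets $D = \Conorm_{K/R}(E) + B$ for an effective $E > 0$ on $R$, contradicting the hypothesis. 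That last pullback step is precisely what places the conjugate pair inside $D$ rather than in some other member of $|D|$, and it is the content your sketch omits.
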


  Therefore, we have the explicit description of $\Red_\frakp(K)$ as the set \[ \left\{ D \in
  \Div(K) \;\middle|\; \begin{matrix} D \ge 0, \quad \nu_\frakp(D) = 0, \quad \deg D \le g, \\
    \forall D' \in \Div(R) : \Conorm_{K/R}(D') \le D \Rightarrow D' \le 0 \end{matrix} \right\}, \]
  together with a bijection~$\Red_\frakp(K) \to \Pic^0(K)$, $D \mapsto D - (\deg D) \frakp +
  \Princ(K)$. We will show in the next section how this leads to a combinatorial approach to
  describe the hole elements in $\Pic^0(K)$.
  
  \section{Relating Certain Reduced Divisors to the Infrastructure}
  \label{relatinginfrastructure}
  
  Assume that $K$ is hyperelliptic of genus~$g$ with exact constant field~$k$. Let $R$ be a rational
  subfield~$R$ of index~$[K : R] = 2$; in case $g > 1$, $R$ is unique. Let $\frakp$ be a place of
  $K$ of degree~one. We have seen that we have a bijection $\Red_\frakp(K) \to \Pic^0(K)$ and an
  explicit description of $\Red_\frakp(K)$. Let $S$ be a set of places of $K$ containing~$\frakp$;
  for convenience, let $S_1 := \{ \frakp \in S \mid \deg \frakp = 1 \}$. We are interested in the
  set \[ \Red_S(K) := \{ D \in \Red_\frakp(K) \mid \nu_{\frakp'}(D) = 0 \text{ for all } \frakp' \in
  S \}; \] more precisely, we are interested how its size compares to $\Red_\frakp(K) = \Red_{\{
  \frakp \}}(K)$.
  
  The reason why we are interested in this set is that it appears in studying infrastructures. Let
  $x \in K^*$ be an element whose poles are precisely the elements in $S$. Let $\calO_S$ be the
  integral closure of $k[x]$ in $K$; then the ``infinite places'' of the extension $K / k(x)$,
  i.e. the places of $K$ lying over the infinite place of $k(x)$, are exactly $S$. We have a
  surjection from $\Div(K)$ onto the (nonzero fractional) ideal group $\Id(\calO_S)$ of $\calO_S$,
  given by \[ \ideal_S : \sum_\frakp n_\frakp \frakp \mapsto \prod_{\frakp \not\in S'}
  (\frakm_\frakp \cap \calO_S)^{-n_\frakp}, \] where $\frakm_\frakp$ is the maximal ideal in the
  valuation ring~$\calO_\frakp$ of $\frakp$. Note that $\ideal_S(\Princ(K)) = \PId(\calO_S)$,
  i.e. the principal divisors map onto the group of non-zero fractional principal ideals of
  $\calO_S$. Now consider the map \[ \Psi : K^* \to \Z^{S \setminus \{ \frakp \}}, \qquad f \mapsto
  (-\nu_\frakp(f))_{\frakp \in S}. \] Fix a divisor~$D$ and the corresponding ideal~$\fraka =
  \ideal_S(D)$; we consider the set of reduced divisors in $\Red_\frakp(K)$ resp. $\Red_S(K)$
  mapping onto ideals equivalent to $\fraka$, i.e. we consider
  \begin{align*}
    \Red_\frakp(K, D) :={} & \{ D' \in \Red_\frakp(K) \mid \ideal_S(D') \PId(\calO_S) = \ideal_S(D)
    \PId(\calO_S) \} \\
    \text{and} \; \Red_S(K, D) :={} & \{ D' \in \Red_S(K) \mid \ideal_S(D') \PId(\calO_S) =
    \ideal_S(D) \PId(\calO_S) \}.
  \end{align*}
  Define the map \[ gen_D : \Red_\frakp(K, D) \to K^* / \calO_S^*, \quad D' \to \mu \calO_S^* \text{
  if } (\tfrac{1}{\mu}) = \ideal_S(D') \ideal_S(D)^{-1}; \] then the combination
  \begin{align*}
    \Phi : \Red_\frakp(K, D) \to{} & \Z^{S \setminus \{ \frakp \}} / \Psi(\calO_S^*), \\
    D' \mapsto{} & \Psi(gen_D(D')) + (\nu_\frakp(D'))_{\frakp \in S \setminus \{ \frakp \}} +
    \Psi(\calO_S^*)
  \end{align*}
  turns out to be a bijection; the subset $\Red_S(K, D)$ of $\Red_\frakp(K, D)$ maps onto a subset
  of the torus $\Z^{S \setminus \{ \frakp \}} / \Psi(\calO_S^*)$. Now the elements of $\Red_S(K, D)$
  correspond to the reduced ideals in the ideal class of $\fraka = \ideal_S(D)$, and the image of
  $\Red_S(K, D)$ under $\Phi$ equals the image of the infrastructure in the ideal class of $\fraka$
  under the distance map. Hence, we see that the \emph{hole elements} in the infrastructure --
  namely, elements of $\Z^S / \Psi(\calO_S^*)$ which lie not in the image of $\Phi$ -- are the
  elements in $\Red_\frakp(K, D) \setminus \Red_S(K, D)$; looking at all ideal classes at once, they
  correspond to the elements in $\Red_\frakp(K) \setminus \Red_S(K)$.
  
  For that reason, the comparison of $\Red_\frakp(K)$ and $\Red_S(K)$ is related to the problem of
  counting the number of holes in the infrastructure.
  
  In case $g = 1$, i.e. $K$ is elliptic, the question can be answered easily:
  
  \begin{proposition}
    \label{genus1prop}
    Assume that $g = 1$. Then \[ Red_\frakp(K) \setminus \Red_S(K) = S_1 \setminus \{ \frakp \}. \]
  \end{proposition}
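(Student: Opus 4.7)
The plan is to apply the earlier Proposition identifying $\Red_\frakp(K)$ with the set~$E$ of degree-one places (via the bijection $\frakq \mapsto \frakq$ for $\frakq \ne \frakp$ and $\frakp \mapsto 0$), and then simply read off which of those reduced divisors fail the additional vanishing conditions defining $\Red_S(K)$.

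More concretely, I would proceed in three short steps. First, note $0 \in \Red_S(K)$ trivially, since $\nu_{\frakp'}(0) = 0$ for every place $\frakp'$, so the zero divisor contributes nothing to $\Red_\frakp(K) \setminus \Red_S(K)$. Second, let $D = \frakq$ be a nonzero reduced divisor, where $\frakq \in E \setminus \{\frakp\}$. By definition of $\Red_S(K)$, we have $D \in \Red_S(K)$ iff $\nu_{\frakp'}(\frakq) = 0$ for all $\frakp' \in S$, which (since $\frakq$ is a prime divisor) is equivalent to $\frakq \notin S$. Thus $D \in \Red_\frakp(K) \setminus \Red_S(K)$ iff $\frakq \in S$ and $\frakq \ne \frakp$. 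Third, the condition ``$\frakq \in S$ and $\deg \frakq = 1$'' is exactly $\frakq \in S_1$, so the exceptional set is $S_1 \setminus \{\frakp\}$, which under the identification of $\Red_\frakp(K)$ with $\{0\} \cup (E \setminus \{\frakp\})$ is the claimed set.

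There is essentially no obstacle here: once the $g=1$ bijection is in hand, the argument is bookkeeping about valuations of a prime divisor. The only thing to be careful about is the slight notational subtlety that in $\Red_\frakp(K)$ we identified $\frakp$ with $0$, so that when we take the set difference with $\Red_S(K)$ it is the image points $\frakq \in E \setminus \{\frakp\}$ (rather than $\frakp$ itself) that can fail the $S$-vanishing, yielding $S_1 \setminus \{\frakp\}$ rather than all of $S_1$.
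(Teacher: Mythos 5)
Your proof is correct and follows the same route as the paper, whose entire proof is the one-line observation that the elements of $\Red_\frakp(K)$ are $0$ and the prime divisors $\frakq$ for $\frakq$ a degree-one place distinct from $\frakp$; the valuation bookkeeping you spell out is exactly what the paper leaves implicit.
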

  
  \begin{proof}
    The elements of $\Red_\frakp(K)$ are $D = 0$ and $D = \frakq$, where $\frakq$ ranges over all
    rational places of $K$ except $\frakp$.
  \end{proof}
  
  \begin{corollary}
    Let $g = 1$ and $k$ be a finite field of $q$ elements. Then \[
    \frac{\abs{\Red_S(K)}}{\abs{\Red_\frakp(K)}} = 1 - \frac{\abs{S_1} - 1}{q} + \calO(q^{-3/2}) \]
    for $q \to \infty$. Hence, the probability that a random reduced divisor $D \in \Red_\frakp(K)$
    is not in $\Red_S(K)$ is approximately $\frac{\abs{S_1} - 1}{q}$.
  \end{corollary}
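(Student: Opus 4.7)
The plan is to reduce the statement to the Hasse--Weil bound for elliptic curves. By Proposition~\ref{genus1prop}, $\Red_\frakp(K) \setminus \Red_S(K)$ is finite of cardinality exactly $\abs{S_1} - 1$, and since $\Red_S(K) \subseteq \Red_\frakp(K)$ this gives the exact identity
\[ \abs{\Red_S(K)} = \abs{\Red_\frakp(K)} - (\abs{S_1} - 1). \]
So the only nontrivial content of the corollary is controlling the denominator $\abs{\Red_\frakp(K)}$.

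Next I would invoke the bijection $\Red_\frakp(K) \to \Pic^0(K)$ from the preceding Proposition, which gives $\abs{\Red_\frakp(K)} = \abs{\Pic^0(K)}$. For an elliptic function field over $\F_q$, the Hasse--Weil bound yields $\bigl|\abs{\Pic^0(K)} - (q+1)\bigr| \le 2\sqrt{q}$, so $\abs{\Red_\frakp(K)} = q + \calO(\sqrt{q})$ as $q \to \infty$. In particular, for $q$ sufficiently large the denominator is nonzero and
\[ \frac{1}{\abs{\Red_\frakp(K)}} = \frac{1}{q}\bigl(1 + \calO(q^{-1/2})\bigr) = \frac{1}{q} + \calO(q^{-3/2}). \]

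Combining these two ingredients, I would write
\[ \frac{\abs{\Red_S(K)}}{\abs{\Red_\frakp(K)}} = 1 - \frac{\abs{S_1} - 1}{\abs{\Red_\frakp(K)}} = 1 - (\abs{S_1} - 1)\!\left(\frac{1}{q} + \calO(q^{-3/2})\right), \]
and the claim follows by absorbing the constant factor $\abs{S_1} - 1$ into the big-$\calO$ (we are in the regime $q \to \infty$ with $S$ fixed). Since the proof is essentially a one-line manipulation once Proposition~\ref{genus1prop} and the Hasse--Weil bound are invoked, I do not anticipate any real obstacle; the only thing to be careful about is stating explicitly that the implicit constant in $\calO(q^{-3/2})$ depends on $\abs{S_1}$ (or equivalently on $S$), which is implicit in the asymptotic ``for $q \to \infty$''.
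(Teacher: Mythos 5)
Your proof is correct and follows essentially the same route as the paper: reduce to the exact identity $\abs{\Red_S(K)} = \abs{\Red_\frakp(K)} - (\abs{S_1}-1)$ via Proposition~\ref{genus1prop}, identify $\abs{\Red_\frakp(K)}$ with $\abs{\Pic^0(K)}$, and apply Hasse--Weil to get $\frac{1}{\abs{\Pic^0(K)}} = \frac{1}{q} + \calO(q^{-3/2})$. The paper merely makes the last step explicit with the bound $\abs{\frac{1}{q-t+1} - \frac{1}{q}} \le q^{-3/2}\frac{2+q^{-1/2}}{1-2q^{-1/2}-q^{-1}}$ for $q \ge 7$; your remark about the implicit constant depending on $\abs{S_1}$ is a fair point of care.
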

  
  \begin{proof}
    By the proposition, \[ \frac{\abs{\Red_S(K)}}{\abs{\Red_\frakp(K)}} = \frac{\abs{\Red_\frakp(K)}
    - (\abs{S_1} - 1)}{\abs{\Red_\frakp(K)}} = 1 - \frac{\abs{S_1} - 1}{\abs{\Pic^0(K)}}. \] Now
    Hasse-Weil gives $\abs{\Pic^0(K)} = q + 1 - t$ with $\abs{t} \le 2 \sqrt{q}$, whence, for $q \ge
    7$, \[ \abs{\frac{1}{q - t + 1} - \frac{1}{q}} \le q^{-3/2} \frac{2 + q^{-1/2}}{1 - 2 q^{-1/2} -
    q^{-1}}, \] which implies the claim.
  \end{proof}
  
  In case $g > 1$, the problem is harder. We begin with another classification of reduced
  divisors. Note that if $\frakq$ is a place of $R$ which is inert in $K$, and if $\frakp$ is a
  place of $K$ lying above $\frakq$, then $\deg \frakp = 2 \deg \frakq \ge 2$; in particular, no
  place of degree~one of $K$ lies above a place of $R$ inert in $K$. Again, let $\sigma$ denote the
  unique non-trivial $R$-automorphism of $K$. One directly obtains the following classification:
  
  \begin{lemma}
    Let $\frakp$ be a place of $K$ of degree~one.
    \begin{enuma}
      \item If $\sigma(\frakp) = \frakp$, then $\frakp \cap R$ ramifies in $K$ and
      $\Conorm_{R/K}(\frakp \cap R) = 2 \frakp$.
      \item If $\sigma(\frakp) \neq \frakp$, then $\frakp \cap R$ splits in $K$ and
      $\Conorm_{R/K}(\frakp \cap R) = \frakp + \sigma(\frakp)$.
    \end{enuma}
    In case $\deg \frakp > 1$, one can also have that $\frakp \cap R$ is inert in $K$ in case
    $\sigma(\frakp) = \frakp$. \qed
  \end{lemma}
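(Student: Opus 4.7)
The plan is to reduce the lemma to the standard trichotomy for the behavior of a place in a degree-two extension, and then rule out the cases forbidden by the assumption $\deg\frakp = 1$. Since $[K:R] = 2$ and $R$ is rational (so $K/R$ is a Galois extension with group $\langle \sigma \rangle$ of order two), for every place $\frakq$ of $R$ exactly one of three things happens: $\frakq$ splits into two distinct places $\frakp_1,\frakp_2$ of degree $\deg\frakq$; or $\frakq$ ramifies with $\Conorm_{K/R}(\frakq) = 2\frakp$ where $\deg \frakp = \deg\frakq$; or $\frakq$ stays inert, with $\Conorm_{K/R}(\frakq) = \frakp$ where $\deg\frakp = 2\deg\frakq$. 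The Galois element $\sigma$ permutes the places of $K$ over $\frakq$, so it swaps $\frakp_1$ and $\frakp_2$ in the split case and fixes the unique place lying above $\frakq$ in both the ramified and the inert cases.

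Applied to $\frakq := \frakp \cap R$, this immediately separates the two cases of the lemma. For (a), if $\sigma(\frakp) = \frakp$, then $\frakq$ is either ramified or inert. The inert possibility forces $\deg\frakp = 2\deg\frakq \ge 2$, contradicting $\deg\frakp = 1$. So $\frakq$ is ramified and $\Conorm_{K/R}(\frakq) = 2\frakp$. For (b), if $\sigma(\frakp) \neq \frakp$, then $\frakq$ cannot be ramified or inert (those cases leave a single fixed place above), hence $\frakq$ splits, and the two places above are precisely $\frakp$ and $\sigma(\frakp)$, giving $\Conorm_{K/R}(\frakq) = \frakp + \sigma(\frakp)$.

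For the closing remark about $\deg\frakp > 1$, one just notes that the argument eliminating the inert case in (a) relied on $\deg\frakp = 1$; once $\deg\frakp \ge 2$ is allowed, a degree-one place $\frakq$ of $R$ that is inert in $K$ gives a single place $\frakp$ of $K$ with $\deg\frakp = 2$ and $\sigma(\frakp) = \frakp$, which realizes the third behavior. So no further work is required beyond pointing out that the only constraint lost is the degree estimate.

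There is no real obstacle here; the lemma is essentially the statement of the Galois trichotomy for degree-two extensions combined with the degree bookkeeping $\deg\frakp = e(\frakp/\frakq) f(\frakp/\frakq) \deg\frakq$. The only point that deserves a sentence is the claim that $\sigma$ fixes the unique place above $\frakq$ in both the ramified and the inert cases, which follows because $\sigma$ acts transitively on the set of places above $\frakq$ and that set is a singleton in those cases.
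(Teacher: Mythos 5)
Your proof is correct and is exactly the standard argument the paper has in mind: the paper states this lemma without proof (it is introduced with ``One directly obtains the following classification'' and closed with a \qed), and the intended justification is precisely the Galois trichotomy for the degree-two extension $K/R$ together with the degree identity $\deg\frakp = e(\frakp/\frakq) f(\frakp/\frakq) \deg\frakq$ and transitivity of $\langle\sigma\rangle$ on the places above $\frakq$. Your write-up fills in the omitted details correctly, including the elimination of the inert case when $\deg\frakp = 1$ and its reappearance when $\deg\frakp \ge 2$.
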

  
  We can now reformulate our explicit description of the reduced divisors using signatures:
  
  \begin{proposition}
    \label{combinatoricaldescriptionofreds}
    Let $D \in \Div(K)$. Then $D \in \Red_S(K)$ if, and only if,
    \begin{enumi}
      \item $\deg D \le g$;
      \item $\nu_\frakq(D) = 0$ for all $\frakq \in S$ and
      \item for all places $\frakq$ of $R$, \[ \{ \nu_\frakp(D) \mid \frakp \cap R = \frakq \}
      \in \begin{cases} \{ \{ 0 \} \} & \text{if } \frakq \text{ is inert in } K, \\ \{ \{ 0 \}, \{
        1 \} \} & \text{if } \frakq \text{ ramifies in } K, \\ \{ \{ 0, 0 \}, \{ 0, 1 \}, \dots, \{
        0, g \} \} & \text{if } \frakq \text{ splits in } K. \end{cases} \]
    \end{enumi}
  \end{proposition}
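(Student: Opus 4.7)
The plan is to deduce Proposition~\ref{combinatoricaldescriptionofreds} directly from Theorem~\ref{reducedtheorem} by rewriting the abstract conorm hypothesis as local constraints on the valuations $\nu_\frakp(D)$. Concretely, I would show that the condition ``for every $D' \in \Div(R)$ with $0 \le \Conorm_{K/R}(D') \le D$ one has $D' \le 0$'' which appears in the theorem is equivalent, for $D \ge 0$, to the statement ``no place $\frakq$ of $R$ satisfies $\Conorm_{K/R}(\frakq) \le D$''; combined with the trichotomy in the preceding lemma, this forbidden-place condition unpacks into exactly the enumeration in~(iii).

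The first step is the observation $\Conorm_{K/R}(D') \ge 0 \Rightarrow D' \ge 0$: if $\nu_\frakq(D') < 0$ at some place $\frakq$ of $R$, then at any $\frakp$ above $\frakq$ the valuation $\nu_\frakp(\Conorm_{K/R}(D')) = e(\frakp/\frakq) \nu_\frakq(D')$ is also negative. So in Theorem~\ref{reducedtheorem} one may replace ``$D' \le 0$'' by ``$D' = 0$''. The second step is the reduction to single places: since $\Conorm_{K/R}$ is a monoid homomorphism on effective divisors and any effective $D'$ is a nonnegative integer combination of places, the failure of the condition is equivalent to the existence of a single place $\frakq$ of $R$ with $\Conorm_{K/R}(\frakq) \le D$. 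The third step uses the shape of $\Conorm_{K/R}(\frakq)$ ($\frakp$, $2\frakp$, or $\frakp_1 + \frakp_2$ according as $\frakq$ is inert, ramifies, or splits in $K$) to translate the inequality $\Conorm_{K/R}(\frakq) \le D$ into $\nu_\frakp(D) \ge 1$, $\nu_\frakp(D) \ge 2$, or $\min(\nu_{\frakp_1}(D), \nu_{\frakp_2}(D)) \ge 1$ respectively; the negation of all three across all $\frakq$ is precisely~(iii).

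With this dictionary, both directions follow in parallel. For ($\Rightarrow$), if $D \in \Red_S(K)$, then the degree bound and $\nu_\frakp(D) = 0$ come from the Lemma in Section~\ref{arithhypersection} and the definition of $\Red_\frakp(K)$, the additional vanishing in~(ii) comes from the definition of $\Red_S(K)$, and~(iii) comes from Theorem~\ref{reducedtheorem} via the translation above. For~($\Leftarrow$), condition~(iii) already forces $D \ge 0$ (every place of $K$ lies above some place of $R$, and the multisets listed contain only nonnegative integers), condition~(i) supplies the degree bound, condition~(ii) gives $\nu_\frakp(D) = 0$, and~(iii) yields the conorm condition, so Theorem~\ref{reducedtheorem} produces $D \in \Red_\frakp(K)$; the remaining vanishings in~(ii) upgrade this to $D \in \Red_S(K)$.

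I do not foresee a genuine obstacle, since the argument is essentially a mechanical unpacking of Theorem~\ref{reducedtheorem} combined with the classification of decomposition behaviour in the quadratic extension $K/R$. The only minor bookkeeping point is the case $g = 1$, where Theorem~\ref{reducedtheorem} was not stated; here one verifies~(iii) directly against the explicit list of reduced divisors in Proposition~\ref{genus1prop} (namely $0$ and single rational places distinct from $\frakp$), which produces exactly the same combinatorial characterization.
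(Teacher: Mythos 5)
Your proposal is correct and follows essentially the same route as the paper: the paper's proof likewise observes that the conorm condition is local (it suffices to test $D' = \frakq$ for single places $\frakq$ of $R$) and then translates $\Conorm_{K/R}(\frakq) \not\le D$ through the three decomposition types, using $\deg D \le g$ to cap the split case. Your explicit justification of the locality step and your separate treatment of $g = 1$ only make the argument more complete than the paper's terse version.
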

  
  \begin{proof}
    Clearly, the condition that $D' \in \Div(R)$ with $\Conorm_{K/R}(D') \le D$ implies $D' = 0$ is
    local, i.e. it suffices to check it for $D' = \frakq$ for all places~$\frakq$ of $R$.
    
    Let $\frakq$ be a place of $R$ and let $\frakp_1, \frakp_2$ be all places of $K$ lying above
    $\frakq$ (with $\frakp_1 = \frakp_2$ possible). If $\signature(\frakq) = (1, 2)$, then
    $\Conorm_{K/R}(\frakq) = \frakp_1 = \frakp_2$. If $\signature(\frakq) = (2, 1)$, then
    $\Conorm_{K/R}(\frakq) = 2 \frakp_1 = 2 \frakp_2$. If $\signature(\frakq) = (1, 1, 1, 1)$, then
    $\Conorm_{K/R}(\frakq) = \frakp_1 + \frakp_2$. This, together with $\deg D \le g$, shows that
    the above listed possibilities for $\{ \nu_{\frakp_1}(D), \nu_{\frakp_2}(D) \}$ correspond to
    the cases where $\Conorm_{K/R}(\frakq) \not\le D$.
  \end{proof}
  
  We have seen how the hole elements of an infrastructure correspond to the set $\Red_S(K) \setminus
  \Red_\frakp(K)$, and we obtained an explicit combinatorial description of $\Red_S(K)$. This will
  be used in the next sections to obtain information on $\abs{\Red_S(K) \setminus
  \Red_\frakp(K)}$. Finally, we have investigated the case of $g = 1$ and shown that in this case
  our main result is true.
  
  \section{Generating Functions for $\abs{\Red_S(K)}$}
  \label{genfuns}
  
  Let $S$ be an arbitrary set of places of $K$. We want to consider the subset of divisors of
  $\Red_S(K)$ of a fixed degree, and describe their quantity using a generating function. Since this
  does not make that much sense for finite sequences, we extend the definition of $\Red_S(K)$ to
  include divisors of higher degree; these additional elements are not relevant for computational
  reasons, but allow to relate the so obtained generating functions of $\Red_S(K)$ with the zeta
  function of $K$.
  
  We begin with defining a filtration $\Red_S(K) = \bigcup_{d=0}^g \Red_S^d(K)$, where $\Red_S^d(K)$
  contains the reduced divisors in $\Red_S(K)$ of degree~$d$, and extending $\Red_S^d(K)$ for $d >
  g$. For that, define \[ \Red_S^d(K) := \left\{ D \in \Div(K) \;\middle| \begin{matrix} D \ge 0, \;
    \deg D = d, \; \forall \frakq \in S : \nu_\frakp(D) = 0 \hfill \\ \forall D' \in \Div(R) :
    (\Conorm_{K/R}(D') \le D \Rightarrow D' \le 0)) \end{matrix} \right\} \] for any $d \in
  \N$. Then, the classification of Proposition~\ref{combinatoricaldescriptionofreds} also holds: 
  
  \begin{proposition}
    \label{combinatoricaldescriptionofreds2}
    Let $D \in \Div(K)$. Then $D \in \Red_S^d(K)$ if, and only if,
    \begin{enumi}
      \item $\deg D = d$;
      \item $\nu_\frakq(D) = 0$ for all $\frakq \in S$ and
      \item for all places $\frakq$ of $R$, \[ \{ \nu_\frakp(D) \mid \frakp \cap R = \frakq \}
      \in \begin{cases} \{ \{ 0 \} \} & \text{if } \frakq \text{ is inert in } K, \\ \{ \{ 0 \}, \{
        1 \} \} & \text{if } \frakq \text{ ramifies in } K, \\ \{ \{ 0, 0 \}, \{ 0, 1 \}, \dots, \{
        0, g \} \} & \text{if } \frakq \text{ splits in } K. \end{cases} \] \qed
    \end{enumi}
  \end{proposition}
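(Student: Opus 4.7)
My plan is to adapt the proof of Proposition~\ref{combinatoricaldescriptionofreds} essentially verbatim. The definition of $\Red_S^d(K)$ differs from that of $\Red_S(K)$ only in that the inequality $\deg D \le g$ is replaced by the equality $\deg D = d$; the $S$-support clause and the conorm clause are identical in both definitions. The local case analysis of the earlier proof never made essential use of the bound $\deg D \le g$ beyond constraining the multiset of valuations at a split place, so the same analysis applies, producing the same trichotomy.

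First I would reduce the conorm condition "$\forall D' \in \Div(R) : \Conorm_{K/R}(D') \le D \Rightarrow D' \le 0$" to a per-place condition, exactly as in the earlier proof. Writing any $D' \ge 0$ as $\sum_\frakq c_\frakq \frakq$ and using the additivity and positivity of $\Conorm_{K/R}$, the condition holds for all such $D'$ if and only if $\Conorm_{K/R}(\frakq) \not\le D$ for every place $\frakq$ of $R$; the cases $c_\frakq \ge 2$ reduce to $c_\frakq = 1$ since $c\, \Conorm_{K/R}(\frakq) \le D$ already forces $\Conorm_{K/R}(\frakq) \le D$. Applying the lemma immediately preceding Proposition~\ref{combinatoricaldescriptionofreds} to evaluate $\Conorm_{K/R}(\frakq)$ as $\frakp$, $2\frakp$, or $\frakp_1 + \frakp_2$ in the inert, ramified, and split cases respectively, the non-inclusion $\Conorm_{K/R}(\frakq) \not\le D$ translates into the valuation constraints recorded in~(iii). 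Combining the per-place analysis with the global degree condition~(i) and the $S$-support condition~(ii) then packages the "only if" and "if" directions.

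I do not expect a genuine obstacle, since the argument is a mechanical transcription of the previous proof. The only mild subtlety is the split-place case, where one must note that the condition produced by the per-place analysis pins down the multiset $\{ \nu_{\frakp_1}(D), \nu_{\frakp_2}(D) \}$ in the form recorded in the proposition; this is exactly the same bookkeeping that appears at the end of the proof of Proposition~\ref{combinatoricaldescriptionofreds}, now read with the equality $\deg D = d$ in place of the inequality $\deg D \le g$.
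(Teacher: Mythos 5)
Your plan coincides with the paper's: Proposition~\ref{combinatoricaldescriptionofreds2} is stated with no proof at all, the preceding sentence merely asserting that the classification of Proposition~\ref{combinatoricaldescriptionofreds} ``also holds,'' so a verbatim transcription of the earlier local argument is exactly what is intended, and for $d \le g$ your transcription is complete and correct.

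However, the one place where you declare the adaptation harmless is the one place where it is not. The cap $g$ on the split-place multiset in condition~(iii) was derived in the earlier proof from the hypothesis $\deg D \le g$; the sets $\Red_S^d(K)$ are defined for every $d \in \N$, and for $d > g$ --- which is the entire point of the extension, since these are the coefficients that make $h_S(t)$ an honest power series and feed into Proposition~\ref{SSprimerelation}(a), where arbitrarily large multiples of a split place are adjoined --- the same analysis yields a cap of $d$, not $g$. Concretely, if $\frakp'$ is a degree-one place of $K$ outside $S$ lying over a place of $R$ that splits, then $D = d\frakp'$ satisfies every defining condition of $\Red_S^d(K)$, yet its multiset at that place is $\{0, d\}$, which for $d > g$ is not among $\{0,0\}, \{0,1\}, \dots, \{0,g\}$. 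So the ``only if'' direction of~(iii) fails as literally stated, and your claim that ``the same analysis applies, producing the same trichotomy'' is precisely the step that breaks. The correct local condition at a split place is simply that at least one of the two valuations vanishes (equivalently, replace the cap $g$ by $d$); this coincides with the stated list only when $d \le g$. This defect is inherited from the statement in the paper rather than introduced by you, but a proof that aims to establish the proposition as written must either restrict to $d \le g$ or repair the split case.
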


  Moreover, in the case that $\frakp \in S$, we have the disjoint union \[ \Red_S(K) =
  \bigcup_{d=0}^g \Red_S^d(K). \] Consider $C_n(S) := \abs{\Red_S^n(K)}$; we are interested in the
  generating function \[ h_S(t) := \sum_{n=0}^\infty C_n(S) t^n \in \Q[[t]] \] and its relation to
  $h_\emptyset(t)$. We begin with a statement on the relation of $\Red_S^n(K)$ if we modify $S$ in
  certain ways.
  
  \begin{proposition}
    \label{SSprimerelation}
    Let $S' = S \cup \{ \frakp_1, \frakp_2 \}$ with $\frakp_i \not\in S$, and let $d \in \N$.
    \begin{enuma}
      \item Assume that $\frakp_1 \neq \frakp_2 = \sigma(\frakp_1)$. Then we have the disjoint
      union \[ \Red_S^d(K) = \bigcup_{i=0}^\infty \{ D + i \frakp_1, D + i \frakp_2 \mid D \in
      \Red_{S'}^{d - i \deg \frakp_1}(K) \}. \]
      \item Assume that $\frakp_1 = \frakp_2 = \sigma(\frakp_1)$, and that $\frakp_1 \cap R$ is not
      inert in $K$. Then we have the disjoint union \[ \Red_S^d(K) = \bigcup_{i=0}^1 \{ D + i
      \frakp_1 \mid D \in \Red_{S'}^{d - i \deg \frakp_1}(K) \}. \]
      \item Assume that $\frakp_1 = \frakp_2 = \sigma(\frakp_1)$, and that $\frakp_1 \cap R$ is
      inert in $K$. Then we have \[ \Red_S^d(K) = \Red_{S'}^d(K). \]
      \item Assume that $\frakp_1 = \frakp_2$ and $\sigma(\frakp_1) \in S$. Then we have the
      disjoint union \[ \Red_S^d(K) = \bigcup_{i=0}^\infty \{ D + i \frakp_1 \mid D \in \Red_{S'}^{d
      - i \deg \frakp_1}(K) \}. \]
    \end{enuma}
  \end{proposition}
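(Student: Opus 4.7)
The plan is to invoke the combinatorial description of Proposition~\ref{combinatoricaldescriptionofreds2} and peel off the $\frakp_1,\frakp_2$-components of a reduced divisor. The key observation is that the reducedness conditions defining $\Red_S^d(K)$ are local at places of $R$, so the only difference between the defining conditions for $S$ and for $S' = S \cup \{\frakp_1,\frakp_2\}$ consists of the two new constraints $\nu_{\frakp_1}(D) = \nu_{\frakp_2}(D) = 0$. Hence I would partition $\Red_S^d(K)$ according to the allowed valuations at $\frakp_1, \frakp_2$ permitted by the local classification at $\frakq := \frakp_j \cap R$, and identify each stratum with a suitable $\Red_{S'}^{d'}(K)$ by subtracting the $\frakp_j$-part.

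For case~(a), the place $\frakq$ splits in~$K$ and the classification forces the multiset $\{\nu_{\frakp_1}(D), \nu_{\frakp_2}(D)\}$ to equal $\{0, i\}$ for some $i \ge 0$. Writing $D = D_0 + i \frakp_j$ where $j$ is the index attaining the maximum (either index when $i = 0$), the divisor $D_0$ satisfies all defining conditions of $\Red_{S'}^{d - i \deg \frakp_1}(K)$, because the only local condition altered is the one at $\frakq$, which for $D_0$ becomes the trivial $\{0, 0\}$. Conversely, every $D_0 \in \Red_{S'}^{d - i \deg \frakp_1}(K)$ yields a legitimate element of $\Red_S^d(K)$, since adding $i \frakp_j$ restores the allowed pattern $\{0, i\}$ at $\frakq$. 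Disjointness is immediate: different $i$ yield different total degrees contributed at $\frakp_1 + \frakp_2$, while for fixed $i > 0$ the two divisors $D_0 + i \frakp_1$ and $D_0 + i \frakp_2$ are distinguished by which of $\frakp_1, \frakp_2$ carries the positive valuation.

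Cases~(b), (c), (d) proceed along exactly the same lines, the only change being the range of $i$ dictated by the local classification at $\frakq$: in~(b), $\frakq$ ramifies so $i \in \{0, 1\}$; in~(c), $\frakq$ is inert, so $\nu_{\frakp_1}(D) = 0$ is already forced for every $D \in \Red_S^d(K)$ and the two sets coincide on the nose; in~(d), the assumption $\sigma(\frakp_1) \in S$ makes $\nu_{\sigma(\frakp_1)}(D) = 0$ automatic, collapsing the split condition $\min(\nu_{\frakp_1}, \nu_{\sigma(\frakp_1)}) = 0$ to no constraint at all on $\nu_{\frakp_1}(D)$, so $i$ may be any non-negative integer.

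There is no genuine obstacle here; the proposition is essentially the bookkeeping corollary of Proposition~\ref{combinatoricaldescriptionofreds2}. The one detail to handle with care is the $i = 0$ case of~(a), where the set $\{D_0 + 0 \cdot \frakp_1, D_0 + 0 \cdot \frakp_2\}$ collapses to the singleton $\{D_0\}$; this is consistent with the statement, since the set-theoretic braces prevent double-counting and so the $i = 0$ fiber contributes exactly one copy of $\Red_{S'}^d(K)$ rather than two.
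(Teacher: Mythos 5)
Your proof is correct and follows exactly the route the paper takes: the paper's own proof is the one-line remark that the claim ``is clear from the generalization of Proposition~\ref{combinatoricaldescriptionofreds},'' i.e.\ from the local multiset classification, which is precisely the bookkeeping you carry out in detail (including the correct handling of disjointness and of the collapsed $i=0$ fiber in case~(a)).
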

  
  \begin{proof}
    This is clear from the generalization of Proposition~\ref{combinatoricaldescriptionofreds}.
  \end{proof}
  
  This allows us to state how to obtain $h_S(t)$ from $h_{S'}(t)$ in these cases:
  
  \begin{theorem}
    Let $S' = S \cup \{ \frakp_1, \frakp_2 \}$ with $\frakp_i \not\in S$.
    \begin{enuma}
      \item Assume that $\frakp_1 \neq \frakp_2 = \sigma(\frakp_1)$. Then \[ h_{S'}(t) = \frac{1 -
      t^{\deg \frakp_1}}{1 + t^{\deg \frakp_1}} h_S(t). \]
      \item Assume that $\frakp_1 = \frakp_2 = \sigma(\frakp_1)$, and that $\frakp_1 \cap R$ is not
      inert in $K$. Then \[ h_{S'}(t) = \frac{1}{1 + t^{\deg \frakp_1}} h_S(t). \]
      \item Assume that $\frakp_1 = \frakp_2 = \sigma(\frakp_1)$, and that $\frakp_1 \cap R$ is
      inert in $K$. Then \[ h_{S'}(t) = h_S(t). \]
      \item Assume that $\frakp_1 = \frakp_2$ and $\sigma(\frakp_1) \in S$. Then \[ h_{S'}(t) = (1 -
      t^{\deg \frakp_1}) h_S(t). \]
    \end{enuma}
  \end{theorem}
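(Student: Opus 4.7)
The plan is to derive each of the four identities as a direct consequence of Proposition~\ref{SSprimerelation} by taking cardinalities of the disjoint unions stated there and packaging the results into power series. Throughout, write $e := \deg \frakp_1 = \deg \frakp_2$ and let $C_d(\cdot) := \abs{\Red^d_{(\cdot)}(K)}$, so that $h_{(\cdot)}(t) = \sum_{d \ge 0} C_d(\cdot)\,t^d$.

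In case (a), the disjoint union gives $C_d(S) = C_d(S') + 2\sum_{i=1}^{\infty} C_{d - ie}(S')$: for $i \ge 1$ the divisors $D + i\frakp_1$ and $D + i\frakp_2$ are distinct since $\frakp_1 \neq \frakp_2$, while at $i = 0$ they coincide, which accounts for the factor $2$ appearing only on the tail. Multiplying by $t^d$ and summing yields
\[
   h_S(t) \;=\; h_{S'}(t) + \frac{2\,t^e}{1 - t^e}\, h_{S'}(t) \;=\; \frac{1 + t^e}{1 - t^e}\, h_{S'}(t),
\]
which inverts to the stated formula. Before doing this I would check, in one sentence, that the unions in Proposition~\ref{SSprimerelation}(a) are genuinely disjoint across varying $(D, i)$: reading $\nu_{\frakp_1}$ and $\nu_{\frakp_2}$ off an element of the union recovers both $i$ and the summand $D \in \Red^{d-ie}_{S'}(K)$ uniquely.

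Cases (b), (c) and (d) are handled by the same recipe and are simpler. Case (b) yields $C_d(S) = C_d(S') + C_{d-e}(S')$, so $h_S(t) = (1 + t^e)\,h_{S'}(t)$. Case (c) is trivial: the sets coincide, hence the generating functions do. In case (d), the parameter $i$ ranges over all of $\N$ with each $D + i\frakp_1$ appearing exactly once, giving $h_S(t) = \sum_{i \ge 0} t^{ie}\,h_{S'}(t) = \tfrac{1}{1 - t^e}\, h_{S'}(t)$. In each case, solving for $h_{S'}(t)$ produces the claim.

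I do not anticipate any serious obstacle, since the theorem is essentially a bookkeeping translation of Proposition~\ref{SSprimerelation}. The only point that deserves a line of care is the $i = 0$ versus $i \ge 1$ asymmetry in case (a), which produces the numerator $1 + t^e$ rather than $1 - t^e$ and accounts for the qualitative difference between the split and ramified/inert formulas.
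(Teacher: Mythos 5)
Your proposal is correct and follows essentially the same route as the paper: extract the coefficient recurrences $C_d(S)$ in terms of the $C_{d'}(S')$ from Proposition~\ref{SSprimerelation}, sum them into generating-function identities, and use the geometric series to solve for $h_{S'}(t)$. The added remark on disjointness of the unions in case (a) is a harmless (and reasonable) extra check.
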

  
  \begin{proof}
    Using the proposition, we obtain
    \begin{align*}
      \text{(a)} \qquad & C_d(S) = C_d(S') + 2 \sum_{i=1}^\infty C_{d - i \deg \frakp_1}(S'), \\
      \text{(b)} \qquad & C_d(S) = C_d(S') + C_{d - \deg \frakp_1}(S'), \\
      \text{(c)} \qquad & C_d(S) = C_d(S') \qquad \text{and} \\
      \text{(d)} \qquad & C_d(S) = \sum_{i=0}^\infty C_{d - i \deg \frakp_1}(S').
    \end{align*}
    Therefore,
    \begin{align*}
      \text{(a)} \qquad & h_S(t) = h_{S'}(t) \left( 1 + 2 \sum_{i=1}^\infty t^{i \deg
      \frakp_1} \right), \\
      \text{(b)} \qquad & h_S(t) = h_{S'}(t) (1 + t^{\deg \frakp_i}) \\
      \text{(c)} \qquad & h_S(t) = h_{S'}(t) \qquad \text{and} \\
      \text{(d)} \qquad & h_S(t) = h_{S'}(t) \sum_{i=0}^\infty t^{i \deg \frakp_1}.
    \end{align*}
    Now, the geometric series gives \[ \sum_{i=0}^\infty t^{i \deg \frakp_1} = \frac{1}{1 - t^{\deg
    \frakp_1}}, \] and multiplying with $1 + t^{\deg \frakp_1}$ gives \[ 1 + 2 \sum_{i=1}^\infty
    t^{i \deg \frakp_1} = \frac{1 + t^{\deg \frakp_1}}{1 - t^{\deg \frakp_1}}. \] Plugging this in
    and solving for $h_{S'}(t)$, we obtain the claim.
  \end{proof}
  
  With this, we can explicitly describe $h_S(t)$ in terms of $h_\emptyset(t)$ when $S$ is a finite
  set not containing places lying above inert places of $R$.
  
  \begin{corollary}
    \label{hSexplicitdescription}
    Let $S$ be a finite set of places of $K$, containing no places lying above inert places of
    $R$. For $i \in \N$, let $S_i = \{ \frakp \in S \mid \deg \frakp = i \}$, and let
    \begin{itemize}
      \item $n_i = \abs{\{ \frakp \in S_i \mid \sigma(\frakp) = \frakp \}}$,
      \item $\ell_i = \frac{1}{2} \abs{\{ \frakp \in S_i \mid \frakp \neq \sigma(\frakp) \in S_i
      \}}$,
      \item $m_i = \abs{\{ \frakp \in S_i \mid \sigma(\frakp) \not\in S \}}$.
    \end{itemize}
    Then \[ h_S(t) = h_\emptyset(t) \cdot \prod_{i=1}^\infty \left( (1 - t^i)^{\ell_i} (1 +
    t^i)^{-\ell_i - n_i - m_i} \right). \]
  \end{corollary}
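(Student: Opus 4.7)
The plan is to induct on $\abs{S}$, peeling off a single place $\frakp \in S$ at each step and using the preceding theorem to relate $h_S(t)$ to the generating function of a strictly smaller set. The base case $S = \emptyset$ is immediate: all counts $n_i, \ell_i, m_i$ vanish, the infinite product is empty, and both sides reduce to $h_\emptyset(t)$.

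For the inductive step, I fix any $\frakp \in S$, write $i = \deg \frakp$, and split into three cases according to how $\frakp$ meets its $\sigma$-orbit inside $S$. In case (i), $\sigma(\frakp) = \frakp$; since $\frakp \cap R$ is not inert by assumption, part~(b) of the preceding theorem applies to $\bar S := S \setminus \{\frakp\}$ and yields $h_S(t) = (1 + t^i)^{-1} h_{\bar S}(t)$. In case (ii), $\sigma(\frakp) \in S \setminus \{\frakp\}$, and part~(a) applied to $\bar S := S \setminus \{\frakp, \sigma(\frakp)\}$ yields $h_S(t) = \frac{1 - t^i}{1 + t^i} h_{\bar S}(t)$. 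In each of these two cases I would then compare the triples $(n_j, \ell_j, m_j)$ for $\bar S$ and for $S$: only the $j = i$ entry changes, decreasing by one in $n_i$ in case (i) and in $\ell_i$ in case (ii), so the induction hypothesis applied to $\bar S$, multiplied by the factor furnished by the theorem, reproduces the product formula exactly.

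The step requiring genuine thought is case (iii), when $\sigma(\frakp) \neq \frakp$ and $\sigma(\frakp) \notin S$, because removing the single place $\frakp$ is not directly inverse to any one of parts~(a)--(d) of the theorem. The plan is a two-step detour: set $\bar S := S \setminus \{\frakp\}$ and $S' := S \cup \{\sigma(\frakp)\} = \bar S \cup \{\frakp, \sigma(\frakp)\}$. Part~(a) of the theorem, applied to $\bar S$, gives $h_{S'}(t) = \frac{1 - t^i}{1 + t^i} h_{\bar S}(t)$; part~(d), applied to $S$ by adjoining $\sigma(\frakp)$ to the side on which $\frakp$ already lies, gives $h_{S'}(t) = (1 - t^i) h_S(t)$. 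Eliminating $h_{S'}(t)$ yields $h_S(t) = (1 + t^i)^{-1} h_{\bar S}(t)$. Since the only triple entry that changes between $S$ and $\bar S$ is $m_i$, which drops by one, the induction hypothesis for $\bar S$ combined with this $(1+t^i)^{-1}$ factor once more matches the target formula.

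The main obstacle is identifying the case-(iii) trick; the remaining work is just bookkeeping on the exponents of $1 - t^i$ and $1 + t^i$ to confirm that the three multiplicative adjustments above precisely account for the changes in $\ell_i$, $n_i$, and $m_i$ when one place is removed from $S$.
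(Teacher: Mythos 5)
Your proof is correct, and it uses the same ingredients as the paper's, just organized differently. The paper's own proof is a global, one-shot version of your case-(iii) detour: it sets $S' = S \cup \sigma(S)$ all at once, computes $h_{S'}(t)$ from $h_\emptyset(t)$ via parts (a) and (b) of the theorem (since $S'$ is $\sigma$-stable, the counts for $S'$ become $n'_i = n_i$, $\ell'_i = \ell_i + m_i$, $m'_i = 0$), then obtains $h_{S'}(t) = h_S(t)\cdot\prod_{i}(1-t^i)^{m_i}$ by applying part (d) once for each place of $S' \setminus S$, and divides. The key observation in your case (iii) --- that removing a single unpaired place is the composite of the inverse of part (a) with part (d), passing through the intermediate set $S \cup \{\sigma(\frakp)\}$ --- is exactly the paper's symmetrization trick restricted to a single $\sigma$-orbit. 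Your induction on $\abs{S}$ makes the exponent bookkeeping more transparent (each step changes exactly one of $n_i$, $\ell_i$, $m_i$ by one, and you correctly note that removing a place or a conjugate pair cannot alter the classification of any remaining place), at the cost of a three-way case split; the paper gets the whole product in two multiplicative steps. Both arguments are complete.
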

  
  \begin{proof}
    Let $S' = S \cup \sigma(S)$. If $S'_i$ and $n'_i, \ell'_i, m'_i$ are defined in a similar
    manner, than $n'_i = n_i$, $\ell'_i = \ell_i + m_i$, $m'_i = 0$. Using the Theorem, we obtain
    \begin{align*}
      h_{S'}(t) ={} & h_\emptyset(t) \cdot \prod_{i=1}^\infty \left( \frac{1 - t^i}{1 + t^i}
      \right)^{\ell'_i} \cdot \prod_{i=1}^\infty \left( \frac{1}{1 + t^i} \right)^{n'_i} \\
      {}={} & h_\emptyset(t) \cdot \prod_{i=1}^\infty \left( (1 - t^i)^{\ell'_i} (1 + t^i)^{-\ell'_i
      - n'_i} \right) \\
      {}={} & h_\emptyset(t) \cdot \prod_{i=1}^\infty \left( (1 - t^i)^{\ell_i + m_i} (1 +
      t^i)^{-\ell_i - m_i - n_i} \right).
    \end{align*}
    Using the Theorem a second time and the fact that \[ S' \setminus S = \{ \sigma(\frakp) \in S
    \mid \sigma(\frakp) \not\in S \}, \] we get \[ h_{S'}(t) = h_S(t) \cdot \prod_{i=1}^\infty (1 -
    t^i)^{m_i}. \] Putting everything together, we have \[ h_S(t) = h_{S'}(t) \prod_{i=1}^\infty (1
    - t_i)^{-m_i} = h_\emptyset(t) \cdot \prod_{i=1}^\infty \left( (1 - t^i)^{\ell_i} (1 +
    t^i)^{-\ell_i - m_i - n_i} \right), \] what we had to show.
  \end{proof}
  
  Next, we want to find bounds for the coefficients of the Taylor expansion of the rational
  functions involved in describing the relation of $h_S(t)$ to $h_\emptyset(t)$. For that, we need a
  small lemma on formal power series.
  
  \begin{lemma}
    Let $f = \sum_{n=0}^\infty a_n t^n, g = \sum_{n=0}^\infty b_n t^n \in \C[[t]]$ and $a, b \in
    \R_{\ge 0}$ such that $\abs{a_n} \le a^n$, $\abs{b_n} \le b^n$. If $f g = \sum_{n=0}^\infty
    c_n$, then $\abs{c_n} \le (a + b)^n$ for all $n \in \N$. In case $a_0 = b_0 = 1$, we have $c_0 = 1$ and $c_1 = a_1 + b_1$.
  \end{lemma}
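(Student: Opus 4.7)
The plan is to apply the Cauchy product formula directly and then control the resulting sum by the binomial theorem. Writing $c_n = \sum_{k=0}^n a_k b_{n-k}$, the triangle inequality together with the hypotheses gives
\[ \abs{c_n} \le \sum_{k=0}^n \abs{a_k} \abs{b_{n-k}} \le \sum_{k=0}^n a^k b^{n-k}. \]
So the entire question reduces to comparing $\sum_{k=0}^n a^k b^{n-k}$ with $(a+b)^n$.

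For this comparison I would invoke the binomial expansion $(a+b)^n = \sum_{k=0}^n \binom{n}{k} a^k b^{n-k}$. Since $a, b \ge 0$ every term on the right is nonnegative, and since $\binom{n}{k} \ge 1$ for all $0 \le k \le n$, the inequality $\sum_{k=0}^n a^k b^{n-k} \le (a+b)^n$ is immediate. Combining this with the previous display yields $\abs{c_n} \le (a+b)^n$, which is the first claim.

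For the second claim, the Cauchy product gives $c_0 = a_0 b_0$ and $c_1 = a_0 b_1 + a_1 b_0$, so the normalization $a_0 = b_0 = 1$ produces $c_0 = 1$ and $c_1 = a_1 + b_1$ without further work.

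There is really no obstacle here: the only small point one has to notice is that the naive bound coming from the triangle inequality is $\sum a^k b^{n-k}$, which is missing the binomial coefficients of $(a+b)^n$, but this goes in the right direction because $\binom{n}{k} \ge 1$. The estimate is therefore not tight, but it is exactly what is needed for the subsequent coefficient bounds on the rational factors in Corollary~\ref{hSexplicitdescription}.
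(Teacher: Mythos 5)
Your proof is correct and follows essentially the same route as the paper: the Cauchy product formula, the triangle inequality, and the comparison $\sum_{k=0}^n a^k b^{n-k} \le \sum_{k=0}^n \binom{n}{k} a^k b^{n-k} = (a+b)^n$ using $\binom{n}{k} \ge 1$ and $a, b \ge 0$, with the second claim read off directly from $c_0 = a_0 b_0$ and $c_1 = a_0 b_1 + a_1 b_0$. The only trivial slip is in your closing aside: the lemma is used in Corollary~\ref{coefficientboundcorollary}, not Corollary~\ref{hSexplicitdescription}.
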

  
  \begin{proof}
    We have $c_n = \sum_{i=0}^n a_i b_{n-i}$, whence \[ \abs{c_n} \le \sum_{i=0}^n \abs{a_i}
    \abs{b_{n-i}} \le \sum_{i=0}^n \binom{n}{i} a^i b^{n-i} = (a + b)^n. \] Finally, $a_0 = b_0 = 1$
    clearly implies $c_0 = 1$ and $c_1 = a_1 + b_1$.
  \end{proof}
  
  
  
  
  
  Using this, we obtain how two generating functions $h_S(t)$ and $h_{S'}(t)$ differ in case $S'
  \subseteq S$:
  
  \begin{corollary}
    \label{coefficientboundcorollary}
    Assume that $S$ is finite and contains no places lying above inert places of $R$, and let $S'
    \subseteq S$ be a subset. Let \[ h_S(t) = h_{S'}(t) \sum_{n=0}^\infty a_n t^n. \] Then $a_0 =
    1$, $a_1 = -(\abs{S_1} - \abs{S'_1})$ and $\abs{a_n} \le (\abs{S} - \abs{S'})^n$ for $n \in \N$.
  \end{corollary}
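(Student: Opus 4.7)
The plan is to express $h_S(t)/h_{S'}(t)$ as a finite product of simple rational factors obtained by successive applications of the previous Theorem, and then invoke the formal power series lemma to control its coefficients.

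First I would partition $S \setminus S'$ into three disjoint pieces: the split pairs $\{\frakp,\sigma(\frakp)\}$ both contained in $S \setminus S'$; the ramified places $\frakp \in S \setminus S'$ with $\sigma(\frakp) = \frakp$; and the remaining ``single'' split places $\frakp$ whose conjugate $\sigma(\frakp)$ already lies in $S'$. Since $S$ contains no places above inert places of $R$, case (c) of the Theorem never arises. I then add the places of $S \setminus S'$ to $S'$ in this order -- pairs first, then ramifieds, then singles -- so that each stage falls under exactly one of cases (a), (b), (d). This expresses $h_S(t)/h_{S'}(t)$ as a finite product of factors of the form $\frac{1-t^i}{1+t^i}$, $\frac{1}{1+t^i}$, or $1 - t^i$, where $i$ is the degree of the place(s) added at that stage.

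Next I would record elementary coefficient estimates for the three factor types: $\frac{1-t^i}{1+t^i} = 1 - 2t^i + 2t^{2i} - \cdots$ has Taylor coefficients bounded by $2^n$, while $\frac{1}{1+t^i}$ and $1 - t^i$ have coefficients bounded by $1 = 1^n$. Each factor has constant term $1$. Viewing type (a) as ``weight $2$'' (since it adds two places) and types (b), (d) as ``weight $1$'', induction via the lemma yields $a_0 = 1$ and $\abs{a_n} \le (\abs{S}-\abs{S'})^n$, as the weights sum to $\abs{S}-\abs{S'}$. For the linear term, the lemma also gives $c_1 = a_1 + b_1$ whenever both factors have constant term $1$, so linear coefficients are additive across the product. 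A factor with $i > 1$ contributes $0$; factors with $i = 1$ of types (a), (b), (d) contribute $-2$, $-1$, $-1$ respectively, which in each case equals the negative of the number of degree-one places introduced at that stage. Summing gives $a_1 = -(\abs{S_1} - \abs{S'_1})$.

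The main subtlety is bookkeeping: one must check that the decomposition of $S \setminus S'$ is well-defined -- for a split $\frakp \in S \setminus S'$, its conjugate lies either in $S \setminus S'$ or in $S'$, and these cases are genuinely disjoint -- and that the type-(d) insertions at the end satisfy the hypothesis of case (d), namely that $\sigma(\frakp)$ is already present in the current set. The latter is automatic because those conjugates lie in $S'$, hence in every intermediate set. Everything else is a routine induction on the number of factors using the lemma.
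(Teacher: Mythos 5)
Your overall strategy---factoring $h_S(t)/h_{S'}(t)$ into elementary pieces supplied by the Theorem and then feeding them into the power series lemma---is sound, and your weight bookkeeping and the additivity of linear coefficients are handled correctly. But your case analysis of $S \setminus S'$ has a genuine gap: you assert that for a split place $\frakp \in S \setminus S'$ the conjugate $\sigma(\frakp)$ lies either in $S \setminus S'$ or in $S'$. It may also lie outside $S$ altogether (this is exactly the quantity $m_i$ in Corollary~\ref{hSexplicitdescription}). For such a place none of cases (a), (b), (d) of the Theorem applies when passing from $S'$ to $S$: case (a) would force you to also adjoin $\sigma(\frakp)$, which is not in $S$, and case (d) requires $\sigma(\frakp)$ to already be present in the current set. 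So the product of factors you construct omits these places entirely and does not equal $h_S(t)/h_{S'}(t)$.

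The fix is to route through $S \cup \sigma(S)$, as the paper does in Corollary~\ref{hSexplicitdescription}: adjoin the full pair $\{\frakp, \sigma(\frakp)\}$ by case (a), then cancel the unwanted $\sigma(\frakp)$ by dividing by the case-(d) factor $1 - t^{\deg \frakp}$; the net contribution of such a place is $\frac{1}{1 + t^{\deg \frakp}}$, which has constant term $1$, coefficients bounded by $1^n$, and linear coefficient $-1$ exactly when $\deg \frakp = 1$. With this fourth factor type your weight count (one unit per place of this kind) and your linear-term count still sum to $\abs{S} - \abs{S'}$ and $-(\abs{S_1} - \abs{S'_1})$ respectively, so the statement survives. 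This is essentially the paper's own route: it divides the closed formula of Corollary~\ref{hSexplicitdescription} for $S$ by the one for $S'$ and then must rewrite the resulting exponents (which can a priori be negative, since $m'_i$ may exceed $m_i$ for $S' \subseteq S$) as a product of factor types with nonnegative exponents before the lemma can be applied degree by degree.
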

  
  \begin{proof}
    Define $S_i$, $S'_i$, $n_i, n'_i$, $m_i, m'_i$ and $\ell_i, \ell'_i$ as before. Then
    \[ h_S(t) = h_{S'}(t) \cdot \prod_{i=1}^\infty \left( \left(\frac{1 - t^i}{1 +
    t^i}\right)^{\ell_i - \ell'_i} \left(\frac{1}{1 + t^i}\right)^{n_i + m_i - n_i' -
    m_i'}\right). \] Note that $\ell'_i \le \ell_i$ and $n'_i \le n_i$, whence $\ell_i - \ell'_i \ge
    0$ an $n'_i - n_i \ge 0$. Moreover, $\widetilde{M}_i := \max\{ m_i - m'_i, 0 \} \le \ell_i -
    \ell'_i$ and $m'_i - m_i = \widehat{M}_i - \widetilde{M}_i$ with $\widehat{M}_i := \max\{ m'_i -
    m_i, 0 \}$. Write \[ \sum_{n=0}^\infty c_n t^n = \left(\frac{1 - t^i}{1 + t^i}\right)^{\ell_i -
    \ell'_i - \widetilde{M}_i} \left(\frac{1}{1 + t^i}\right)^{(n_i - n'_i) + \widehat{M}_i} \left(
    1 - t^i \right)^{\widetilde{M}_i}. \] Hence, by applying the lemma repeatedly, we obtain
    \begin{align*}
      c_1 ={} & \begin{cases} -(\abs{S_1} - \abs{S'_1}) & \text{if } i = 1, \\ 0 &
        \text{otherwise} \end{cases} \\
      \text{and} \quad \abs{c_n} \le{} & [2 (\ell_i - \ell'_i - \widetilde{M}_i) + (n_i - n'_i) +
      \widehat{M}_i + \widetilde{M}_i]^n = (\abs{S_i} - \abs{S'_i})^n.
    \end{align*}
  \end{proof}
  
  We can now make an explicit statement on the number of hole elements $\abs{\Red_\frakp(K)
  \setminus \Red_S(K)}$ for all hyperelliptic curves.
  
  \begin{corollary}
    \label{reddifferenceformula}
    Assume that $g \ge 1$, let $S$ be finite and let $\frakp \in S$ be a place of degree~one. There
    exist efficiently computable $c_1, \dots, c_g \in \Z$ such that \[ \abs{\Red_\frakp(K) \setminus
    \Red_S(K)} = (\abs{S_1} - 1) C_{g-1}(\{ \frakp \}) - \sum_{j=0}^{g-2} \sum_{i=1}^{g-j} c_i
    C_j(\{ \frakp \}) \] and
    \begin{align*}
      & \abs{\abs{\Red_\frakp(K) \setminus \Red_S(K)} - (\abs{S_1} - 1) C_{g-1}(\{ \frakp \})} \\
      {}\le{} & \sum_{j=0}^{g-2} (g - j) (\abs{S} - 1)^{g-j} C_j(\{ \frakp \}).
    \end{align*}
  \end{corollary}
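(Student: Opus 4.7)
My plan is to apply Corollary \ref{coefficientboundcorollary} with $S' = \{\frakp\}$ and to extract both the identity and the bound by comparing coefficients in the resulting generating-function factorization. First I would reduce to the case that $S$ contains no place lying above an inert place of $R$: by Proposition \ref{SSprimerelation}(c), deleting such a place from $S$ leaves every $\Red_S^d(K)$ unchanged, and because an inert place of $R$ lies beneath a place of $K$ of degree $\ge 2$, the quantity $\abs{S_1}$ is unaffected while $\abs{S}$ only decreases, so the claim for the reduced $S$ implies the claim for the original. Under this reduction Corollary \ref{coefficientboundcorollary} gives
\[
  h_S(t) \;=\; h_{\{\frakp\}}(t)\left(1 + \sum_{n=1}^\infty c_n t^n\right), \qquad c_1 = -(\abs{S_1}-1), \qquad \abs{c_n} \le (\abs{S}-1)^n,
\]
with $c_n \in \Z$ read off directly from the explicit product formula in Corollary \ref{hSexplicitdescription}, which also gives their efficient computability.

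Comparing coefficients of $t^d$ yields $C_d(S) = C_d(\{\frakp\}) + \sum_{i=1}^d c_i\, C_{d-i}(\{\frakp\})$. Since $\frakp \in S$, both $\Red_\frakp(K)$ and $\Red_S(K)$ are the disjoint union over $d \in \{0,\dots,g\}$ of their degree-$d$ pieces, with $\Red_S^d(K) \subseteq \Red_\frakp^d(K)$. Therefore
\[
  \abs{\Red_\frakp(K) \setminus \Red_S(K)} \;=\; \sum_{d=0}^g \bigl(C_d(\{\frakp\}) - C_d(S)\bigr) \;=\; -\sum_{d=1}^g \sum_{i=1}^d c_i\, C_{d-i}(\{\frakp\}).
\]
I would then interchange the order of summation by writing $j := d - i$: for fixed $j \in \{0,\dots,g-1\}$ the index $i$ ranges over $\{1,\dots,g-j\}$, so the right-hand side becomes $-\sum_{j=0}^{g-1} C_j(\{\frakp\})\sum_{i=1}^{g-j} c_i$. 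Peeling off the $j = g-1$ summand, whose inner sum is $c_1 = -(\abs{S_1}-1)$, produces exactly the claimed equality.

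For the bound I would apply the triangle inequality and invoke $\abs{c_i} \le (\abs{S}-1)^i$: for each $j \le g-2$, $\abs{\sum_{i=1}^{g-j} c_i} \le \sum_{i=1}^{g-j}(\abs{S}-1)^i \le (g-j)(\abs{S}-1)^{g-j}$, the case $S = \{\frakp\}$ being trivial. Summing this estimate against $C_j(\{\frakp\})$ over $j = 0,\dots,g-2$ then delivers the asserted inequality. I do not anticipate a hard step; the only mild subtleties are the inert-place reduction and the index manipulation in the double sum, both of which amount to direct bookkeeping on top of Corollary \ref{coefficientboundcorollary}.
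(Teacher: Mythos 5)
Your proposal is correct and follows essentially the same route as the paper: apply Corollary~\ref{coefficientboundcorollary} with $S' = \{\frakp\}$, compare coefficients of the factorization $h_S = h_{\{\frakp\}}\cdot\sum a_n t^n$, reindex the double sum, peel off the $j = g-1$ term, and bound via $\abs{a_n}\le(\abs{S}-1)^n$. Your explicit preliminary reduction to the case where $S$ contains no places above inert places of $R$ (needed because Corollary~\ref{coefficientboundcorollary} assumes this while the statement here does not) is a detail the paper's proof silently skips, and you handle it correctly.
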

  
  \begin{proof}
    By Corollary~\ref{coefficientboundcorollary}, we have that $h_S(t) = h_{\{ \frakp \}}(t) \cdot
    \sum_{n=0}^\infty a_n t^n$ with $a_0 = 1$, $a_1 = -(\abs{S_1} - 1)$ and $\abs{a_n} \le (\abs{S}
    - 1)^n$. Therefore, \[ h_{\{ \frakp \}}(t) - h_S(t) = \left( \sum_{n=0}^\infty C_n(\{ \frakp \})
    t^n \right) \cdot \left( (\abs{S_1} - 1) - \sum_{n=2}^\infty a_n t^n \right). \] Now the
    coefficient of $t^d$ equals $\bigl|\Red_\frakp^d(K)\bigr| - \bigl|\Red_S^d(K)\bigr|$, whence
    \begin{align*}
      & \abs{\Red_\frakp(K) \setminus \Red_S(K)} = -\sum_{d=1}^g \sum_{i=0}^{d-1} C_i(\{ \frakp \})
      a_{d-i} \\
      {}={} & -\sum_{i=0}^{g-1} \sum_{d=i+1}^g a_{d-i} C_i(\{ \frakp \}) = -a_1 C_{g-1}(\{ \frakp
      \}) - \sum_{i=0}^{g-2} \sum_{d=1}^{g-i} a_d C_i(\{ \frakp \}),
    \end{align*}
    which implies the first equality. By using $\abs{a_n} \le (\abs{S} - 1)^n$, we get
    \begin{align*}
      & \abs{\abs{\Red_\frakp(K) \setminus \Red_S(K)} - (\abs{S_1} - 1) C_{g-1}(\{ \frakp \})} \\
      {}\le{} & \sum_{i=0}^{g-2} \sum_{d=1}^{g-i} (\abs{S} - 1)^d C_i(\{ \frakp \}) \le
      \sum_{i=0}^{g-2} (g - i) (\abs{S} - 1)^{g-i} C_i(\{ \frakp \}).
    \end{align*}
  \end{proof}
  
  In this section we found a description of the generating function~$h_S(t)$ of $\Red_S(K)$ in terms
  $h_\emptyset(t)$ and a rational factor, of which we have information on its coefficients in the
  Taylor expansion. This allowed us to give a bound on $\abs{\abs{\Red_\frakp(K) \setminus
  \Red_S(K)} - (\abs{S_1} - 1) C_{g-1}(\{ \frakp \})}$ in terms of $\abs{S} - 1$ and $C_i(\{ \frakp
  \})$, $0 \le i \le g$.
  
  Our next goal is to obtain information on $h_{\{ \frakp \}}(t)$, i.e. on the $C_i(\{ \frakp
  \})$'s; then, we can combine this information with the above result to obtain our main result on
  $\abs{\Red_\frakp(K) \setminus \Red_S(K)}$.
  
  \section{Counting Reduced Divisors of Certain Degrees}
  \label{countingreddivs}
  
  In this section, we want to obtain information on $h_{\{ \frakp \}}(t)$. In particular, we show
  that all $h_S(t)$ are rational as long as $S$ is finite and relate $h_{\{ \frakp \}}(t)$ to the
  $L$-polynomial of $K$.
  
  Let $k = \F_q$, the field of $q$ elements. We begin considering $C_d(\emptyset) =
  \bigl|\Red_\emptyset^d(K)\bigr|$ for all $d \in \N$. Later, we will relate the $C_d(\emptyset)$'s
  with the $C_d(\{ \frakp \})$'s.
  
  For $d \in \N$, we also consider the sets 
  \begin{align*}
    \Div_+^d(K) :={} & \{ D \in \Div(K) \mid D \ge 0, \deg D = d \} \\ \text{and} \qquad
    \Div_+^d(R) :={} & \{ D \in \Div(R) \mid D \ge 0, \deg D = d \};
  \end{align*}
  set $A_n(K) := \bigl|\Div_+^d(K)\bigr|$ and $A_n(R) := \bigl|\Div_+^d(R)\bigr|$.
  
  \begin{proposition}
    Let $d \ge 0$. For every $D \in \Div_+^d(K)$, there exists a unique integer $r$ with $0 \le r
    \le d/2$ and two unique divisors $D_R \in \Div_+^r(R)$, $D_K \in \Red_\emptyset^{d - 2 r}(K)$
    such that $D = \Conorm_{K/R}(D_R) + D_K$.
  \end{proposition}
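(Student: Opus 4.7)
The plan is to extract $D_R$ from $D$ in a completely local fashion, place by place of $R$, using the conorm formulas recalled in the lemma on splitting behaviour. Specifically, for each place $\frakq$ of $R$, I would define $\nu_\frakq(D_R)$ according to whether $\frakq$ is inert, ramified, or split in $K$:
\begin{itemize}
\item if $\frakq$ is inert with unique place $\frakp$ above it, set $\nu_\frakq(D_R) := \nu_\frakp(D)$, since $\Conorm_{K/R}(\frakq) = \frakp$;
\item if $\frakq$ is ramified with unique place $\frakp$ above it, set $\nu_\frakq(D_R) := \floor{\nu_\frakp(D)/2}$, since $\Conorm_{K/R}(\frakq) = 2 \frakp$;
\item if $\frakq$ splits into $\frakp_1, \frakp_2$, set $\nu_\frakq(D_R) := \min(\nu_{\frakp_1}(D), \nu_{\frakp_2}(D))$, since $\Conorm_{K/R}(\frakq) = \frakp_1 + \frakp_2$.
\end{itemize}
I would then define $D_K := D - \Conorm_{K/R}(D_R)$ and $r := \deg D_R$.

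For existence, first note that $D_R \ge 0$ is immediate, and $D_K \ge 0$ follows by checking each place: in every case the local recipe subtracts at most $\nu_\frakp(D)$ from the $\frakp$-valuation of $D$. Since $\Conorm_{K/R}$ doubles degrees, $\deg D_K = d - 2r$, which forces $r \le d/2$. To show $D_K \in \Red_\emptyset^{d-2r}(K)$ I would apply Proposition~\ref{combinatoricaldescriptionofreds2}: condition~(ii) is vacuous (since $S = \emptyset$), and condition~(iii) holds because the recipe was designed precisely to leave, at the inert/ramified/split places, multiplicities $0$, $\in \{0, 1\}$, and $\{0, m\}$ for some $m$, respectively.

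For uniqueness, suppose $D = \Conorm_{K/R}(D_R') + D_K'$ is any decomposition satisfying the stated conditions. I would argue locally at each place $\frakq$ of $R$: the reducedness condition~(iii) of Proposition~\ref{combinatoricaldescriptionofreds2} applied to $D_K'$ forces $\nu_\frakp(D_K') = 0$ in the inert case, $\nu_\frakp(D_K') \in \{0, 1\}$ in the ramified case, and $\min(\nu_{\frakp_1}(D_K'), \nu_{\frakp_2}(D_K')) = 0$ in the split case. Combined with $\nu_\frakp(D) = e(\frakp/\frakq) \nu_\frakq(D_R') + \nu_\frakp(D_K')$ (summed over $\frakp \mid \frakq$ with appropriate coefficients), this uniquely determines $\nu_\frakq(D_R')$ by the same formulas used above, so $D_R' = D_R$ and therefore $D_K' = D_K$.

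There is no real obstacle here: once Proposition~\ref{combinatoricaldescriptionofreds2} is in hand, the entire argument is place-by-place bookkeeping driven by the three possible splitting behaviours of $\frakq$ in $K/R$. The only subtlety worth flagging is that the existence proof must actually verify that after subtracting $\Conorm_{K/R}(D_R)$, no further conorm divisor can fit inside $D_K$; this is exactly what the choice of maximal multiplicity in each local recipe guarantees, and the same computation simultaneously powers the uniqueness argument.
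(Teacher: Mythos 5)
Your proof is correct and takes essentially the same approach as the paper: the paper defines $D_R$ abstractly as the maximal element of the finite lattice $\{ D' \in \Div(R) \mid D' \ge 0,\ \Conorm_{K/R}(D') \le D \}$, and your place-by-place recipe (the full valuation at inert places, $\floor{\nu_\frakp(D)/2}$ at ramified places, the minimum of the two valuations at split places) is exactly that maximal element computed explicitly. Your local uniqueness argument likewise just makes concrete what the paper dismisses as ``clear from the lattice structure,'' so the two proofs differ only in level of explicitness.
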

  
  \begin{proof}
    For $D \in \Div_+^d(K)$, consider \[ A(D) := \{ D_R \in \Div_+(R) \mid D_R \ge 0, \;
    \Conorm_{K/R}(D_R) \le D \}. \] This set turns out to be a finite lattice when ordered with
    $\le$, whence it has a maximal element, say $D_R$. Then $D_K := D - \Conorm_{K/R}(D_R) \ge 0$
    and, if $r := \deg D_R$, by the maximality of $D_R$, $D_K \in \Red^{d - 2 r}(K)$. The uniqueness
    is clear from the lattice structure of $A(D)$.
  \end{proof}
  
  \begin{corollary}
    For $d \ge 0$, we have \[ A_d(K) = \sum_{r=0}^{\floor{d/2}} A_r(R) C_{d - 2 r}(\emptyset). \]
    \qed
  \end{corollary}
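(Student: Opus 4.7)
The plan is to extract the Corollary from the preceding Proposition as a direct cardinality identity. Concretely, I would repackage the Proposition as the bijection
\[ \bigsqcup_{r=0}^{\floor{d/2}} \Div_+^r(R) \times \Red_\emptyset^{d-2r}(K) \;\longrightarrow\; \Div_+^d(K), \qquad (D_R, D_K) \longmapsto \Conorm_{K/R}(D_R) + D_K, \]
and read off the claimed formula by counting cardinalities on each side.

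First I would check that the map is well-defined. For any pair $(D_R, D_K)$ in the $r$-th component, both summands are effective, so the image is effective; and since $[K:R] = 2$ forces $\deg \Conorm_{K/R}(D_R) = 2r$, the total degree of the image is $2r + (d-2r) = d$. The bound $r \le \floor{d/2}$ is exactly what is needed to keep $d - 2r \ge 0$, so that $\Red_\emptyset^{d-2r}(K)$ makes sense; any larger $r$ would force $D_K$ to have negative degree, so the index set in the sum is precisely this range.

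Surjectivity and injectivity of the map are exactly the existence and uniqueness statements of the Proposition. The one subtlety I would double-check is that, starting from an arbitrary pair $(D_R, D_K)$ and reapplying the canonical decomposition of the Proposition to $\Conorm_{K/R}(D_R) + D_K$, one recovers the same $D_R$. This follows because any $D_R' \ge D_R$ in $\Div_+(R)$ with $\Conorm_{K/R}(D_R') \le \Conorm_{K/R}(D_R) + D_K$ satisfies $\Conorm_{K/R}(D_R' - D_R) \le D_K$, which forces $D_R' - D_R = 0$ by the reducedness of $D_K$. With the bijection confirmed, taking cardinalities term-by-term and using $\abs{\Div_+^r(R) \times \Red_\emptyset^{d-2r}(K)} = A_r(R)\,C_{d-2r}(\emptyset)$ yields the identity. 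The genuine work was packed into the Proposition, so there is no substantive obstacle here; the Corollary is essentially bookkeeping.
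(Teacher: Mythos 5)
Your proposal is correct and matches the paper's intent exactly: the paper states this corollary with no written proof, treating it as an immediate cardinality count from the existence-and-uniqueness statement of the preceding proposition, which is precisely the bijection you spell out. Your extra verification that the canonical decomposition recovers $D_R$ is a harmless (and correct) elaboration of the uniqueness already asserted in the proposition.
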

  
  The zeta function of $K$ is given by \[ Z_K(t) := \sum_{n=0}^\infty A_n(K) t^n, \] and the zeta
  function of $R$ is given by \[ Z_R(t) := \sum_{n=0}^\infty A_n(R) t^n = \frac{1}{(1 - t) (1 - q
  t)}. \] (See \cite[Chapter~5]{stichtenoth}.)
  
  Consider the formal power series $h_\emptyset(t) = \sum_{d=0}^\infty C_d(\emptyset) t^d \in
  \Q[[t]]$. The following result shows its relation to the zeta function of $K$ and the zeta
  function of $R$:
  
  \begin{lemma}
    Let $f(t) = Z_R(t^2) \cdot h_\emptyset(t) \in \Q[[t]]$ as a formal power series; write $f(t) =
    \sum_{n=0}^\infty a_n t^n$. Then, for $d \ge 0$, $a_n = A_n(K)$, i.e. $f(t) = Z_K(t)$.
  \end{lemma}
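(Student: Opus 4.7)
The plan is to compute $f(t)$ directly as a Cauchy product and compare with the previous corollary. The lemma is really just a restatement of the identity
\[ A_d(K) = \sum_{r=0}^{\lfloor d/2 \rfloor} A_r(R) \, C_{d-2r}(\emptyset) \]
in generating-function language, so no deep new input is needed.

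First I would write out
\[ Z_R(t^2) = \sum_{r=0}^\infty A_r(R) \, t^{2r}, \qquad h_\emptyset(t) = \sum_{d=0}^\infty C_d(\emptyset) \, t^d, \]
noting that the first identity follows by substituting $t \mapsto t^2$ in the definition of $Z_R(t)$. Then I would apply the Cauchy product formula to get
\[ a_n = \sum_{\substack{2r + s = n \\ r, s \ge 0}} A_r(R) \, C_s(\emptyset) = \sum_{r=0}^{\lfloor n/2 \rfloor} A_r(R) \, C_{n - 2r}(\emptyset). \]

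Second, I would invoke the preceding corollary to recognize the right-hand side as $A_n(K)$, concluding $a_n = A_n(K)$ for every $n \ge 0$, which by the definition of $Z_K(t)$ yields $f(t) = Z_K(t)$.

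There is no real obstacle here: the only substantive content sits in the decomposition proposition (every effective divisor $D \in \Div_+^d(K)$ factors uniquely as $\Conorm_{K/R}(D_R) + D_K$ with $D_K$ reduced in the sense of $\Red_\emptyset^{d-2r}(K)$), which has already been proved. The remaining argument is a routine power-series manipulation, so I would present it compactly.
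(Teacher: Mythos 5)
Your proposal is correct and matches the paper's own proof essentially verbatim: both expand $Z_R(t^2)$ and $h_\emptyset(t)$ as power series, compute the Cauchy product to obtain $a_n = \sum_{r=0}^{\floor{n/2}} A_r(R)\, C_{n-2r}(\emptyset)$, and then invoke the preceding corollary to identify this sum with $A_n(K)$. No further comment is needed.
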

  
  \begin{proof}
    We have \[ f = \biggl(\sum_{n=0}^\infty A_n(R) t^{2 n}\biggr) \cdot \biggl(\sum_{m=0}^\infty
    C_m(\emptyset) t^m\biggr). \] Hence, the coefficient of $t^d$ in the product is given by
    $\sum_{2 n + m = d} A_n(R) C_m(\emptyset)$. But this means $n \le \floor{d/2}$, i.e. we can
    write this sum as $\sum_{n=0}^{\floor{d/2}} A_n(R) C_{d - 2 n}(\emptyset)$, which equals
    $A_d(K)$ by the previous corollary.
  \end{proof}
  
  Therefore, we see that \[ h_\emptyset(t) = \frac{Z_K(t)}{Z_R(t^2)} = (1 - t^2) (1 - q t^2)
  Z_K(t). \] Now $Z_K(t)$ is a rational function as well: by \cite[p.~193,
  Theorem~V.1.15]{stichtenoth}, $Z_K(R) = \frac{L_K(t)}{(1 - t) (1 - q t)}$, where $L_K \in \Z[t]$
  with $\deg L_K = 2 g$; the polynomial $L_K$ is called the \emph{$L$-polynomial} of $K$. Hence, \[
  h_\emptyset(t) = \frac{(1 - t^2) (1 - q t^2) L_K(t)}{(1 - t) (1 - q t)} = \frac{(1 + t) (1 - q
  t^2) L_K(t)}{1 - q t} \] is a rational function with a simple pole in $t = q^{-1}$; in particular,
  $h_\emptyset(t)$ is a convergent power series with radius of convergence $q^{-1}$. Therefore, we
  obtain:
  
  \begin{theorem}
    For any finite set $S$ of places of $K$, $h_S(t)$ is a convergent power series with radius of
    convergence~$q^{-1}$. In particular, \[ h_\emptyset(t) = \frac{(1 + t) (1 - q t^2) L_K(t)}{1 - q
    t}, \] where $L_K$ is the $L$-polynomial of $K$. \qed
  \end{theorem}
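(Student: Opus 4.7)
The plan is to chain the two identities established immediately before the statement to obtain the explicit formula for $h_\emptyset$, and then propagate convergence from $S = \emptyset$ to arbitrary finite $S$ using Corollary~\ref{hSexplicitdescription}.

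First I would treat the case $S = \emptyset$. The preceding lemma gives $Z_K(t) = Z_R(t^2) \cdot h_\emptyset(t)$, hence $h_\emptyset(t) = Z_K(t)/Z_R(t^2)$. Plugging in $Z_R(t^2) = 1/[(1-t^2)(1-qt^2)]$ and $Z_K(t) = L_K(t)/[(1-t)(1-qt)]$, and simplifying $(1-t^2)/(1-t) = 1+t$, yields the asserted closed form. This is a rational function whose only pole in $\C$ is the simple pole at $t = q^{-1}$ coming from the denominator factor $1 - qt$: the roots of $L_K$ have absolute value $q^{-1/2}$ by the Hasse--Weil bound, so $L_K(q^{-1}) \neq 0$, and $(1+t)(1-qt^2)$ vanishes only at $\{-1, \pm q^{-1/2}\}$. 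Consequently $h_\emptyset(t)$ has radius of convergence exactly $q^{-1}$.

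For a general finite $S$, I would first reduce to the case where $S$ contains no places lying above inert places of $R$: by Proposition~\ref{SSprimerelation}(c), each such place can be removed from $S$ without altering $\Red_S^d(K)$ for any $d$, and hence without altering $h_S$. For the reduced set, Corollary~\ref{hSexplicitdescription} expresses $h_S(t)$ as $h_\emptyset(t)$ times a finite product of factors of the form $(1 \pm t^i)^{\pm 1}$. This correction factor is a rational function whose poles lie only at roots of $1 + t^i = 0$, that is, on the unit circle $|t| = 1$; since $q \ge 2$ we have $q^{-1} < 1$, so no new singularity is introduced in the disc $|t| < q^{-1}$. Moreover, evaluating the correction factor at $t = q^{-1}$ gives a product of terms $1 \pm q^{-i}$, all nonzero, so the simple pole of $h_\emptyset$ at $q^{-1}$ is preserved. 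Hence $h_S$ inherits the same radius of convergence $q^{-1}$.

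The main obstacle is essentially bookkeeping: making sure the correction factor is a genuinely finite product (immediate from finiteness of $S$) and verifying that it neither cancels the pole of $h_\emptyset$ at $t = q^{-1}$ nor creates a competing singularity inside the disc. Once these points are checked, the theorem falls out as a direct combination of the zeta-function identity of the previous lemma, the rationality of $Z_K$ via the $L$-polynomial, and the transformation rules derived in Section~\ref{genfuns}.
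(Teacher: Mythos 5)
Your proposal is correct and follows essentially the same route as the paper: the explicit formula comes from the identity $Z_K(t) = Z_R(t^2)\,h_\emptyset(t)$ of the preceding lemma combined with the rational expressions for $Z_R$ and $Z_K$, and the radius of convergence for general finite $S$ is propagated from $h_\emptyset$ via the transformation rules of Section~\ref{genfuns}. Your additional checks (that $L_K(q^{-1}) \neq 0$ by Hasse--Weil, that inert places can be discarded via part (c), and that the correction factors neither cancel the pole at $q^{-1}$ nor introduce singularities inside the disc) are exactly the bookkeeping the paper leaves implicit.
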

  
  Important as well is the fact that $(1 - q t) h_\emptyset(t)$ is a polynomial of degree~$2 g + 3$,
  namely $(1 + t) (1 - q t^2) L_K(t)$. Write \[ L_K(t) = \sum_{i=0}^{2 g} a_i t^i. \] Then $a_0 =
  1$, $a_{2 g} = q^g$ and $a_{2 g - i} = q^{g - i} a_i$ for $0 \le i \le g$. Moreover, $a_1 = N - (q
  + 1)$, where $N = A_1(K) = \abs{\Div_+^1(K)}$ is the number of rational places of degree~one.
  
  \begin{lemma}
    We have \[ (1 - q t) h_\emptyset(t) = 1 + (C_1(\emptyset) - q) t + \sum_{d=2}^\infty
    (C_d(\emptyset) - q C_{d-1}(\emptyset)) t^d \] and
    \begin{align*}
      (1 + t) (1 - q t^2) L_K(t) ={} & 1 + (a_1 + 1) t + (a_1 + a_2 - q) t^2 \\
      {}+{} & \sum_{i=3}^{2 g} (a_i + a_{i-1} - q a_{i-2} - q a_{i-3}) t^i \\
      {}+{} & (q^g - q a_{2 g - 1} - q a_{2 g - 2}) t^{2 g + 1} \\
      {}-{} & q (q^g + a_{2 g - 1}) t^{2 g + 2} - q^{g+1} t^{2 g + 3}.
    \end{align*}
  \end{lemma}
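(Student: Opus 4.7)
My plan is to verify the two displayed identities by straightforward coefficient bookkeeping; both are essentially polynomial manipulations, so the only ``obstacle'' is to keep the index shifts for the second identity straight.

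For the first identity, I would start from the definition $h_\emptyset(t) = \sum_{d=0}^\infty C_d(\emptyset) t^d$ and compute
\[ (1 - q t) h_\emptyset(t) = \sum_{d=0}^\infty C_d(\emptyset) t^d - q \sum_{d=0}^\infty C_d(\emptyset) t^{d+1} = C_0(\emptyset) + \sum_{d=1}^\infty (C_d(\emptyset) - q C_{d-1}(\emptyset)) t^d. \]
Since the only effective divisor of degree~$0$ is the zero divisor, which lies in $\Red_\emptyset^0(K)$, one has $C_0(\emptyset) = 1$, and splitting off the $d = 1$ term produces the stated form.

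For the second identity, I would first expand
\[ (1 + t)(1 - q t^2) = 1 + t - q t^2 - q t^3, \]
and then multiply this degree-three polynomial by $L_K(t) = \sum_{i=0}^{2g} a_i t^i$ by collecting the coefficient of $t^d$ from the four shifted copies $a_d, a_{d-1}, -q a_{d-2}, -q a_{d-3}$ (where $a_j = 0$ for $j < 0$ or $j > 2g$). For $3 \le d \le 2g$ this yields the middle sum $a_d + a_{d-1} - q a_{d-2} - q a_{d-3}$. The boundary values $d = 0, 1, 2$ follow by dropping the terms with negative index and using $a_0 = 1$, and the three top terms $d = 2g+1, 2g+2, 2g+3$ follow by dropping the terms with index exceeding $2g$ and using $a_{2g} = q^g$, which finally gives the coefficients $q^g - q a_{2g-1} - q a_{2g-2}$, $-q(q^g + a_{2g-1})$ and $-q^{g+1}$ as stated.

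The main (very mild) subtlety is just to ensure that the ``overlap'' regions at the two ends of the polynomial are treated correctly, in particular to rewrite $-q a_{2g}$ as $-q^{g+1}$ using the symmetry/top coefficient of~$L_K$. Beyond that, the lemma is purely a matching of coefficients and requires no further input from the theory developed earlier.
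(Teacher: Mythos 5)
Your proposal is correct and matches the paper's proof, which simply states that the lemma is "an easy and direct computation"; you carry out exactly that computation, expanding $(1-qt)h_\emptyset(t)$ termwise and collecting the four shifted copies $a_d, a_{d-1}, -qa_{d-2}, -qa_{d-3}$ of the $L$-polynomial coefficients, with the boundary cases handled via $a_0 = 1$ and $a_{2g} = q^g$ as given in the paper.
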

  
  \begin{proof}
    This is an easy and direct computation.
  \end{proof}
  
  Using $(1 - q t) h_\emptyset(t) = (1 + t) (1 - q t^2) L_K(t)$ and comparing coefficients, we obtain:
  
  \begin{corollary}
    \label{hatCexplicit}
    For $d \in \{ 4, \dots, 2 g \}$, we have
    \begin{align*}
      C_0(\emptyset) ={} & 1, \\
      C_1(\emptyset) ={} & q + a_1 + 1, \\
      C_2(\emptyset) ={} & q^2 + a_1 q (1 + q^{-1}) + a_2, \\
      C_3(\emptyset) ={} & q^3 + a_1 q^2 + a_2 q (1 + q^{-1}) + a_3 - q \qquad \text{and} \\
      C_d(\emptyset) ={} & q^d + \sum_{i=1}^{d-3} a_i q^{d - i} (1 - q^{-2}) + a_{d-2} q^2 + a_{d-1}
      q (1 + q^{-1}) + a_d - q^{d - 2}.
    \end{align*}
  \end{corollary}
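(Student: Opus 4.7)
The plan is to compare coefficients of $t^d$ on the two sides of the identity
\[ (1 - qt)\, h_\emptyset(t) = (1 + t)(1 - qt^2)\, L_K(t) \]
from the preceding theorem, using the explicit expansions of both sides supplied by the preceding lemma. Each line of the corollary then falls out.

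For $d = 0$ the constant terms give $C_0(\emptyset) = 1$ immediately. For $d = 1, 2, 3$ I would successively solve the linear relations supplied by the lemma: equating the left-hand coefficient $C_d(\emptyset) - q C_{d-1}(\emptyset)$ with the right-hand values $a_1 + 1$, $a_1 + a_2 - q$, and $a_3 + a_2 - q a_1 - q$ respectively, then substituting the previously computed value of $C_{d-1}(\emptyset)$ and rewriting $a_i(q+1) = a_i q(1 + q^{-1})$ to match the stated presentation.

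For $d \in \{4, \dots, 2g\}$, the lemma yields the recurrence
\[ C_d(\emptyset) = q\, C_{d-1}(\emptyset) + a_d + a_{d-1} - q a_{d-2} - q a_{d-3}, \]
and I would prove the closed form by induction on $d$, with $d = 4$ as base case (directly verifiable from $C_3(\emptyset)$ and the recurrence). In the induction step, multiplying the formula for $C_{d-1}(\emptyset)$ by $q$ shifts the sum $\sum_{i=1}^{d-4} a_i q^{d-1-i}(1 - q^{-2})$ to $\sum_{i=1}^{d-4} a_i q^{d-i}(1 - q^{-2})$ and promotes the explicit boundary terms $a_{d-3} q^2,\; a_{d-2} q(1+q^{-1}),\; a_{d-1},\; -q^{d-3}$ into $a_{d-3} q^3,\; a_{d-2} q(q+1),\; q a_{d-1},\; -q^{d-2}$. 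Adding $a_d + a_{d-1} - q a_{d-2} - q a_{d-3}$ then collapses the $a_{d-3}$ contributions to $a_{d-3} q^3(1 - q^{-2})$, which is exactly the new term needed to extend the sum to upper index $d-3$, while the $a_{d-2}, a_{d-1}, a_d$ contributions reassemble into the target boundary terms $a_{d-2} q^2$, $a_{d-1} q(1 + q^{-1})$, and $a_d$.

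The argument is essentially mechanical; the only delicate point is the bookkeeping that verifies the telescoping of the shifted boundary terms together with the additive correction reproduces exactly the next boundary configuration. I would also remark that the corollary stops at $d = 2g$ precisely because beyond this range the lemma's expansion of $(1 + t)(1 - qt^2) L_K(t)$ no longer follows the same pattern (the boundary terms at $t^{2g+1}, t^{2g+2}, t^{2g+3}$ coming from the top monomials of $L_K$ take a different form), so the clean recurrence used here applies only in the stated range.
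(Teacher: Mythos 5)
Your proposal is correct and follows essentially the same route as the paper: extract the recurrence $C_d(\emptyset) - qC_{d-1}(\emptyset) = a_d + a_{d-1} - qa_{d-2} - qa_{d-3}$ by comparing coefficients in the lemma's two expansions, compute $d \le 3$ directly, and establish the general formula by induction from the base case $d = 4$. The bookkeeping you describe for the induction step (the $a_{d-3}$ terms collapsing to $a_{d-3}q^3(1-q^{-2})$, etc.) matches the paper's computation exactly.
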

  
  \begin{proof}
    The equalities for $d \le 2$ follow directly from the lemma. For $d \ge 3$, we have \[
    C_d(\emptyset) - q^d = q (C_{d-1}(\emptyset) - q^{d - 1}) + a_d + a_{d-1} - q a_{d-2} - q
    a_{d-3}. \] Plugging in $d = 3$ and the formula for $C_2(\emptyset)$, we obtain \[
    C_3(\emptyset) - q^3 = q^2 a_1 + q (1 + q^{-1}) a_2 + a_3 - q. \] Now, for $d = 4$, we similarly
    obtain \[ C_4(\emptyset) - q^4 = q^3 (1 - q^{-2}) a_1 + q^2 a_2 + q (1 + q^{-1}) a_3 + a_4 -
    q^2. \] Now let $d \ge 4$; then, using induction,
    \begin{align*}
      C_{d+1}(\emptyset) - q^{d+1} ={} & q (C_d(\emptyset) - q^d) + a_{d+1} + a_d - q a_{d-1} - q a_{d-2} \\
      {}={} & \sum_{i=1}^{d-3} q^{d+1-i} (1 - q^{-2}) a_i + q^3 a_{d-2} + q^2 (1 + q^{-1}) a_{d-1}
      \\
      {}+{} & q a_d - q^{d-1} + a_{d+1} + a_d - q a_{d-1} - q a_{d-2} \\
      {}={} & \sum_{i=1}^{d-3} q^{d+1-i} (1 - q^{-2}) a_i + q^3 (1 - q^{-2}) a_{d-2} + q^2 a_{d-1}
      \\
      {}+{} & q (1 + q^{-1}) a_d + a_{d+1} - q^{d-1},
    \end{align*}
    what we had to show.
  \end{proof}
  
  We have further information on the integers~$a_i$. The result we need in the following are the
  Hasse-Weil bounds:
  
  \begin{proposition}[Hasse-Weil Bounds]
    \label{hasseweilbounds}
    For $i = 0, \dots, 2 g$, we have $\abs{a_i} \le \binom{2 g}{i} q^{i/2}$.
  \end{proposition}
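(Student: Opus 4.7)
The plan is to deduce the bound from the Riemann hypothesis for function fields, which provides the location of the zeros of $L_K$. Specifically, by \cite[p.~197, Theorem~V.2.1]{stichtenoth} (the Hasse--Weil theorem in its standard form), the $L$-polynomial factors over $\C$ as
\[ L_K(t) = \prod_{j=1}^{2g} (1 - \alpha_j t) \]
with $\abs{\alpha_j} = \sqrt{q}$ for every $j = 1, \dots, 2g$.

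From this factorization I would compare coefficients on both sides of the identity $L_K(t) = \sum_{i=0}^{2g} a_i t^i$. Expanding the product shows that
\[ a_i = (-1)^i e_i(\alpha_1, \dots, \alpha_{2g}), \]
where $e_i$ is the $i$-th elementary symmetric polynomial in $2g$ variables, consisting of $\binom{2g}{i}$ monomials, each a product of $i$ distinct $\alpha_j$'s.

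Applying the triangle inequality term-by-term yields
\[ \abs{a_i} \le e_i(\abs{\alpha_1}, \dots, \abs{\alpha_{2g}}) = \binom{2g}{i} (\sqrt{q})^i = \binom{2g}{i} q^{i/2}, \]
which is the desired bound. There is no real obstacle here once the Riemann hypothesis is invoked: the only substantive input is the uniform modulus $\abs{\alpha_j} = \sqrt{q}$ of the reciprocal roots, and the rest is an elementary symmetric-function computation.
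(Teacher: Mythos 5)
Your proof is correct and is essentially identical to the paper's own argument: both invoke the factorization $L_K(t) = \prod_{j=1}^{2g}(1 - \alpha_j t)$ with $\abs{\alpha_j} = q^{1/2}$ from Stichtenoth, identify $a_i$ (up to sign) with the $i$-th elementary symmetric polynomial in the $\alpha_j$, and bound its $\binom{2g}{i}$ terms by the triangle inequality. Nothing is missing.
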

  
  \begin{proof}
    By Hasse-Weil \cite[p.~193, Theorem~V.1.15 and p.~197, Theorem~V.2.1]{stichtenoth}, $L_K(t) =
    \prod_{i=1}^{2 g} (1 - \alpha_i t)$ with $\abs{\alpha_i} = q^{1/2}$. Therefore, \[ a_i = (-1)^i
    \underset{1 \le j_1 < \dots < j_i \le 2 g}{\sum \cdots\cdots \sum} \prod_{t=1}^i
    \alpha_{j_t}. \] The sum has $\binom{2 g}{i}$ terms, whence we obtain the specified bound.
  \end{proof}

  Using them, we can make explicit statements on the cardinality of $C_d(\{ \frakp \})$ for the
  case~$\deg \frakp = 1$:
  
  \begin{theorem}
    \label{boundsonC(frakp)}
    Let $S = \{ \frakp \}$ with $\deg \frakp = 1$. Then, for $d \in \{ 1, \dots, g \}$,
    \begin{align*}
      C_d(S) \le{} & \sum_{i=0}^d \binom{2 g}{i} q^{d - i/2} \\ \text{and} \qquad 
      \abs{C_d(S) - q^d} \le{} & 2 q^{d - 1} + \sum_{i=1}^d \binom{2 g}{i} q^{d - i/2}.
    \end{align*}
  \end{theorem}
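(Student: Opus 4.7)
The plan is to reduce the problem to a clean formula for $h_S(t)$ and then invoke the Hasse--Weil bounds (Proposition~\ref{hasseweilbounds}) termwise.

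First I would apply Corollary~\ref{hSexplicitdescription} to $S = \{\frakp\}$ with $\deg\frakp = 1$. A rational place cannot lie above an inert place of $R$ (a place of $R$ inert in $K$ has residue degree~$2$, forcing the place of $K$ above it to have degree $\ge 2$), so the hypothesis of the corollary is satisfied. One checks in both subcases ($\sigma(\frakp) = \frakp$, giving $n_1 = 1$ and $\ell_1 = m_1 = 0$; versus $\sigma(\frakp) \neq \frakp$, giving $m_1 = 1$ and $n_1 = \ell_1 = 0$) that $\ell_1 = 0$ and $n_1 + m_1 = 1$, with all higher $n_i, \ell_i, m_i$ equal to zero. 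The corollary therefore gives $h_S(t) = h_\emptyset(t)/(1+t)$. Combining this with the formula $h_\emptyset(t) = (1+t)(1 - q t^2) L_K(t)/(1 - q t)$ established earlier yields the cancellation
\[ h_S(t) = \frac{(1 - q t^2) L_K(t)}{1 - q t}. \]

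Next I would expand $(1 - q t^2)/(1 - q t) = 1 + q t + \sum_{n \ge 2}(q^n - q^{n-2}) t^n$ as a power series and multiply by $L_K(t) = \sum_{i=0}^{2g} a_i t^i$. For $1 \le d \le g$ (so in particular $d \le 2g$), reading off the coefficient of $t^d$ gives
\[ C_d(S) = q^d - q^{d-2} + a_d + q\, a_{d-1} + \sum_{i=1}^{d-2} a_i\, q^{d-i}(1 - q^{-2}), \]
with the understanding that the sum is empty and $q^{d-2}$ is dropped when $d < 2$ (the cases $d = 1, 2, 3$ are checked separately but follow the same pattern).

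Finally I apply the Hasse--Weil bound $|a_i| \le \binom{2g}{i} q^{i/2}$ termwise. For the upper bound on $C_d(S)$, I drop the negative term $-q^{d-2}$ and bound each $a_i$ contribution, reassembling the three pieces $\binom{2g}{d} q^{d/2}$, $q\binom{2g}{d-1} q^{(d-1)/2}$, and $\sum_{i=1}^{d-2}\binom{2g}{i} q^{d-i/2}(1-q^{-2})$ into a single sum bounded by $\sum_{i=1}^d \binom{2g}{i} q^{d - i/2}$, yielding the claimed $\sum_{i=0}^d \binom{2g}{i} q^{d-i/2}$. For the bound on $|C_d(S) - q^d|$, the same termwise estimate produces $q^{d-2} + \sum_{i=1}^d \binom{2g}{i} q^{d-i/2}$, and then I use $q^{d-2} \le 2 q^{d-1}$ (equivalent to $1 \le 2q$, which holds for $q \ge 1$) to absorb the leftover power of $q$ into the desired $2 q^{d-1}$ term.

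The argument is essentially mechanical once the cancellation $h_S(t) = (1 - q t^2) L_K(t)/(1 - q t)$ is in hand. The only real care required is bookkeeping: making sure the indexing in the expansion is right, handling the small-$d$ edge cases separately, and checking that the three ``top'' contributions from indices $i = d-1, d$ and the final term of the sum repackage correctly to complete the binomial sum up through $i = d$.
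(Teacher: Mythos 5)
Your overall strategy is the same as the paper's: derive an explicit expression for $C_d(\{\frakp\})$ as a polynomial in $q$ with coefficients built from the $a_i$, and then apply the Hasse--Weil bounds (Proposition~\ref{hasseweilbounds}) termwise, absorbing the leftover power of $q$ into the $2q^{d-1}$ term. The route to the explicit expression is mildly different and arguably cleaner: the paper splits into the cases $\sigma(\frakp)=\frakp$ and $\sigma(\frakp)\neq\frakp$ and works with the recurrences from Proposition~\ref{SSprimerelation} combined with Corollary~\ref{hatCexplicit}, whereas you read off $h_{\{\frakp\}}(t)=h_\emptyset(t)/(1+t)=(1-qt^2)L_K(t)/(1-qt)$ from Corollary~\ref{hSexplicitdescription}, which treats both subcases uniformly. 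Your verification of the hypotheses (no degree-one place lies over an inert place; $\ell_1=0$ and $n_1+m_1=1$ in either subcase) is correct.

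There is, however, an arithmetic slip in your expansion: the coefficient of $t^n$ in $(1-qt^2)/(1-qt)$ for $n\ge 2$ is $q^n-q^{n-1}$, not $q^n-q^{n-2}$ (the latter is the coefficient in $(1+t)(1-qt^2)/(1-qt)$, i.e.\ in $h_\emptyset(t)/L_K(t)$ rather than $h_{\{\frakp\}}(t)/L_K(t)$). The corrected formula is
\[ C_d(S) = q^d - q^{d-1} + a_d + q\,a_{d-1} + \sum_{i=1}^{d-2} a_i\, q^{d-i}\bigl(1-q^{-1}\bigr), \]
which coincides with the paper's formula in its case $\sigma(\frakp)=\frakp$; the generating-function identity shows the same formula holds in the split case as well (the paper's separate split-case computation, based on $C_d(S)=C_d(\emptyset)-C_{d-1}(\emptyset)$, is actually inconsistent with Corollary~\ref{hSexplicitdescription}, so your unified derivation is the safer one). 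The slip is harmless for the theorem: your final estimates use only that the stray term is at most $2q^{d-1}$ in absolute value and that the coefficients $1-q^{-j}$ lie in $(0,1)$, and both facts hold for the corrected expansion. With the coefficient fixed, the proof is complete and correct.
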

  
  For the proof, we need a rather technical lemma.
  
  \begin{lemma}
    Let $S = \{ \frakp \}$ with $\deg \frakp = 1$.
    \begin{enuma}
      \item If $\sigma(\frakp) = \frakp$,
      \begin{align*}
        C_0(S) ={} & 1, \\
        C_1(S) ={} & q + a_1 \\ \text{and} \qquad
        C_d(S) ={} & \sum_{i=0}^{d-2} a_i q^{d - i} (1 - q^{-1}) + a_{d-1} q + a_d
      \end{align*}
      for $d \in \{ 2, \dots, 2 g \}$.
      \item If $\sigma(\frakp) \neq \frakp$,
      \begin{align*}
        C_0(S) ={} & 1, \\
        C_1(S) ={} & q + a_1, \\
        C_2(S) ={} & q^2 (1 - q^{-1} + q^{-2}) + a_1 q + a_2, \\
        C_3(S) ={} & q^3 (1 - q^{-1} - q^{-2}) + a_1 q^2 (1 - q^{-1} - q^{-2}) + a_2 q + a_3 \\ \text{and} \quad
        C_d(S) ={} & \sum_{i=0}^{d-4} a_i q^{d - i} (1 - q^{-1} - q^{-2} + q^{-3}) + a_{d-3} q^3 (1 -
        q^{-1} - q^{-2}) \\
        {}+{} & a_{d-2} q^2 (1 - q^{-1} - q^{-2}) + a_{d-1} q + a_d.
      \end{align*}
      for $d \in \{ 4, \dots, 2 g \}$.
    \end{enuma}
  \end{lemma}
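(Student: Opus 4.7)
The plan is to extract the values $C_d(\{\frakp\})$ by first expressing $h_{\{\frakp\}}(t)$ as an explicit rational function of $t$, using Proposition~\ref{SSprimerelation}, and then combining with the formula $(1-qt)\,h_\emptyset(t) = (1+t)(1-qt^2)L_K(t)$ already established in Section~\ref{countingreddivs}. In both cases this converts the generating function identity into a polynomial identity from which the $C_d(\{\frakp\})$ can be read off by coefficient comparison.

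For case (a), $\sigma(\frakp)=\frakp$ and, since $\deg\frakp=1$ rules out the inert possibility, $\frakp\cap R$ ramifies. Thus Proposition~\ref{SSprimerelation}(b), applied with $S=\emptyset$ and $S'=\{\frakp\}$, gives
\[ h_{\{\frakp\}}(t) \;=\; \frac{h_\emptyset(t)}{1+t}, \quad\text{so}\quad (1-qt)\,h_{\{\frakp\}}(t) \;=\; (1-qt^2)\,L_K(t), \]
a polynomial identity of degree $2g+2$. Expanding both sides as power series and comparing the coefficients of $t^d$ yields the two-term recursion
\[ C_d(\{\frakp\}) \;-\; q\,C_{d-1}(\{\frakp\}) \;=\; a_d - q\,a_{d-2}, \]
with initial data $C_0=1$, $C_1=q+a_1$. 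A routine induction on $d$ then telescopes this into the sum given in part~(a).

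For case (b), $\sigma(\frakp)\neq\frakp$ and $\frakp\cap R$ splits, and no single clause of Proposition~\ref{SSprimerelation} relates $\emptyset$ directly to $\{\frakp\}$; I would therefore interpose $S'=\{\frakp,\sigma(\frakp)\}$. Clause (a) of that proposition gives $h_{S'}(t)=\tfrac{1-t}{1+t}\,h_\emptyset(t)$, while clause (d), applied with $\frakp_1:=\sigma(\frakp)$ (so $\sigma(\frakp_1)=\frakp\in\{\frakp\}$), gives $h_{S'}(t)=(1-t)\,h_{\{\frakp\}}(t)$; eliminating $h_{S'}(t)$ and clearing $1-qt$ yields a polynomial identity for $(1-qt)h_{\{\frakp\}}(t)$ whose right-hand side is of higher degree than in case~(a). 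Coefficient comparison again produces a recursion, and induction unfolds the three closed forms for $d=2$, $d=3$, and $d\ge 4$ recorded in the statement. The main obstacle I anticipate is the bookkeeping here: the richer right-hand side, combined with the geometric series expansion of $1/(1-qt)$, gives a double sum in $a_i$ and $q^{d-i}$ which must be reorganized into the polynomial-in-$q^{-1}$ form of the lemma. The manipulation is mechanical, but particular care is required at the small-$d$ boundary where the low-degree terms of $L_K(t)$ enter asymmetrically, forcing the separate formulas for $d\in\{2,3\}$ rather than a single uniform expression.
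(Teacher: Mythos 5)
Your treatment of case (a) is correct and is essentially the paper's own argument: the paper applies Proposition~\ref{SSprimerelation}(b) to get $C_d(\emptyset) = C_d(S) + C_{d-1}(S)$ and then substitutes the explicit values of $C_d(\emptyset)$ from Corollary~\ref{hatCexplicit}; that is exactly the two-term recursion you read off from $(1-qt)\,h_{\{\frakp\}}(t) = (1-qt^2)L_K(t)$, and the induction closes as you describe.

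Case (b) is where the problem lies. Carry your elimination one step further: from $h_{S'}(t) = \frac{1-t}{1+t}h_\emptyset(t)$ and $h_{S'}(t) = (1-t)\,h_{\{\frakp\}}(t)$ you get $h_{\{\frakp\}}(t) = \frac{1}{1+t}h_\emptyset(t)$ --- the factor $1-t$ cancels, so the polynomial identity you arrive at is $(1-qt)\,h_{\{\frakp\}}(t) = (1-qt^2)L_K(t)$, literally the same identity as in case (a); your remark that the right-hand side ``is of higher degree'' is not correct. Consequently your recursion returns the case-(a) formulas verbatim and cannot produce the different closed forms asserted in part (b), so the claim that induction ``unfolds the three closed forms recorded in the statement'' is unsupported. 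The paper's own proof of (b) takes a different route: it asserts $C_d(\emptyset) = \sum_{i=0}^d C_i(S)$, i.e.\ $h_{\{\frakp\}}(t) = (1-t)h_\emptyset(t)$, and sets $C_d(S) = C_d(\emptyset) - C_{d-1}(\emptyset)$; this is incompatible with the relation you derived (which, for what it is worth, agrees with Corollary~\ref{hSexplicitdescription}, since for $S=\{\frakp\}$ with $\sigma(\frakp)\notin S$ one has $\ell_1=n_1=0$, $m_1=1$, giving the factor $(1+t)^{-1}$). A direct check in genus one --- counting effective degree-two divisors with $\nu_\frakp(D)=0$ that dominate no fibre of $K/R$ --- gives $C_2(S)=q^2-q+a_1q+a_2$, which matches your relation and neither the paper's recursion nor the displayed $C_2(S)=q^2(1-q^{-1}+q^{-2})+a_1q+a_2$. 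So the gap is real but not only yours: your method is sound, yet it proves something other than statement (b) as written, and you should have noticed the cancellation (and tested a small case) rather than asserting agreement with the target formulas.
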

  
  \begin{proof}\hfill
    \begin{enuma}
      \item By Proposition~\ref{SSprimerelation}~(b), $C_d(\emptyset) = C_d(S) + C_{d-1}(S)$, whence
      $C_0(S) = C_0(\emptyset)$ and, for $d > 0$, $C_d(S) = C_d(\emptyset) - C_{d-1}(S)$. Hence,
      with Corollary~\ref{hatCexplicit}, one obtains
      \begin{align*}
        C_0(S) ={} & 1, \\
        C_1(S) ={} & q + a_1, \\
        C_2(S) ={} & q^2 - q + a_1 q + a_2 \\ \text{and} \qquad
        C_3(S) ={} & q^3 - q^2 + a_1 q^2 (1 - q^{-1}) + a_2 q + a_3.
      \end{align*}
      For $d > 3$, using induction and Corollary~\ref{hatCexplicit}, we see that
      \begin{align*}
        & C_d(S) = C_d(\emptyset) - C_{d-1}(S) \\
        {}={} & \left[ q^d + \sum_{i=1}^{d-3} a_i q^{d - i} (1 - q^{-2}) + a_{d-2} q^2 + a_{d-1} q
        (1 + q^{-1}) + a_d - q^{d - 2} \right] \\
        {}-{} & \left[ \sum_{i=0}^{d-3} a_i q^{d - 1 - i} (1 - q^{-1}) + a_{d-2} q + a_{d-1} \right]
        \\
        {}={} & \sum_{i=0}^{d-2} a_i q^{d - i} (1 - q^{-1}) + a_{d-1} q + a_d,
      \end{align*}
      what we had to show.
      \item By Proposition~\ref{SSprimerelation}~(a), $C_d(\emptyset) = \sum_{i=0}^d C_i(S)$, whence
      $C_0(S) = C_0(\emptyset)$ and $C_d(S) = C_d(\emptyset) - C_{d-1}(\emptyset)$ for $d \in \{ 1,
      \dots, g \}$. Hence, with Corollary~\ref{hatCexplicit},
      \begin{align*}
        C_0(S) ={} & 1, \\
        C_1(S) ={} & q + a_1, \\
        C_2(S) ={} & q^2 (1 - q^{-1} + q^{-2}) + a_1 q + a_2, \\
        C_3(S) ={} & q^3 (1 - q^{-1} - q^{-2}) + a_1 q^2 (1 - q^{-1} - q^{-2}) + a_2 q + a_3, \\
        C_4(S) ={} & q^4 (1 - q^{-1} + q^{-3}) + a_1 q^3 (1 - q^{-1} + q^{-3}) \\
        {}+{} & a_2 q^2 (1 - q^{-1}) + a_3 q + a_4.
      \end{align*}
      For $d > 4$,
      \begin{align*}
        C_d(S) ={} & C_d(\emptyset) - C_{d-1}(\emptyset) \\
        {}={} & \sum_{i=0}^{d-4} a_i q^{d - i} (1 - q^{-1} - q^{-2} + q^{-3}) + a_{d-3} q^3 (1 -
        q^{-1} - q^{-2}) \\
        {}+{} & a_{d-2} q^2 (1 - q^{-1} - q^{-2}) + a_{d-1} q + a_d.
      \end{align*}
      \qedhere
    \end{enuma}
  \end{proof}
  
  \begin{proof}[Proof of Theorem~\ref{boundsonC(frakp)}.]
    Note that both for $\sigma(\frakp) = \frakp$ and $\sigma(\frakp) \neq \frakp$, one quickly
    obtains from the Lemma that
    \begin{align*}
      \abs{C_d(S)} \le{} & \sum_{i=0}^d q^{d - i} \abs{a_i} \\ \text{and} \qquad 
      \abs{C_d(S) - q^d} \le{} & 2 q^{d-1} + \sum_{i=1}^d q^{d - i} \abs{a_i}
    \end{align*}
    using $0 < 1 - q^{-1} < 1$, $0 < 1 - q^{-1} + q^{-2} < 1$, $0 < 1 - q^{-1} - q^{-2} < 1$, $0 < 1
    - q^{-1} - q^{-2} + q^{-3} < 1$, $0 < 1 + q^{-1} < 2$ and $0 < 1 + q^{-1} - q^{-2} < 2$. Now
    $q^{d - i} \abs{a_i} \le \binom{2 g}{i} q^{d - i/2}$ by the Hasse-Weil bounds, whence we can
    conclude.
  \end{proof}
  
  In this section, we have shown that $h_S(t)$ is rational for every finite set of
  places~$S$. Moreover, we have given bounds on the coefficients of $h_{\{ \frakp \}}(t)$ for some
  place~$\frakp$ of degree~one. In the next section, we will combine these bounds with
  Corollary~\ref{reddifferenceformula} to obtain our first two main results.
  
  \section{Counting the Number of Hole Elements}
  \label{holecountingsect}
  
  The first main result gives an explicit bound on how much the number of hole elements deviates
  from $(\abs{S_1} - 1) q^{g - 1}$. The dominant part of the error term turns out to be $2 g
  (\abs{S} - 1) q^{g - 3/2}$, which reminds of the Hasse-Weil bound on the divisor class group:
  namely, we have $\abs{\Pic^0(K)} \in [(\sqrt{q} - 1)^{2 g}, (\sqrt{q} + 1)^{2 g}]$ and $(\sqrt{q}
  \pm 1)^{2 g} = q^g \pm 2 g q^{g - 1/2} + \dots$.
  
  \begin{theorem}
    \label{mainresult1}
    Let $S$ be a finite set of places of $K$, containing a place of degree~one and no places lying
    over inert places of $R$. Assume that $g \ge 1$ and $q^{1/2} > \abs{S} + g$. We have
    \begin{align*}
      & \abs{\abs{\Red_\frakp(K) \setminus \Red_S(K)} - (\abs{S_1} - 1) q^{g-1}} \\
      {}\le{} & 2 g (\abs{S_1} - 1) q^{g - 3/2} + 2^{2 g} g^{g - 1} (\abs{S} - 1)^2 q^{g - 2}.
    \end{align*}
  \end{theorem}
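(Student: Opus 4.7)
The plan is to start from Corollary~\ref{reddifferenceformula}, which already isolates the main term $(\abs{S_1}-1)C_{g-1}(\{\frakp\})$ and controls the remaining deviation by $\sum_{j=0}^{g-2}(g-j)(\abs{S}-1)^{g-j}C_j(\{\frakp\})$. A single application of the triangle inequality replaces $C_{g-1}(\{\frakp\})$ by $q^{g-1}$, reducing the task to bounding two quantities: the ``replacement error'' $(\abs{S_1}-1)\,\abs{C_{g-1}(\{\frakp\})-q^{g-1}}$, which should yield the dominant contribution $2g(\abs{S_1}-1)q^{g-3/2}$, and the tail sum above, which should fit into the $q^{g-2}$ bucket.

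For the replacement error, I would invoke the second bound of Theorem~\ref{boundsonC(frakp)} at $d=g-1$, namely $\abs{C_{g-1}(\{\frakp\})-q^{g-1}}\le 2q^{g-2}+\sum_{i=1}^{g-1}\binom{2g}{i}q^{g-1-i/2}$. The $i=1$ term contributes exactly $2g\,q^{g-3/2}$, which after multiplication by $(\abs{S_1}-1)$ produces the clean first piece of the target bound. The remaining terms ($i\ge 2$, together with the $2q^{g-2}$) are all of order $q^{g-2}$ up to a $\binom{2g}{i}\le 2^{2g}$ factor, so I push them into the second error bucket.

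For the tail sum I apply the first bound of Theorem~\ref{boundsonC(frakp)}, $C_j(\{\frakp\})\le\sum_{i=0}^j\binom{2g}{i}q^{j-i/2}\le 2^{2g}q^j$, where the second inequality uses $\sum_{i=0}^{2g}\binom{2g}{i}=2^{2g}$ and $q^{-i/2}\le 1$. Reindexing by $k=g-j\in\{2,\ldots,g\}$ turns the whole sum into $2^{2g}(\abs{S}-1)^2 q^{g-2}\sum_{k=2}^{g} k\bigl((\abs{S}-1)/q\bigr)^{k-2}$. The hypothesis $q^{1/2}>\abs{S}+g$ forces $(\abs{S}-1)/q<q^{-1/2}<1/(g+1)$, so the inner geometric-type sum is bounded by a modest function of $g$, and the very generous factor $g^{g-1}$ in the target comfortably swallows it together with the absorbed leftovers from the replacement error.

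The main obstacle is purely bookkeeping: the hypothesis $q^{1/2}>\abs{S}+g$ is essentially what is needed to make the tail geometric sums converge uniformly, so there is little room to weaken it. Conceptually nothing is delicate, but one has to be careful to track which Hasse--Weil contribution lands in which bucket and to choose the crude estimates ($\binom{2g}{i}\le 2^{2g}$, geometric series summed to absolute constants) loosely enough that the stated form with $g^{g-1}$ absorbs every residual constant without having to split off the small genus cases $g=1,2$ by hand.
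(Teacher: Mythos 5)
Your decomposition is exactly the paper's: Corollary~\ref{reddifferenceformula} plus a triangle inequality splitting the error into $(\abs{S_1}-1)\abs{C_{g-1}-q^{g-1}}$ (handled via Theorem~\ref{boundsonC(frakp)}, with the $i=1$ Hasse--Weil term giving $2g q^{g-3/2}$) and the tail $\sum_{j\le g-2}(g-j)(\abs{S}-1)^{g-j}C_j$. The only divergence is in how the tail is estimated: the paper keeps $\binom{2g}{i}\le\binom{j}{i}\frac{(2g)!}{(g+2)!}$ and sums a geometric series in $(q+q^{1/2})/(\abs{S}-1)$, arriving at $2^{2g-3}g^{g-1}(\abs{S}-1)^2q^{g-2}$, whereas you use the cruder $C_j\le 2^{2g}q^j$ and a series in $(\abs{S}-1)/q$.

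One concrete piece of your bookkeeping does fail: the claim that the constants close ``without having to split off the small genus cases $g=1,2$.'' For $g=2$ your target budget is $2^{2g}g^{g-1}(\abs{S}-1)^2q^{g-2}=2\cdot 2^{2g}(\abs{S}-1)^2$, but the tail consists solely of the $j=0$ term $2(\abs{S}-1)^2C_0$, and your blanket bound $C_0\le 2^{2g}q^0$ turns this into $2\cdot 2^{2g}(\abs{S}-1)^2$ --- already the entire budget, with the strictly positive leftover $2(\abs{S_1}-1)q^{g-2}+\cdots$ from the replacement error still to be absorbed. The fix is trivial (use $C_0=1$, which the crude bound inflates by a factor $2^{2g}$), but as written the $g=2$ case does not go through. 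Similarly, at $g=1$ you need to observe $C_0=1=q^0$ directly, since Theorem~\ref{boundsonC(frakp)} is only stated for $d\ge 1$; the paper instead disposes of $g=1$ separately via Proposition~\ref{genus1prop}. With those two small repairs your argument is complete and matches the paper's.
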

  
  For the proof, we need two technical lemmata. We also use the abbreviation $C_d := C_d(\{ \frakp
  \})$.
  
  \begin{lemma}
    Assume that $g \ge 2$ and that $q^{1/2} \ge \max\{ \abs{S} - 1, 2 \}$. We then have
    \begin{align*}
      & \abs{\abs{\Red_\frakp(K) \setminus \Red_S(K)} - (\abs{S_1} - 1) C_{g-1}} \le 2^{2 g - 2}
      g^{g - 1} (\abs{S} - 1)^2 q^{g - 2}.
    \end{align*}
  \end{lemma}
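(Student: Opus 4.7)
The plan is to start from Corollary~\ref{reddifferenceformula}, which already reduces the desired inequality to bounding
\[ T := \sum_{j=0}^{g-2}(g-j) s^{g-j} C_j \]
by $2^{2g-2} g^{g-1} s^2 q^{g-2}$, where $s := \abs{S} - 1$ and $C_j := C_j(\{ \frakp \})$. The hypothesis $q^{1/2} \ge \max\{s, 2\}$ translates into the two ingredients $s \le q^{1/2}$ and $q \ge 4$, both of which I will use freely.

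The first step is to substitute the bound from Theorem~\ref{boundsonC(frakp)}, namely $C_j \le \sum_{i=0}^j \binom{2g}{i} q^{j - i/2}$. Using the crude but convenient estimate $\binom{2g}{i} \le \binom{2g}{g} \le 4^g$ together with the geometric bound $\sum_{i \ge 0} q^{-i/2} \le 1/(1 - q^{-1/2}) \le 2$ (valid since $q \ge 4$), this collapses to $C_j \le 2 \cdot 4^g q^j$.

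The second step is to extract the factor $s^2 q^{g-2}$ from the sum. Writing $s^{g-j} = s^2 \cdot s^{g-j-2}$ and using $s^{g-j-2} \le q^{(g-j-2)/2}$ (valid since $s \le q^{1/2}$ and $j \le g - 2$), the reindexing $m := g - 2 - j$ yields
\[ T \le 2 \cdot 4^g s^2 q^{g-2} \sum_{m=0}^{g-2} (m+2) q^{-m/2}. \]
The inner sum is bounded by $\sum_{m=0}^\infty (m+2) 2^{-m} = 6$, so $T \le 12 \cdot 4^g s^2 q^{g-2}$.

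The main obstacle is that the bound $12 \cdot 4^g$ matches $2^{2g-2} g^{g-1} = 4^{g-1} g^{g-1}$ only when $g^{g-1} \ge 48$, i.e.\ for $g \ge 4$. The cases $g = 2$ and $g = 3$ then require a direct verification: when $g = 2$ the sum $T$ collapses to $2 s^2$, which is well below $8 s^2$; when $g = 3$ one plugs in the explicit bound $C_1 \le q + 6\sqrt{q}$ from Theorem~\ref{boundsonC(frakp)} together with $\sqrt{q} \le q/2$ (from $q \ge 4$) to bring $T$ safely below the target $144 s^2 q$.
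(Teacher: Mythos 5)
Your proof is correct. It rests on the same two pillars as the paper's argument -- Corollary~\ref{reddifferenceformula} to reduce everything to bounding $T=\sum_{j=0}^{g-2}(g-j)(\abs{S}-1)^{g-j}C_j$, and Theorem~\ref{boundsonC(frakp)} for the $C_j$ -- but the way you tame the binomial sum is genuinely different. The paper exploits the inequality $\binom{2g}{i}\le\binom{j}{i}\tfrac{(2g)!}{(g+2)!}$ for $i\le j\le g-2$, which folds the inner sum into $\tfrac{(2g)!}{(g+2)!}(q+q^{1/2})^j$; summing the resulting geometric series in $\tfrac{q+q^{1/2}}{\abs{S}-1}$ then yields the bound $2^{2g-3}g^{g-1}(\abs{S}-1)^2q^{g-2}$ uniformly for all $g\ge 2$, with no case distinction (and in fact a factor of $2$ to spare against the stated constant). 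Your cruder estimate $\binom{2g}{i}\le 4^g$ together with the geometric bounds is simpler to execute, but it costs you the factor $g^{g-1}/48$, so you must verify $g=2$ and $g=3$ by hand; your checks there ($T=2s^2\le 8s^2$ for $g=2$, and $T\le 3s^3+2s^2(q+6\sqrt q)\le 9.5\,s^2q\le 144\,s^2q$ for $g=3$) are correct. The trade-off is a more elementary but less uniform argument; both are valid proofs of the lemma.
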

  
  \begin{proof}
    For $\abs{S} = 1$ there is nothing to show; hence, assume that $\abs{S} > 1$. Note that $\abs{S}
    - 1 \le q^{1/2} \le \tfrac{1}{2} q$ (as $q \ge 4$) gives $q - (\abs{S} - 1) \ge \frac{1}{2} q$
    and $q^{1/2} + (\abs{S} - 1) \le 2 q^{1/2}$. We show the result in three steps.
    \begin{enumi}
      \item Clearly, for $0 \le i \le j \le g - 2$, \[ \binom{2 g}{i} = \binom{j}{i} \frac{(2 g)! (j
      - i)!}{(2 g - i)! j!} \le \binom{j}{i} \frac{(2 g)! j!}{(2 g - j)! j!} \le \binom{j}{i}
      \frac{(2 g)!}{(g + 2)!}. \] We have
      \begin{align*}
        \sum_{i=0}^j \binom{2 g}{i} q^{j - i/2} \le{} & \frac{(2 g)!}{(g + 2)!} q^j \sum_{i=0}^j
        \binom{j}{i} q^{-i/2} = \frac{(2 g)!}{(g + 2)!} (q + q^{1/2})^j.
      \end{align*}
      \item We have 
      \begin{align*}
        & \sum_{j=0}^{g-2} (g - j) (\abs{S} - 1)^{g - j} \sum_{i=0}^j \binom{2 g}{i} q^{j-i/2} \\
        \overset{(i)}{{}\le{}} & g \frac{(2 g)!}{(g + 2)!} (\abs{S} - 1)^g \sum_{j=0}^{g-2}
        \left(\frac{q + q^{1/2}}{\abs{S} - 1}\right)^j \\
        {}={} & g \frac{(2 g)!}{(g + 2)!} (\abs{S} - 1)^g \frac{\left(\frac{q + q^{1/2}}{\abs{S} -
        1}\right)^{g-1} - 1}{\frac{q + q^{1/2}}{\abs{S} - 1} - 1} \\
        {}\le{} & g (2 g)^{g - 2} (\abs{S} - 1)^2 \frac{q^{g-1} (1 + q^{-1/2})^{g-1}}{q +
        q^{1/2} + 1 - \abs{S}} \\
        {}\le{} & 2^{g - 2} g^{g - 1} (\abs{S} - 1)^2 \frac{q^{g-1} 2^{g-1}}{q} = 2^{2 g - 3} g^{g -
        1} (\abs{S} - 1)^2 q^{g-2}.
      \end{align*}
      \item By Theorem~\ref{boundsonC(frakp)}, we have
      \begin{align*}
        & \abs{\abs{\Red_\frakp(K) \setminus \Red_S(K)} - (\abs{S_1} - 1) C_{g-1}} \\
        {}\le{} & \sum_{j=0}^{g-2} (g - j) (\abs{S} - 1)^{g - j} \sum_{i=0}^j \binom{2 g}{i} q^{j -
        i/2} \overset{(ii)}{{}\le{}} 2^{2 g - 3} g^{g - 1} q^{g - 2} (\abs{S} - 1)^2.
      \end{align*}
      \qedhere
    \end{enumi}    
  \end{proof}
  
  \begin{lemma}
    Assume that $g \ge 2$ and that $q^{1/2} > \abs{S} + g$. We then have
    \begin{align*}
      & \abs{C_{g-1} - q^{g-1}} \le 2 g q^{g - 3/2} + 10 g^{g - 3} 2^{2 g - 2} q^{g-2}.
    \end{align*}
  \end{lemma}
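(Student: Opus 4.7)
The plan is to apply Theorem~\ref{boundsonC(frakp)} directly with $d = g - 1$ (which is legitimate because $g \ge 2$ implies $1 \le d \le g$), and then split the resulting bound into the leading $i = 1$ term plus a tail. The theorem yields
\[
\abs{C_{g-1} - q^{g-1}} \le 2 q^{g-2} + \sum_{i=1}^{g-1} \binom{2g}{i} q^{g-1-i/2}.
\]
The $i = 1$ summand is $\binom{2g}{1} q^{g-3/2} = 2g \cdot q^{g - 3/2}$, which is exactly the first term in the claimed bound.

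For the tail $i \ge 2$, I would use the two crude estimates $q^{g-1-i/2} \le q^{g-2}$ (valid for every $i \ge 2$ since $q \ge 1$) and $\sum_{i=2}^{g-1} \binom{2g}{i} \le \sum_{i=0}^{2g} \binom{2g}{i} = 2^{2g}$, which together give
\[
\sum_{i=2}^{g-1} \binom{2g}{i} q^{g-1-i/2} \le 2^{2g} \cdot q^{g-2}.
\]
Combined with the $2 q^{g-2}$ summand from the theorem, the total residual (beyond the $2 g q^{g-3/2}$ piece) is at most $(2 + 2^{2g}) q^{g-2}$. It then suffices to verify the purely numerical inequality $2 + 2^{2g} \le 10 g^{g-3} \cdot 2^{2g-2}$ for every $g \ge 2$. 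Dividing through by $2^{2g-2}$, this becomes $4 + 8 \cdot 4^{-g} \le 10 g^{g-3}$, whose boundary case $g = 2$ reads $4.5 \le 5$ and which is then comfortably preserved since for $g \ge 3$ the right-hand side already exceeds $10$ while the left-hand side stays below $5$.

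This is essentially bookkeeping once Theorem~\ref{boundsonC(frakp)} is in hand; no new idea is required. The only minor design choice is to peel off exactly the $i = 1$ term, so that the sharp $q^{g-3/2}$ contribution is preserved, while deliberately sacrificing sharpness on the remaining terms by collapsing each power $q^{g-1-i/2}$ to the single worst case $q^{g-2}$. There is no real obstacle; the standing hypothesis $q^{1/2} > \abs{S} + g$ is not directly invoked here but is kept because it will be needed in the ambient argument of Theorem~\ref{mainresult1}.
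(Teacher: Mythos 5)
Your proof is correct and follows essentially the same route as the paper: both apply Theorem~\ref{boundsonC(frakp)} at $d = g-1$, peel off the $i=1$ term to obtain $2g\,q^{g-3/2}$, and absorb the remaining terms into a multiple of $q^{g-2}$. The only difference is that you bound the tail crudely via $\binom{2g}{i} \le 2^{2g}$ and $q^{g-1-i/2} \le q^{g-2}$, whereas the paper uses a sharper binomial comparison $\binom{2g}{i} \le \frac{(2g)!}{(g-1)(g-2)(g+1)!}\binom{g-1}{i}$; your cruder estimate still lands under the stated constant $10\,g^{g-3}2^{2g-2}$, as your numerical check confirms.
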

  
  \begin{proof}
    Note that $q^{1/2} > \abs{S} + g \ge 3$, i.e. $q > 9$.
    
    First, assume that $g \ge 3$. Note that for $2 \le i \le g - 1$, \[ \binom{2 g}{i} = \binom{g -
    1}{i} \frac{(g - 1 - i)! (2 g)!}{(g - 1)! (2 g - i)!} \le \frac{(2 g)!}{(g - 1) (g - 2) (g +
    1)!} \binom{g - 1}{i}, \] whence
    \begin{align*}
      \sum_{i=1}^{g-1} \binom{2 g}{i} q^{g-1-i/2} \le{} & \frac{(2 g)!}{(g - 1) (g - 2) (g + 1)!}
      q^{g-1} \sum_{i=2}^{g-1} \binom{g - 1}{i} q^{-i/2} \\
      {}\le{} & \frac{9 (2 g)! \cdot 2^{g-1}}{g^2 (g + 1)!} q^{g-2}.
    \end{align*}
    In case $g = 2$, this term is non-negative. Thus, using Theorem~\ref{boundsonC(frakp)},
    \begin{align*}
      \abs{C_{g-1} - q^{g-1}} \le{} & 2 g q^{g - 3/2} + 2 q^{g - 2} + \sum_{i=2}^{g-1} \binom{2
      g}{i} q^{g - 1 - i/2} \\
      {}\le{} & 2 g q^{g - 3/2} + \left[ 2 + \frac{9 (2 g)! \cdot 2^{g-1}}{g^2 (g + 1)!} \right]
      q^{g-2}.
    \end{align*}
    Using $\frac{(2 g)!}{(g + 1)!} \le g^{g - 1} 2^{g - 1}$, we obtain the bound $2 g q^{g - 3/2} +
    10 g^{g - 3} 2^{2 g - 2} q^{g-2}$.
  \end{proof}
  
  \begin{proof}[Proof of Theorem~\ref{mainresult1}.]
    First, for $g = 1$ or $\abs{S} = 1$ \[ \abs{\abs{\Red_\frakp(K) \setminus \Red_S(K)} -
    (\abs{S_1} - 1)} = 0 \] by Proposition~\ref{genus1prop} in case~$g = 1$ respectively the
    definition of $\Red_S(K)$ if $\abs{S} = 1$. Hence, assume that $g > 1$ and $\abs{S} \ge 2$. We
    have
    \begin{align*}
      & \abs{\abs{\Red_\frakp(K) \setminus \Red_S(K)} - (\abs{S_1} - 1) q^{g-1}} \\
      {}\le{} & \abs{\abs{\Red_\frakp(K) \setminus \Red_S(K)} - (\abs{S_1} - 1) C_{g-1}} +
      (\abs{S_1} - 1) \abs{C_{g-1} - q^{g-1}}.
    \end{align*}
    Now, with the two lemmata, this can be bounded by
    \begin{align*}
      & 2 g (\abs{S_1} - 1) q^{g - 3/2} + 2^{2 g - 3} g^{g - 1} (\abs{S} - 1)^2 q^{g - 2} + 10 g^{g
      - 3} 2^{2 g - 2} (\abs{S_1} - 1) q^{g-2} \\
      {}\le{} & 2 g (\abs{S_1} - 1) q^{g - 3/2} + 2^{2 g} g^{g - 1} (\abs{S} - 1)^2 q^{g - 2}
    \end{align*}
    as $2^{-1} + 10 g^{-2} (\abs{S} - 1)^{-1} \le 2$.
  \end{proof}
  
  Using this theorem, we can also prove our second main result which states that the probability of
  ``stepping into a hole'', i.e. that a random element of $\Red_\frakp(K)$ lies in $\Red_S(K)$,
  equals $\frac{\abs{S} - 1}{q}$, with an error of $\calO(16^g (\abs{S} - 1) q^{-3/2})$:
  
  \begin{corollary}
    \label{mainresult2}
    Assume that $g \ge 1$ and let $S$ be as in Theorem~\ref{mainresult1}. For $q \to \infty$, we
    have \[ \abs{\frac{\abs{\Red_\frakp(K) \setminus \Red_S(K)}}{\abs{\Red_\frakp(K)}} -
    \frac{\abs{S_1} - 1}{q}} = \calO(16^g (\abs{S} - 1) q^{-3/2}). \]
  \end{corollary}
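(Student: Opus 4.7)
The plan is to derive the asymptotic from Theorem~\ref{mainresult1} combined with Hasse--Weil bounds on the divisor class number. Writing $A := \abs{\Red_\frakp(K) \setminus \Red_S(K)}$, $B := \abs{\Red_\frakp(K)} = \abs{\Pic^0(K)} = L_K(1)$, and $C := \abs{S_1} - 1$, I would start from the algebraic identity
\[
  \frac{A}{B} - \frac{C}{q} \;=\; \frac{A - C q^{g-1}}{B} \;+\; \frac{C (q^g - B)}{q B},
\]
which cleanly splits the quantity into a piece directly governed by Theorem~\ref{mainresult1} and a piece measuring how much $\abs{\Pic^0(K)}$ deviates from $q^g$.

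Next I would control the denominator: the Hasse--Weil factorization $L_K(t) = \prod_{i=1}^{2g}(1 - \alpha_i t)$ with $\abs{\alpha_i} = q^{1/2}$ (cf.\ Proposition~\ref{hasseweilbounds}) gives $B \ge (q^{1/2} - 1)^{2g}$, which under the standing hypothesis $q^{1/2} > \abs{S} + g$ exceeds $\tfrac{1}{2} q^g$ once $q$ is sufficiently large. For the second summand, since $q^g = a_{2g}$ is the leading coefficient of $L_K$, the difference $q^g - B$ equals $-\sum_{i=0}^{2g-1} a_i$, so Proposition~\ref{hasseweilbounds} yields $\abs{q^g - B} \le 2g\, q^{g - 1/2} + 2^{2g}\, q^{g-1}$ after isolating the $i = 2g-1$ term and crudely bounding the rest by $q^{g-1} \sum_i \binom{2g}{i}$.

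Plugging in, Theorem~\ref{mainresult1} makes the first summand at most $\calO(g C q^{-3/2}) + \calO(2^{2g} g^{g-1} (\abs{S}-1)^2 q^{-2})$, while the Hasse--Weil estimate bounds the second by $\calO(g C q^{-3/2}) + \calO(2^{2g} C q^{-2})$. Summing the two and absorbing every $g$-dependent constant into $16^g$ (valid for $g \ge 1$) produces the claimed $\calO(16^g (\abs{S} - 1) q^{-3/2})$ bound as $q \to \infty$.

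The main obstacle will be purely bookkeeping: keeping the two error sources aligned, verifying that $B \ge \tfrac{1}{2} q^g$ holds in the regime of interest, and checking that the $q^{-2}$ corrections carrying the awkward factor $g^{g-1}$ are genuinely subsumed by the leading $q^{-3/2}$ term as $q \to \infty$. Conceptually nothing new is needed beyond Theorem~\ref{mainresult1} and standard Hasse--Weil control of $\abs{\Pic^0(K)}$.
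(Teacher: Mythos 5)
Your proposal is correct and follows essentially the same route as the paper: the identity $\frac{A}{B}-\frac{C}{q}=\frac{A-Cq^{g-1}}{B}+\frac{C(q^g-B)}{qB}$ is exactly the paper's triangle-inequality split, the first piece is handled by Theorem~\ref{mainresult1} together with $B\ge(\sqrt{q}-1)^{2g}$, and the second by Hasse--Weil control of $\abs{q^g-\abs{\Pic^0(K)}}$. The only (cosmetic) difference is that you bound $\abs{q^g-L_K(1)}$ coefficient-wise via Proposition~\ref{hasseweilbounds}, where the paper expands $(1\pm q^{-1/2})^{2g}-1$; and your final absorption of the $2^{2g}g^{g-1}(\abs{S}-1)^2q^{-2}$ term into $16^g(\abs{S}-1)q^{-3/2}$ requires $q$ large relative to $g$, a point on which the paper's own proof is no more careful.
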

  
  \begin{proof}
    Assume that $q^{1/2} > \abs{S} + g$. Note that $\abs{\Red_\frakp(K)} \in [(\sqrt{q} - 1)^{2 g},
    (\sqrt{q} + 1)^{2 g}]$ by the Hasse-Weil bounds \cite[p.~287, Corollary~6.3 and
    Remark~6.4]{lorenzini}. Now
    \begin{align*}
      & \abs{\frac{\abs{\Red_\frakp(K) \setminus \Red_S(K)}}{\abs{\Red_\frakp(K)}} - \frac{\abs{S_1}
      - 1}{q}} \\
      {}\le{} & \abs{\frac{\abs{\Red_\frakp(K) \setminus \Red_S(K)}}{\abs{\Red_\frakp(K)}} -
      \frac{(\abs{S_1} - 1) q^{g - 1}}{\abs{\Red_\frakp(K)}}} \\
      {}+{} & \abs{\frac{(\abs{S_1} - 1) q^{g - 1}}{\abs{\Red_\frakp(K)}} - \frac{(\abs{S_1} - 1)
      q^{g - 1}}{q^g}} \\
      {}\le{} & \frac{q^g}{(\sqrt{q} - 1)^{2 g}} \left( 2 g (\abs{S_1} - 1) q^{-3/2} + 2^{2 g} g^{g
      - 1} (\abs{S} - 1)^2 q^{-2} \right) \\
      {}+{} & \frac{q^g}{(\sqrt{q} - 1)^{2 g}} (\abs{S_1} - 1) \frac{\abs{q^g -
      \abs{\Red_\frakp(K)}}}{q^{g+1}} \\
      {}\le{} & 2^{2 g + 1} g (\abs{S_1} - 1) q^{-3/2} + 2^{4 g} (\abs{S} - 1)^2 q^{-2} \\
      {}+{} & 2^{2 g} (\abs{S_1} - 1) \frac{\max\{ (1 + q^{-1/2})^{2 g} - 1, 1 - (1 - q^{-1/2})^{2
      g} \}}{q}
    \end{align*}
    using $\frac{\sqrt{q}}{\sqrt{q} - 1} < 2$ as $\sqrt{q} > 2$. Now \[ 1 - (1 - q^{-1/2})^{2 g} =
    \sum_{i=1}^{2 g} \binom{2 g}{i} (-1)^{i+1} q^{i/2} \le \sum_{i=1}^g \binom{2 g}{2 i - 1}
    q^{-1/2} < 4^g q^{-1/2} \] and, analogously, $(1 + q^{-1/2})^{2 g} - 1 < 4^g q^{-1/2}$, whence
    we obtain
    \begin{align*}
      & \abs{\frac{\abs{\Red_\frakp(K) \setminus \Red_S(K)}}{\abs{\Red_\frakp(K)}} - \frac{\abs{S_1}
      - 1}{q}} \\
      {}\le{} & 2^{2 g + 1} g (\abs{S_1} - 1) q^{-3/2} + 2^{4 g} (\abs{S} - 1)^2 q^{-2} + 2^{4 g}
      (\abs{S_1} - 1) q^{-3/2} \\
      {}\le{} & 2^{4 g} (2^{1 - 2 g} g + q^{-1/2} (\abs{S} - 1) + 1) (\abs{S} - 1) q^{-3/2} \\
      {}<{} & 2^{4 g + 2} (\abs{S} - 1) q^{-3/2}
    \end{align*}
    as $\abs{S_1} \le \abs{S}$, $2^{1 - 2 g} g < \frac{1}{2}$ and $q^{-1/2} (\abs{S} - 1) < 1$.
  \end{proof}
  
  Finally, we will give an explicit formula for $C_d(S)$ in the case that all places of $S$ have
  degree~one, i.e. $S = S_1$, and that $S \neq \emptyset$. For that, we compute the Taylor expansion
  of $h_S(t)$. For convenience, we set $\binom{-1}{0} := 1$.
  
  \begin{theorem}
    \label{mainresult3}
    Let $S = S_1$ be a finite, non-empty set of places of degree~one. Set
    \begin{itemize}
      \item $n = \abs{\{ \frakp \in S \mid \sigma(\frakp) = \frakp \}}$,
      \item $\ell = \frac{1}{2} \abs{\{ \frakp \in S \mid \frakp \neq \sigma(\frakp) \in S \}}$,
      \item $m = \abs{\{ \frakp \in S \mid \sigma(\frakp) \not\in S \}}$,
    \end{itemize}
    and let $L_K(t) = \sum_{i=0}^\infty a_i t^i$. Then
    \begin{align*}
      C_0(S) ={} & a_0 = 1, \\
      C_1(S) ={} & q - 2 \ell - m - n + 1 + a_1, \qquad \text{and for } i > 1, \\
      C_i(S) ={} & \sum_{k=0}^i \sum_{j=0}^k \sum_{p=0}^j \binom{\ell + m + n - 2 + i - k}{i - k}
      (-1)^{i - j} \binom{\ell}{k - j} q^p a_{j-p} \\
      {}-{} & \sum_{k=0}^{i-2} \sum_{j=0}^k \sum_{p=0}^j \binom{\ell + m + n - 4 + i - k}{i - k - 2}
      (-1)^{i - j} \binom{\ell}{k - j} q^{p+1} a_{j-p}.
    \end{align*}
  \end{theorem}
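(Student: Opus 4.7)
The plan is to apply Corollary~\ref{hSexplicitdescription} to $S = S_1$ and then read off the Taylor coefficients of $h_S(t)$ from its rational closed form. Since every place of $S$ has degree one, only the index $i = 1$ in the product of Corollary~\ref{hSexplicitdescription} contributes, and writing $\ell, m, n$ for $\ell_1, m_1, n_1$ we get
\[ h_S(t) = h_\emptyset(t) \cdot (1-t)^{\ell} (1+t)^{-\ell-n-m}. \]
Substituting the closed form $h_\emptyset(t) = (1+t)(1-qt^2) L_K(t)/(1-qt)$ established earlier and cancelling one factor of $(1+t)$ yields
\[ h_S(t) = \frac{(1-qt^2)(1-t)^{\ell} L_K(t)}{(1-qt)(1+t)^{\ell+n+m-1}}. \]

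The next step is to expand each factor as a formal power series and form the iterated Cauchy product. The geometric series gives $1/(1-qt) = \sum_{p \ge 0} q^p t^p$, so
\[ \frac{L_K(t)}{1-qt} = \sum_{j \ge 0} \biggl(\sum_{p=0}^j q^p a_{j-p}\biggr) t^j. \]
The binomial theorem and the generalized binomial series supply
\[ (1-t)^{\ell} = \sum_{r \ge 0} (-1)^r \binom{\ell}{r} t^r, \qquad (1+t)^{-(\ell+n+m-1)} = \sum_{s \ge 0} (-1)^s \binom{\ell+n+m-2+s}{s} t^s, \]
where the convention $\binom{-1}{0} = 1$ covers the edge case $\ell+n+m = 1$. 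Multiplying the three series and reindexing via $k = j + r$ and $s = i-k$ produces a triple sum over $0 \le p \le j \le k \le i$: the outer factor $\binom{\ell+n+m-2+i-k}{i-k}$ comes from the $(1+t)^{-(\ell+n+m-1)}$ layer, the factor $\binom{\ell}{k-j}$ comes from $(1-t)^{\ell}$, and the sign $(-1)^{i-j}$ is the product $(-1)^{i-k}(-1)^{k-j}$ of the two alternating signs.

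Applying the remaining prefactor $(1-qt^2)$ then subtracts $q$ times the degree-$(i-2)$ coefficient from the degree-$i$ coefficient. Using $(-1)^{(i-2)-j} = (-1)^{i-j}$ and $q\cdot q^p = q^{p+1}$, this subtracted piece is exactly the second triple sum in the statement, so the general formula for $i \ge 2$ falls out. The boundary cases are then a short check: for $i = 0$ only $(k,j,p) = (0,0,0)$ survives and gives $C_0(S) = a_0 = 1$; for $i = 1$ the four surviving triples $(0,0,0),(1,0,0),(1,1,0),(1,1,1)$ contribute $-(\ell+n+m-1)$, $-\ell$, $a_1$, $q$ respectively, summing to $q + a_1 + 1 - 2\ell - m - n$.

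The only real obstacle is combinatorial bookkeeping: ordering the four convolutions so that the outer index $i-k$ is paired with the generalized binomial coefficient, keeping the alternating signs aligned, and recognizing the $(1-qt^2)$ correction as precisely the second summation in the statement. No step beyond this requires genuinely new ideas; everything else is a routine manipulation of the closed forms already established.
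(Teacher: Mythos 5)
Your proposal is correct and follows essentially the same route as the paper: both start from the closed form $h_S(t) = (1-t)^{\ell}(1-qt^2)L_K(t)\,(1-qt)^{-1}(1+t)^{-(\ell+m+n-1)}$ (you additionally spell out its derivation from Corollary~\ref{hSexplicitdescription}), expand each factor via Lemma~\ref{binomialpowerseriesexpansion} and the geometric series, convolve, and treat the $(1-qt^2)$ prefactor as a degree-$(i-2)$ correction. Your direct check of the $i=0$ and $i=1$ coefficients matches the paper's computation of $b_0$ and $b_1$.
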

  
  We begin with a small lemma on the Taylor expansion on $(1 + \lambda t)^n$ with $n \in \Z$.
  
  \begin{lemma}
    \label{binomialpowerseriesexpansion}
    Let $\lambda \in \C^*$ and $n \in \N$.
    \begin{enuma}
      \item We have \[ (1 + \lambda t)^n = \sum_{i=0}^\infty \binom{n}{i} \lambda^i t^i. \]
      \item We have \[ \left( \frac{1}{1 + \lambda t} \right)^n = \sum_{i=0}^\infty \binom{i + n -
      1}{i} (-\lambda)^i t^i. \] (In case $n = 0$, we need $\binom{-1}{0} = 1$.)
    \end{enuma}
  \end{lemma}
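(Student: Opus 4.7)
The plan is to dispatch part~(a) in one step via the standard binomial theorem and then prove part~(b) by induction on~$n$, using part~(a) as the multiplication template and a hockey-stick identity as the combinatorial engine.

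For part~(a), since $n \in \N$ the expression $(1 + \lambda t)^n$ is a polynomial, so the binomial theorem gives the finite identity $(1+\lambda t)^n = \sum_{i=0}^n \binom{n}{i} \lambda^i t^i$. Because $\binom{n}{i} = 0$ for $i > n$, the sum may be extended to $\sum_{i=0}^\infty$ without altering its value, which is exactly the claim.

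For part~(b), I would induct on~$n$. The base case $n = 0$ checks directly: both sides equal~$1$, since the convention $\binom{-1}{0} = 1$ supplies the constant term on the right, and $\binom{i-1}{i} = 0$ for every $i \ge 1$ kills all higher terms. For $n = 1$, the formula reduces to the classical geometric series expansion $(1 + \lambda t)^{-1} = \sum_{i=0}^\infty (-\lambda)^i t^i$, matching since $\binom{i}{i} = 1$. For the inductive step, assuming the claim for some $n \ge 1$, I would write
\[
(1 + \lambda t)^{-(n+1)} \;=\; (1+\lambda t)^{-n} \cdot (1 + \lambda t)^{-1},
\]
multiply the two known power series as formal series, and collect the coefficient of~$t^i$. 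That coefficient evaluates to $(-\lambda)^i \sum_{k=0}^i \binom{k + n - 1}{k}$, and the hockey-stick identity $\sum_{k=0}^i \binom{k + n - 1}{k} = \binom{i + n}{i}$ — itself a short induction on~$i$ via Pascal's rule $\binom{a}{b-1} + \binom{a}{b} = \binom{a+1}{b}$ — converts this to $\binom{i + n}{i}(-\lambda)^i$, which is exactly the coefficient predicted by the formula at level $n + 1$.

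The statement is essentially elementary and I do not foresee any genuine obstacle; the only delicate point is the $n = 0$ boundary, which the stated convention $\binom{-1}{0} = 1$ is precisely designed to handle, and the manipulation of the Cauchy product, which is justified because we are working in $\C[[t]]$ and each coefficient is a finite sum.
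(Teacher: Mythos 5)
Your proof is correct. Part~(a) is handled exactly as in the paper (binomial theorem plus $\binom{n}{i}=0$ for $i>n$), but for part~(b) you take a genuinely different route. The paper avoids induction entirely: for $n>0$ it writes
\[
\left(\frac{1}{1+\lambda t}\right)^n = (-\lambda)^{1-n}\,\frac{d^{n-1}}{dt^{n-1}}\,\frac{1}{1+\lambda t},
\]
differentiates the geometric series term by term, reindexes, and finishes with $\binom{i+n-1}{n-1}=\binom{i+n-1}{i}$ --- a one-shot closed-form derivation whose only combinatorial input is the symmetry of binomial coefficients. You instead induct on $n$, peeling off one factor of $(1+\lambda t)^{-1}$ at a time, and the Cauchy product produces the convolution $\sum_{k=0}^i\binom{k+n-1}{k}$, which the hockey-stick identity collapses to $\binom{i+n}{i}$ as required at level $n+1$. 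Your argument stays entirely inside the formal power series ring and needs no (formal) differentiation, at the cost of carrying an auxiliary combinatorial identity; the paper's is shorter once one accepts termwise differentiation of $\C[[t]]$. Both treat the $n=0$ boundary identically via the convention $\binom{-1}{0}=1$, and your verification of that case and of the convolution coefficient is accurate.
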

  
  \begin{proof}
    Part (a) is clear since $\binom{n}{i} = 0$ for $i > n$. For part (b), the case $n = 0$ is clear
    since $\binom{-1}{0} = 1$ and $\binom{k - 1}{k} = 0$ for $k > 0$. For $n > 0$, we have
    \begin{align*}
      \left( \frac{1}{1 + \lambda t} \right)^n ={} & (-\lambda)^{1-n} \frac{d^{n - 1}}{d t^{n - 1}}
      \frac{1}{1 + \lambda t} = (-\lambda)^{1-n} \frac{d^{n - 1}}{d t^{n - 1}} \sum_{i=0}^\infty
      (-\lambda)^i t^i \\
      {}={} & (-\lambda)^{1 - n} \sum_{i=n - 1}^\infty \binom{i}{n - 1} (-\lambda)^i t^{i + 1 - n} =
      \sum_{i=0}^\infty \binom{i + n - 1}{n - 1} (-\lambda)^i t^i;
    \end{align*}
    finally, note that $\binom{i + n - 1}{n - 1} = \binom{i + n - 1}{i}$.
  \end{proof}
  
  \begin{proof}[Proof of Theorem~\ref{mainresult3}.]
    Note that \[ \sum_{i=0}^\infty C_i(S) t^i = \frac{(1 - t)^\ell (1 - q t^2) L_K(t)}{(1 - q t) (1
    + t)^{\ell + m + n - 1}}. \] Using the lemma, it suffices to compute the Taylor expansion of \[
    (1 - q t^2) \underbrace{\sum_{i=0}^\infty \binom{\ell}{i} (-1)^i t^i \cdot \sum_{j=0}^\infty q^j
    t^j \cdot \sum_{k=0}^\infty \binom{k + \ell + m + n - 2}{k} (-1)^k t^k \cdot \sum_{p=0}^\infty
    a_p t^p}_{=: A}, \] which can be obtained by multiplying out. First,
    \begin{align*}
      \sum_{i=0}^\infty \binom{\ell}{i} (-1)^i t^i \cdot \sum_{j=0}^\infty q^j t^j \cdot
      \sum_{p=0}^\infty a_p t^p ={} & \sum_{i=0}^\infty \binom{\ell}{i} (-1)^i t^i \cdot
      \sum_{j=0}^\infty \sum_{p=0}^j q^p a_{j-p} t^j \\
      {}={} & \sum_{i=0}^\infty \sum_{j=0}^i \sum_{p=0}^j \binom{\ell}{i - j} (-1)^{i - j} q^p
      a_{j-p} t^i.
    \end{align*}
    Using this, we obtain that $A$ equals
    \begin{align*}
      & \sum_{k=0}^\infty \binom{k + \ell + m + n - 2}{k} (-1)^k t^k \cdot \sum_{i=0}^\infty
      \sum_{j=0}^i \sum_{p=0}^j \binom{\ell}{i - j} (-1)^{i - j} q^p a_{j-p} t^i
      \\
      {}={} & \sum_{k=0}^\infty \underbrace{\sum_{i=0}^k \sum_{j=0}^i \sum_{p=0}^j \binom{\ell + m +
      n - 2 + k - i}{k - i} (-1)^{k - j} \binom{\ell}{i - j} q^p a_{j-p}}_{=: b_k} t^k.
    \end{align*}
    Note that
    \begin{align*}
      b_0 ={} & a_0 = 1 \qquad \text{and} \\
      b_1 ={} & \left( q - \ell - (\ell + m + n - 1) \right) a_0 + a_1 = q - 2 \ell - m - n + 1 +
      a_1.
    \end{align*}
    If we then multiply $A$ by $1 - q t^2$, and use these relations, we obtain the claim.
  \end{proof}
  
  Note that one can also compute the Taylor expansion by working in $\Z[[x]]/(x^{g+1})$: multiplying
  two elements requires $\calO(g^2)$ multiplications and additions in $\Z$, whence one can compute
  $h_S(t)$ from $L_K$ using a square-and-multiply method in $\calO(\log \abs{S} \cdot g^2)$
  multiplications and additions in $\Z$. Also note that the coefficients can be effectively bounded,
  using Corollary~\ref{coefficientboundcorollary} and Theorem~\ref{hasseweilbounds}.
  
  Hence, we obtained a bound on the number of hole elements as well an exact formula, as well as a
  strategy how to quickly evaluate the formula. The error terms in the bounds are by no means
  optimal, but they suffice for our needs.
  
  \section{On the Size of Holes}
  \label{onsizeofholes}
  
  Using the methods from Section~\ref{relatinginfrastructure} and Section~\ref{genfuns}, we can
  state some results on the size of holes; holes can be thought of as clusters of hole
  elements. First, we want to make this informal definition more precise.
  
  On $\Red_\frakp(K)$, define the equivalence relation \[ D \sim_S D' :\Leftrightarrow \ideal_S(D) =
  \ideal_S(D'). \] It turns out that every equivalence class contains exactly one element of
  $\Red_S(K)$. For $D \in \Red_S(K)$, we call $h(D) := [D]_{\sim_S} \setminus \{ D \}$ the
  \emph{hole} associated to $D$. Every element of $h(D)$ is a hole element, as well as any hole
  element is contained in some $h(D)$.
  
  So far, the only result known on the size of $\abs{h(D)}$ is Proposition~4.1 of
  \cite{paulus-rueck}: in the case $\abs{S} = 2$ and that the two elements in $S$ are conjugated
  under $\sigma$, and both are of degree~one, they show that $\abs{h(D)} = g - \deg D$.
  
  For all $D' \in h(D)$ we have $D \le D'$; hence, in case $\deg D = g$, $h(D) =
  \emptyset$. Assuming that $S \setminus \{ \frakp \}$ contains a place~$\frakq$ of degree~one with
  $\sigma(\frakq) \in S$, $\deg D < g$ implies $D + \frakq \in h(D)$ by
  Proposition~\ref{combinatoricaldescriptionofreds}, whence $h(D) \neq \emptyset$. We need the
  assumption that $\sigma(\frakq) \in S$, as otherwise it could happen that $\nu_{\sigma(\frakq)}(D)
  > 0$, whence $D + \frakq \ge \Conorm_{K/R}(\frakq \cap R)$.
  
  For the rest of \textbf{this section, we assume that all places in $S$ are of degree~one}.
  
  \begin{proposition}
    Let $D \in \Div_S(K)$ and set
    \begin{itemize}
      \item $n = \abs{\{ \frakp' \in S \setminus \{ \frakp \} \mid \sigma(\frakp') = \frakp' \}}$,
      \item $\ell = \frac{1}{2} \abs{\{ \frakp' \in S \setminus \{ \frakp \} \mid \frakp' \neq
      \sigma(\frakp') \in S \setminus \{ \frakp \} \}}$,
      \item $r = \abs{\{ \frakp' \in S \setminus \{ \frakp \} \mid \sigma(\frakp') = \frakp \}}$,
      \item $m = \abs{\{ \frakp' \in S \setminus \{ \frakp \} \mid \sigma(\frakp') \not\in S \}}$
      and 
      \item $m'_D = \abs{\{ \frakp' \in S \setminus \{ \frakp \} \mid \sigma(\frakp') \not\in S,
      \nu_{\sigma(\frakp')}(D) = 0 \}}$.
    \end{itemize}
    Then the following statements hold:
    \begin{enuma}
      \item We have $r, n, m, \ell, m'_D \in \N$ with $m'_D \le m$, $r \le 1$ and $\abs{S} = n + m +
      r + 2 \ell + 1$.
      \item In case $m'_D < \abs{S} - 1$, we have $h(D) = \emptyset$ if, and only if, $\deg D =
      g$. The ``if'' part also holds if $m'_D = \abs{S} - 1$.
      \item In case $\abs{S} = 1$, we have $h(D) = \emptyset$.
      \item There is a bijection between $h(D)$ and the set
      \begin{align*}
        A_{g - \deg D}^{n, \ell, m'_D + r} := \left\{ (a, b, c) \;\middle| \begin{matrix} a =
          (a_i)_i \in \{ 0, 1 \}^n, \; b = (b_i)_i \in \Z^\ell, \; c = (c_i)_i \in \N^{m'_D + r} \\
          1 \le \sum_{i=1}^n a_i + \sum_{i=1}^\ell \abs{b_i} + \sum_{i=1}^{m'_D + r} c_i \le g -
          \deg D \hfill \end{matrix} \right\}.
      \end{align*}
    \end{enuma}
  \end{proposition}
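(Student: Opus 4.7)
The plan is to parametrise the hole $h(D)$ by writing an arbitrary $D' \in \Red_\frakp(K)$ with $D' \sim_S D$ in the form $D' = D + \sum_{\frakp' \in S} x_{\frakp'} \frakp'$; this is legitimate because $\ideal_S(D') = \ideal_S(D)$ forces $\support(D' - D) \subseteq S$. Since every place in $S$ has degree one, $\deg D' = \deg D + \sum_{\frakp' \in S} x_{\frakp'}$, and the conditions $D' \ge 0$, $\nu_\frakp(D') = 0$, together with $D \in \Red_S(K)$ (so $\nu_{\frakp''}(D) = 0$ for all $\frakp'' \in S$), immediately give $x_\frakp = 0$ and $x_{\frakp'} \ge 0$ for $\frakp' \in S \setminus \{\frakp\}$. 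Part~(a) is then a routine partition of $S \setminus \{\frakp\}$ into the types counted by $n$, $2\ell$, $r$, and $m$, using only that $\sigma$ is an involution (so $r \le 1$).

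The heart of the proof is part~(d), which I would establish by applying Proposition~\ref{combinatoricaldescriptionofreds} to $D'$, reading off the local constraint at each place $\frakq$ of $R$ lying below some $\frakp' \in S$, and handling four cases separately. If $\sigma(\frakp') = \frakp'$, then $\frakq$ ramifies in $K$ (since $\deg \frakp' = 1$ excludes inertia), and the reduced condition forces $x_{\frakp'} \in \{0, 1\}$, producing the $n$ entries of $a \in \{0, 1\}^n$. If $\frakp' \neq \sigma(\frakp') \in S \setminus \{\frakp\}$, then $\frakq$ splits and the multiset $\{x_{\frakp'}, x_{\sigma(\frakp')}\}$ has the form $\{0, k\}$ with $k \ge 0$; recording which element of the pair carries the nonzero value by a sign yields a coordinate $b_i \in \Z$, and there are $\ell$ such pairs. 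If $\sigma(\frakp') = \frakp$ (possible for at most one $\frakp'$), the split condition combined with $\nu_\frakp(D') = 0$ leaves $x_{\frakp'} \in \N$. Finally, if $\sigma(\frakp') \notin S$, then $\nu_{\sigma(\frakp')}(D') = \nu_{\sigma(\frakp')}(D)$ is locked; the split condition forces $x_{\frakp'} = 0$ whenever this value is positive, and allows $x_{\frakp'} \in \N$ precisely when it vanishes, contributing exactly $m'_D$ further $\N$-coordinates. The last two cases jointly account for the $m'_D + r$ copies of $\N$ in $A_{g - \deg D}^{n, \ell, m'_D + r}$. The bound $\deg D' \le g$ translates to $\sum a_i + \sum \abs{b_i} + \sum c_i \le g - \deg D$, and the requirement $D' \neq D$ imposes the strict lower bound $\ge 1$.

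With (d) in hand, parts~(b) and~(c) become elementary parameter counts. For (c), if $\abs{S} = 1$ then $n = \ell = r = m = m'_D = 0$, so the parameter set is empty and $h(D) = \emptyset$. For (b), the ``if'' direction is immediate from (d) because $\deg D = g$ makes the inequality $1 \le \cdots \le 0$ unsatisfiable; the converse under the hypothesis $m'_D < \abs{S} - 1$ follows by exhibiting a single-coordinate element of $A_{g - \deg D}^{n, \ell, m'_D + r}$ in whichever of the four coordinate types is nontrivial, once one verifies that this hypothesis rules out the configuration in which every coordinate is frozen. The main obstacle I anticipate is keeping the case analysis in (d) completely clean, especially the distinction between $m$ and $m'_D$ and the correct reading of the split constraint when only one of the two places above $\frakq$ lies in $S$; after that, everything reduces to bookkeeping.
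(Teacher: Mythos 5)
Your overall strategy coincides with the paper's: parts (a) and (c) are bookkeeping, and part (d) is proved by matching the coordinates of $A_{g - \deg D}^{n, \ell, m'_D + r}$ with the valuations $\nu_{\frakp'}(D' - D)$ for $\frakp' \in S \setminus \{ \frakp \}$ and invoking Proposition~\ref{combinatoricaldescriptionofreds} place-by-place over $R$. The paper writes the map in the direction coordinates~$\to$~divisors and leaves surjectivity implicit, whereas you start from an arbitrary $D' \sim_S D$ and read the coordinates off; your four-case analysis (ramified, split with both halves in $S$, conjugate equal to $\frakp$, conjugate outside $S$) is exactly the content the paper compresses into ``follows directly from the definition of $\Psi$''. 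That part of your argument is correct and, if anything, more explicit than the paper's.

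The genuine gap is in part (b). You defer the key point to ``once one verifies that this hypothesis rules out the configuration in which every coordinate is frozen'' --- but that verification fails. Take $n = \ell = r = 0$ and $m = \abs{S} - 1 \ge 1$ with $m'_D = 0$, i.e.\ every $\frakp' \in S \setminus \{ \frakp \}$ has $\sigma(\frakp') \notin S$ and $\nu_{\sigma(\frakp')}(D) > 0$; this is realizable, e.g.\ $g = 2$, $S = \{ \frakp, \frakp' \}$ and $D = \sigma(\frakp')$, which lies in $\Red_S(K)$ and has $\deg D = 1 < g$. Then $m'_D = 0 < \abs{S} - 1$, yet by your own part (d) the parameter set is $A_{g - \deg D}^{0,0,0} = \emptyset$, so $h(D) = \emptyset$. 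Hence $m'_D < \abs{S} - 1$ does not force $h(D) \neq \emptyset$ when $\deg D < g$; the condition that actually makes the ``only if'' direction work is $n + 2\ell + r + m'_D \ge 1$, equivalently $m - m'_D < \abs{S} - 1$, which suggests the hypothesis in the statement is a typo. To be fair, the paper's own proof of (b) (``follows from the above discussion'') only treats the case where some $\frakq \in S \setminus \{ \frakp \}$ has $\sigma(\frakq) \in S$ and does not close this case either; but as a proof of the statement as written, your argument for (b) cannot be completed.
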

  
  \begin{proof}
    Part~(a) is clear, and part~(b) follows from the above discussion. Part~(c) is clear as in that
    case, $\Red_S(K) = \Red_\frakp(K)$. For part~(d), let
    \begin{align*}
      \{ \frakp' \in S \setminus \{ \frakp \} \mid \sigma(\frakp') = \frakp' \} ={} & \{
      \frakp_{1,1}, \dots, \frakp_{1,n}
      \} \\
      \{ \frakp' \in S \setminus \{ \frakp \} \mid \frakp' \neq \sigma(\frakp') \in S \setminus \{
      \frakp \} \} ={} & \{ \frakp_{2,1}, \dots, \frakp_{2,\ell}, \sigma(\frakp_{2,1}), \dots,
      \sigma(\frakp_{2,\ell}) \} \\
      \{ \frakp' \in S \setminus \{ \frakp \} \mid \sigma(\frakp') = \frakp \} ={} & \{
      \frakp_{3,1}, \dots, \frakp_{3,r} \} \qquad \text{and} \\
      \{ \frakp' \in S \setminus \{ \frakp \} \mid \sigma(\frakp') \not\in S,
      \nu_{\sigma(\frakp)}(D) = 0 \} ={} & \{ \frakp_{4,1}, \dots, \frakp_{4,m'_D} \}.
    \end{align*}
    Define the map
    \begin{align*}
      \Psi : A_{g - \deg D}^{n, \ell, m'_D + r} \to{} & \Div(K), \\
      ((a_i)_i, (b_i)_i, (c_i)_i) \mapsto{} & D + \sum_{i=1}^n a_i \frakp_{1,i} + \sum_{i=1}^\ell
      \max\{ b_i, 0 \} \frakp_{2,i} \\
      & \phantom{D} {}+ \sum_{i=1}^\ell \max\{ -b_i, 0 \} \sigma(\frakp_{2,i}) + \sum_{i=1}^{m'_D}
      c_i \frakp_{4,i} + \sum_{i=m'_D+1}^{m'_D+r} c_i \frakp_{3,i}.
    \end{align*}
    Clearly, $\deg \Psi((a, b, c)) \le g$, $\ideal_S(\Psi((a, b, c))) = \ideal_S(D)$ and $\Psi((a,
    b, c)) \neq D$ for all $(a, b, c) \in A_{g - \deg D}^{n, \ell, m'_D + r}$. Therefore, it
    suffices to show that the image of $\Psi$ lies in $\Red_\frakp(K)$. But this follows directly
    from the definition of $\Psi$, $A_{g - \deg D}^{n, \ell, m'_D + r}$ and
    Proposition~\ref{combinatoricaldescriptionofreds}.
  \end{proof}
  
  In the following we assume that a divisor~$D \in \Red_S(K)$ is fixed, and we use $n, m, \ell,
  m'_D$ as in the proposition.
  
  Hence, to estimate the size of $h(D)$, we have to estimate the size of $A_{g - \deg D}^{n, \ell,
  m'_D+r}$. For that, define $\hat{A}_s^{n, \ell, m'_D+r}$ as the set \[ \left\{ (a, b, c)
  \;\middle| \begin{matrix} a = (a_i)_i \in \{ 0, 1 \}^n, \; b = (b_i)_i \in \Z^\ell, \; c = (c_i)_i
    \in \N^{m'_D+r} \\ \sum_{i=1}^n a_i + \sum_{i=1}^\ell \abs{b_i} + \sum_{i=1}^{m'_D+r} c_i = s
    \hfill \end{matrix} \right\} \] for $s \in \N$; then $A_{\deg D - d}^{n, \ell, m'_D + r} =
  \bigcup_{s=1}^{\deg D - d} \hat{A}_s^{n, \ell, m'_D + r}$ is a disjoint union. Set \[ f_{n, \ell,
  m'_D + r}(t) := \sum_{s=0}^\infty \bigl|\hat{A}_s^{n, \ell, m'_D + r}\bigr| t^s; \] then, one
  quickly obtains
  \begin{align*}
    f_{n, \ell, m'_D + r}(t) ={} & \left( 1 + t \right)^n \cdot \left( 1 + 2 \sum_{s=1}^\infty t^s
    \right)^\ell \cdot \left( \sum_{s=0}^\infty t^s \right)^{m'_D + r} \\
    {}={} & (1 + t)^n \left( -1 + 2 \frac{1}{1 - t} \right)^\ell (1 - t)^{-m'_D-r} \\
    {}={} & (1 + t)^{n + \ell} (1 - t)^{-m'_D - r - \ell}.
  \end{align*}
  Using Lemma~\ref{binomialpowerseriesexpansion}, this equals (with $\binom{-1}{0} = 1$) \[
  \sum_{s=0}^\infty \underbrace{\left( \sum_{i=0}^s \binom{n + \ell}{s - i} \binom{i + m'_D + r +
  \ell - 1}{i} \right)}_{=: \hat{a}_s^{n+\ell,m'_D+r+\ell}} t^s, \] whence
  $\abs{\hat{A}_s^{n,\ell,m'_D+r}} = \hat{a}_s^{n+\ell,m'_D+r+\ell}$. Combining this with
  $\abs{h(D)} = \sum_{s=1}^{g - \deg D} \hat{a}_s^{n+\ell,m'_D+r+\ell}$, we obtain:
  
  \begin{proposition}
    \label{holesizeprop}
    We have that \[ \abs{h(D)} = \sum_{s=1}^{g - \deg D} \sum_{i=0}^s \binom{n + \ell}{s - i}
    \binom{i + m'_D + r + \ell - 1}{i}. \] \qed
  \end{proposition}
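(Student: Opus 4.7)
The proof is essentially a bookkeeping exercise that assembles the three ingredients already prepared in the text above the proposition. The plan is to chain together: first the bijection between $h(D)$ and $A_{g-\deg D}^{n,\ell,m'_D+r}$ from the preceding proposition, then the disjoint union decomposition $A_{g-\deg D}^{n,\ell,m'_D+r} = \bigcup_{s=1}^{g-\deg D} \hat{A}_s^{n,\ell,m'_D+r}$, and finally the explicit formula for $\abs{\hat{A}_s^{n,\ell,m'_D+r}}$ extracted from the generating function $f_{n,\ell,m'_D+r}(t)$.

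Concretely, I would write $\abs{h(D)} = \abs{A_{g-\deg D}^{n,\ell,m'_D+r}} = \sum_{s=1}^{g-\deg D} \abs{\hat{A}_s^{n,\ell,m'_D+r}}$ using the bijection and the disjointness of the sets $\hat{A}_s^{n,\ell,m'_D+r}$ for different values of $s$ (which is immediate since these sets are defined by fixing the total $\sum a_i + \sum \abs{b_i} + \sum c_i$). Then I would invoke the identification $\abs{\hat{A}_s^{n,\ell,m'_D+r}} = \hat{a}_s^{n+\ell,m'_D+r+\ell}$, already extracted as the $s$-th Taylor coefficient of the rational function $(1+t)^{n+\ell}(1-t)^{-m'_D-r-\ell}$ via Lemma~\ref{binomialpowerseriesexpansion}, and substitute the explicit binomial-sum expression for $\hat{a}_s^{n+\ell,m'_D+r+\ell}$ to obtain the claimed formula.

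There is essentially no obstacle here: every step needed has already been carried out in the discussion preceding the statement. The only small thing to double-check is that the starting index in the outer sum is $s=1$ rather than $s=0$, which simply reflects the corresponding restriction $\sum a_i + \sum \abs{b_i} + \sum c_i \ge 1$ in the definition of $A_{g-\deg D}^{n,\ell,m'_D+r}$ (this is exactly the condition $D' \neq D$, i.e., that we are looking at $h(D) = [D]_{\sim_S} \setminus \{D\}$ and not the whole equivalence class). Thus the proof reduces to a one-line computation citing the preceding displayed equations.
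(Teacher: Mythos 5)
Your proposal is correct and is essentially identical to the paper's own argument: the paper attaches a \qed to the statement precisely because the proof is the chain you describe — the bijection $h(D) \leftrightarrow A_{g-\deg D}^{n,\ell,m'_D+r}$ from part (d) of the preceding proposition, the disjoint decomposition into the sets $\hat{A}_s^{n,\ell,m'_D+r}$, and the identification of $\bigl|\hat{A}_s^{n,\ell,m'_D+r}\bigr|$ with the Taylor coefficient $\hat{a}_s^{n+\ell,m'_D+r+\ell}$ of $(1+t)^{n+\ell}(1-t)^{-m'_D-r-\ell}$ via Lemma~\ref{binomialpowerseriesexpansion}. Your remark about the outer sum starting at $s=1$ (reflecting $D' \neq D$) matches the paper's setup as well.
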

  
  This allows us to give an upper and lower bound for $\abs{h(D)}$. We begin with the upper bound.
  
  \begin{proposition}
    \label{holesizeupperbound}
    Assume that $\abs{S} \ge 2$. We then have
    \begin{align*}
      \abs{h(D)} + 1 \le{} & \binom{\abs{S} - (m - m'_D) - 1 + g - \deg D}{g - \deg D} \\
      {}\le{} & \binom{\abs{S} - 1 + g - \deg D}{g - \deg D} \le \frac{(\abs{S} - 1 + g - \deg D)^{g
      - \deg D}}{(g - \deg D)!}.
    \end{align*}
  \end{proposition}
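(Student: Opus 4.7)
The plan is to bound $\abs{h(D)} + 1$ via the closed-form generating function $f_{n,\ell,m'_D+r}(t)$ that was already computed in the discussion preceding Proposition~\ref{holesizeprop}, and then compare its coefficients term-by-term with those of a single negative-power factor $(1 - t)^{-N}$.

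\textbf{Step 1: rewrite $\abs{h(D)}+1$ as a single coefficient.}\ By Proposition~\ref{holesizeprop} and the convention $\binom{-1}{0} = 1$, adding the $s = 0$ term gives
\[ \abs{h(D)} + 1 = \sum_{s=0}^{g - \deg D} \bigl|\hat{A}_s^{n, \ell, m'_D + r}\bigr| = \sum_{s=0}^{g - \deg D} [t^s] f_{n,\ell,m'_D+r}(t). \]
Since $\sum_{s=0}^{N}[t^s] F(t) = [t^N]\bigl(F(t)/(1-t)\bigr)$, combining with the formula $f_{n,\ell,m'_D+r}(t) = (1+t)^{n+\ell}(1-t)^{-m'_D-r-\ell}$, we obtain
\[ \abs{h(D)} + 1 = [t^{g-\deg D}]\, (1+t)^{n+\ell}(1-t)^{-(m'_D+r+\ell+1)}. \]

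\textbf{Step 2: coefficient-wise majorisation.}\ The main point is the elementary inequality $\binom{N}{k} \le \binom{N + k - 1}{k}$ for all $N, k \ge 0$ (both sides vanish for $k > N$ when $N = 0$; for $N \ge 1$ the left product $N(N-1)\cdots(N-k+1)$ is bounded by $N(N+1)\cdots(N+k-1)$ term by term, and for $k > N$ the left side is zero). By Lemma~\ref{binomialpowerseriesexpansion}, this says that the coefficients of $(1+t)^{n+\ell}$ are pointwise bounded by those of $(1-t)^{-(n+\ell)}$. Since $(1-t)^{-(m'_D+r+\ell+1)}$ has non-negative coefficients, convolving preserves the pointwise inequality, so
\[ [t^{g-\deg D}]\, (1+t)^{n+\ell}(1-t)^{-(m'_D+r+\ell+1)} \le [t^{g-\deg D}]\, (1-t)^{-(n+2\ell+m'_D+r+1)}. \]

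\textbf{Step 3: read off the binomial coefficient and finish.}\ The exponent on the right simplifies using part~(a) of the preceding proposition: $\abs{S} - 1 = n + m + r + 2\ell$, whence $n + 2\ell + m'_D + r = \abs{S} - 1 - (m - m'_D)$. Applying Lemma~\ref{binomialpowerseriesexpansion}(b) yields
\[ \abs{h(D)} + 1 \le \binom{\abs{S} - (m - m'_D) - 1 + g - \deg D}{g - \deg D}, \]
which is the first inequality of the proposition. The second inequality follows from $m - m'_D \ge 0$ together with monotonicity of $\binom{N + k}{k}$ in~$N$; the third is the standard bound $\binom{N}{k} \le N^k/k!$.

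\textbf{Anticipated obstacle.}\ The only real work is Step~2, the coefficient-wise comparison $(1+t)^{n+\ell} \preceq (1-t)^{-(n+\ell)}$. Everything else is bookkeeping with generating functions that were already assembled in the paper, together with the identity $\sum_s [t^s] F(t) = [t^N](F/(1-t))$. No further combinatorial identity is needed.
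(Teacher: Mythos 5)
Your proof is correct and is essentially the paper's argument in generating-function clothing: the paper bounds $\bigl|\hat{A}_s^{n,\ell,m'_D+r}\bigr|$ by explicitly injecting $A_s^{n,\ell,m'_D+r}$ into $A_s^{0,0,n+m'_D+r+2\ell}$ (splitting each $b_i$ into $b_i^+$ and $b_i^-$) and then summing with the hockey-stick identity, which is exactly the combinatorial shadow of your coefficient-wise majorisation $(1+t)^{n+\ell}\preceq (1-t)^{-(n+\ell)}$ followed by absorbing the outer sum into a factor $(1-t)^{-1}$. Both routes land on the same binomial coefficient $\binom{\abs{S}-(m-m'_D)-1+g-\deg D}{g-\deg D}$, and your handling of the degenerate case $n+2\ell+m'_D+r=0$ and of the final two inequalities is sound.
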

  
  \begin{proof}
    Set $s := g - \deg D$. Clearly, one can embed $A_s^{n, \ell, m'_D + r}$ into $A_s^{0, 0, n +
    m'_D + r + 2 \ell}$ by $((a_i)_i, (b_i)_i, (c_i)_i) \mapsto ((), (), (a_1, \dots, a_n, b_1^+,
    \dots, b_{\ell}^+, b_1^-, \dots, b_{\ell}^-, c_1, \dots, c_{m'_D+r}))$ with $b_i^+ := \max\{
    b_i, 0 \}$ and $b_i^- := \max\{ -b_i, 0 \}$. Hence, we get
    \begin{equation}\tag{$\ast$}
      \abs{A_s^{n, \ell, m'_D+r}} \le \abs{A_s^{0, 0, n + m'_D + r + 2 \ell}} = \sum_{i=1}^s
      \binom{i + n + m'_D + r + 2 \ell - 1}{n + m'_D + r + 2 \ell - 1}.
    \end{equation}
    
    In case $n + m'_D + r + 2 \ell = 0$, we get $\abs{A_s^{n, \ell, m'_D + r}} = 0$. Hence, assume that
    $t := n + m'_D + r + 2 \ell - 1 \ge 0$, and $t \le \abs{S} - 2$; in that case, we get
    \begin{align*}
      \abs{A_s^{n, \ell, m'_D + r}} \le{} & \sum_{i=0}^s \binom{i + t}{i} - 1 = \binom{s + t + 1}{s}
      - 1 \\
      {}\le{} & \binom{s + \abs{S} - 1}{s} - 1 \le \frac{(\abs{S} - 1 + s)}{s!} - 1,
    \end{align*}
    which results in the claim.
  \end{proof}
  
  \begin{proposition}
    \label{holesizelowerbound}
    Assume that $\abs{S} \ge 2$. We then have \[ \abs{h(D)} + 1 \ge \sum_{i=0}^{g - \deg D}
    \binom{\abs{S} - (m - m'_D) - 2}{i}. \]
  \end{proposition}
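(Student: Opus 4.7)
The plan is to derive the lower bound from the generating function $f_{n,\ell,m'_D+r}(t) = (1+t)^{n+\ell}(1-t)^{-(m'_D+r+\ell)}$ already computed in this section. Writing $a := n+\ell$ and $b := m'_D+r+\ell$, and noting that the coefficient $\hat{a}_0^{a,b} = 1$ corresponds to the omitted triple $(0,0,0)$ (i.e.\ to $D$ itself), the identity from Proposition~\ref{holesizeprop} can be rewritten as
\begin{equation*}
  \abs{h(D)} + 1 = \sum_{s=0}^{g-\deg D} [t^s]\bigl((1+t)^a(1-t)^{-b}\bigr).
\end{equation*}
A direct computation shows that $a + b - 1 = n + m'_D + r + 2\ell - 1 = \abs{S} - (m - m'_D) - 2$, so the claim becomes the coefficient inequality
\begin{equation*}
  \sum_{s=0}^{g-\deg D} [t^s]\bigl((1+t)^a(1-t)^{-b}\bigr) \ge \sum_{s=0}^{g-\deg D} [t^s]\bigl((1+t)^{a+b-1}\bigr).
\end{equation*}

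I would establish this inequality term by term: for every $s \ge 0$, $[t^s]\bigl((1+t)^a(1-t)^{-b}\bigr) \ge [t^s]\bigl((1+t)^{a+b-1}\bigr)$. Since $(1+t)^a$ has non-negative coefficients, it suffices to show that the difference $(1-t)^{-b} - (1+t)^{b-1}$ has non-negative coefficients, and then take the Cauchy product with $(1+t)^a$. Using Lemma~\ref{binomialpowerseriesexpansion}, the coefficient of $t^i$ in $(1-t)^{-b}$ is $\binom{i+b-1}{i} = \binom{i+b-1}{b-1}$, while in $(1+t)^{b-1}$ it equals $\binom{b-1}{i}$ (which is zero for $i \ge b$). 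The monotonicity $\binom{n}{i}$ increasing in $n$ for fixed $i$ gives $\binom{i+b-1}{b-1} \ge \binom{b-1}{i}$ whenever $b \ge 1$; the degenerate case $b = 0$ (which forces $m'_D = r = \ell = 0$) is handled separately by observing that $f(t) = (1+t)^n$ and then $\binom{n}{s} \ge \binom{n-1}{s}$ suffices, while the edge case $a+b-1 = -1$ is absorbed by the convention $\binom{-1}{0} = 1$ fixed earlier.

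Summing the term-by-term inequality over $s = 0, \dots, g - \deg D$ yields the desired lower bound. The hypothesis $\abs{S} \ge 2$ is only needed to rule out trivial edge cases in which the binomial coefficients on the right would require special interpretation; the inequality itself is purely a statement about formal power series and should not present any real obstacle. The only mildly delicate point is keeping track of which parameter is which in the identification $a + b - 1 = \abs{S} - (m - m'_D) - 2$, but this is a bookkeeping matter resolved by the definitions at the start of the section.
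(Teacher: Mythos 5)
Your proof is correct and is essentially the paper's own argument in generating-function clothing: the key step in both is the termwise bound $\binom{i+b-1}{i}\ge\binom{b-1}{i}$ for $b=m'_D+r+\ell\ge 1$, followed by Vandermonde's identity (which is exactly your Cauchy product $(1+t)^a(1+t)^{b-1}=(1+t)^{a+b-1}$), with the same separate treatment of the degenerate case $b=0$.
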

  
  \begin{proof}
    Set $s := g - \deg D$. First, assume that $m'_D + r + \ell = 0$. Then $n + 1 = \abs{S}$, and
    \begin{align*}
      \abs{h(D)} + 1 ={} & \sum_{i=0}^s \binom{\abs{S} - 1}{i} \ge \sum_{i=0}^s \binom{\abs{S} -
      2}{i}.
    \end{align*}
    Next, assume that $m'_D + r + \ell > 0$, which implies $\abs{S} \ge 2$. We have that
    \begin{align*}
      \abs{h(D)} + 1 ={} & \sum_{i=0}^s \sum_{j=0}^i \binom{n + \ell}{i - j} \binom{j + m'_D + r +
      \ell - 1}{j} \\
      {}\ge{} & \sum_{i=0}^s \sum_{j=0}^i \binom{n + \ell}{i - j} \binom{\abs{S} - (m - m'_D) - (n +
      \ell) - 2}{j} \\
      {}={} & \sum_{i=0}^s \binom{\abs{S} - (m - m'_D) - 2}{i}
    \end{align*}
    by Vandermonde's Identity.
  \end{proof}
  
  Finally, we want to analyze the situation in the case~$\abs{S} \to \infty$ when $m = m'_D$. In
  this case, the above bounds give
  \begin{align*}
    & \frac{(\abs{S} - (m - m'_D) - 1 - (g - \deg D))^{g - \deg D}}{(g - \deg D)!} \\
    {}\le{} & \binom{\abs{S} - (m - m'_D) - 2}{g - \deg D} - 1 \le \sum_{i=0}^{g - \deg D}
    \binom{\abs{S} - 2}{i} - 1 \\
    {}\le{} & \abs{h(D)} \le \binom{\abs{S} - (m - m'_D) - 1 + g - \deg D}{g - \deg D} - 1 \\
    {}\le{} & \frac{(\abs{S} - (m - m'_D) - 1 + (g - \deg D))^{g - \deg D}}{(g - \deg D)!}.
  \end{align*}
  Hence, we obtain:
  
  \begin{corollary}
    For $\abs{S} \to \infty$, \[ \abs{h(D)} \sim \frac{(\abs{S} - (m - m'_D))^{g - \deg D}}{(g -
    \deg D)!}. \] Note that in case $(S \cup \sigma(S)) \cap \support D = \emptyset$, $m - m'_D =
    0$. \qed
  \end{corollary}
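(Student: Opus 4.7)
The plan is to extract the asymptotic directly from the two-sided bound displayed in the paragraph preceding the corollary. Writing $s := g - \deg D$ and $c := m - m'_D$, that chain reads
\begin{equation*}
    \frac{(\abs{S} - c - 1 - s)^s}{s!} \;\le\; \abs{h(D)} \;\le\; \frac{(\abs{S} - c - 1 + s)^s}{s!}.
\end{equation*}
With $g$, $\deg D$, $m$, $m'_D$ held fixed and $\abs{S} \to \infty$, both bounds are polynomials in $\abs{S}$ of degree $s$ with leading coefficient $1/s!$. I would divide through by $(\abs{S} - c)^s / s!$ and observe that each quotient $\bigl((\abs{S} - c - 1 \pm s)/(\abs{S} - c)\bigr)^s$ tends to $1$, since the base tends to $1$ and the exponent $s$ is fixed. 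The squeeze theorem then delivers the claimed equivalence
\begin{equation*}
    \abs{h(D)} \;\sim\; \frac{(\abs{S} - (m - m'_D))^{g - \deg D}}{(g - \deg D)!}.
\end{equation*}

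For the supplementary observation about the case $(S \cup \sigma(S)) \cap \support D = \emptyset$, I would simply unpack the definition of $m'_D$: it counts those $\frakp' \in S \setminus \{\frakp\}$ with $\sigma(\frakp') \notin S$ and $\nu_{\sigma(\frakp')}(D) = 0$. The hypothesis $\sigma(S) \cap \support D = \emptyset$ forces $\sigma(\frakp') \notin \support D$, hence $\nu_{\sigma(\frakp')}(D) = 0$, for every $\frakp' \in S$. The vanishing condition is therefore automatic, so $m'_D$ agrees with $m$ (which is defined by the same list but without the vanishing requirement), and consequently $m - m'_D = 0$.

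No single step of this deduction is genuinely difficult; the real content sits in Propositions~\ref{holesizeupperbound} and~\ref{holesizelowerbound}, which were already established via an embedding of $A_s^{n, \ell, m'_D + r}$ into $A_s^{0, 0, n + m'_D + r + 2\ell}$ and via Vandermonde's identity respectively. The only mild subtlety is the verification that the additive shifts by integer constants of size $O(s) = O(g)$ inside the binomial numerators are asymptotically negligible compared to $\abs{S}$; if one wanted a quantitative error term instead of the bare symbol $\sim$, one would expand $(\abs{S} - c \pm (s+1))^s$ by the binomial theorem and retain the leading two terms, obtaining an error of order $\abs{S}^{s-1}$ with an implied constant depending only on $g$, $c$, and $\deg D$.
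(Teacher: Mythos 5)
Your proposal is correct and follows essentially the same route as the paper: the corollary is stated with a \qed precisely because it is read off from the displayed two-sided bound in the preceding paragraph (itself a combination of Propositions~\ref{holesizeupperbound} and~\ref{holesizelowerbound}), exactly as you do via the squeeze argument, and your unpacking of the definitions of $m$ and $m'_D$ for the final remark matches the intended reading.
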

  
  Note that the set of places of $K$ of degree~one is finite; hence, $\abs{S} \to \infty$ does not
  make sense if $K$ is fixed. With $\abs{S} \to \infty$ in the corollary, we mean that for any
  sequence of hyperelliptic function fields~$K_i$, all of genus~$g$, with sets of places~$S^{(i)}$
  of degree~one of $K_i$ with $\abs{S^{(i)}} \to \infty$, and any $D_i \in \Red_{S^{(i)}}(K_i)$ with
  $\deg D_i$ independent of $i$, we have \[ \lim_{i\to\infty} \frac{h(D_i) \cdot (g - \deg
  D_i)!}{\bigl(\abs{S^{(i)}} - (m_{S^{(i)}} - m'_{S^{(i)},D})\bigr)^{g - \deg D_i}} = 1. \]
  
  Finally, for the special case~$D = 0$, the hole gets as big as it can:
  
  \begin{proposition}
    \label{zerodivisorholesize}
    Assume that $\abs{S} \ge 2$ and $S \setminus \{ \frakp \}$ contains a place lying above an
    unramified place of $R$. For $D \in \Red_S(K)$, we have that $\abs{h(D)}$ is maximal if, and
    only if, $D = 0$; in that case, \[ \abs{h(0)} = \sum_{s=1}^g \sum_{i=0}^s \binom{n + \ell}{s -
    i} \binom{i + m + r + \ell - 1}{i}. \] In particular, for $\abs{S} \to \infty$, $\abs{h(0)} \sim
    \frac{\abs{S}^g}{g!}$.
  \end{proposition}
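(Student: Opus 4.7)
My plan is to read off both the formula and the maximality directly from Proposition~\ref{holesizeprop}, in which $\abs{h(D)} = \sum_{s=1}^{g - \deg D} T(s, m'_D)$ with $T(s, M) := \sum_{i=0}^s \binom{n + \ell}{s - i}\binom{i + M + r + \ell - 1}{i}$ depends on $D$ only through $\deg D$ and $m'_D$. Since trivially $\nu_{\sigma(\frakp')}(0) = 0$ for every $\frakp'$, the zero divisor satisfies $\deg 0 = 0$ and $m'_0 = m$, so substituting into Proposition~\ref{holesizeprop} immediately yields the claimed closed form for $\abs{h(0)}$.

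For the maximality statement, two monotonicities suffice. First, $T(s, M)$ is weakly increasing in $M$, and since $m'_D \le m = m'_0$ we have $T(s, m'_D) \le T(s, m)$ for every $s$. Second, because $D \ge 0$ (as $D \in \Red_S(K)$), having $\deg D = 0$ forces $D = 0$; any $D \ne 0$ therefore contributes a sum indexed by $1 \le s \le g - \deg D \le g - 1$, i.e.\ omits at least the $s = g$ term present in $\abs{h(0)}$. To upgrade the resulting inequality to a strict one it is enough to verify that $T(g, m) > 0$, and this is where the unramified-place hypothesis enters: a place $\frakp' \in S \setminus \{\frakp\}$ of degree one cannot lie above an inert place of $R$ (the conorm would have degree two), so if it lies above an unramified place, that place must split in $K$, and $\frakp'$ contributes to one of $\ell, m, r$. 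Hence $\ell + m + r \ge 1$, and taking $i = s$ in $T(s, m)$ gives the strictly positive summand $\binom{s + m + r + \ell - 1}{s}$, so $T(s, m) > 0$ for every $s \ge 1$ and in particular $T(g, m) > 0$. This settles $\abs{h(D)} < \abs{h(0)}$ for $D \ne 0$.

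Finally, the asymptotic $\abs{h(0)} \sim \abs{S}^g / g!$ is nothing but the corollary preceding this proposition specialised to $D = 0$: with $\deg 0 = 0$ and $m - m'_0 = 0$, the generic equivalent $(\abs{S} - (m - m'_D))^{g - \deg D}/(g - \deg D)!$ reduces directly to $\abs{S}^g/g!$. The only real obstacle in the whole argument is the strict termwise comparison, and I expect that to collapse, as sketched above, to the single observation that the unramified-place hypothesis forbids $\ell = m = r = 0$; everything else is bookkeeping on top of Proposition~\ref{holesizeprop} and its corollary.
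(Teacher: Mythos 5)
Your proof is correct, but it takes a genuinely different route from the paper's. You work entirely with the closed formula of Proposition~\ref{holesizeprop}: you note that $\abs{h(D)}$ depends on $D$ only through $\deg D$ and $m'_D$, compare termwise using the monotonicity of $\binom{i+M+r+\ell-1}{i}$ in $M$ together with $m'_D\le m=m'_0$, and obtain strictness from the extra $s=g$ summand, which is positive because the unramified-place hypothesis forces $\ell+m+r\ge 1$ (a degree-one place cannot lie over an inert place, so its conjugate is distinct and it is counted in $\ell$, $r$, or $m$). The paper instead argues directly on the hole sets: it observes that $D=0$ is the unique reduced divisor with $g-\deg D=g$, exhibits the injection $h(D)\to h(0)$, $D'\mapsto D'-D$, and uses the unramified place $\frakp'$ to produce the element $g\frakp'\in h(0)$ missing from the image when $D\ne 0$. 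The paper's argument is shorter and more structural, though it leaves to the reader the small check that $D'-D$ really lands in $h(0)$; your argument is purely numerical bookkeeping on top of Proposition~\ref{holesizeprop}, avoids that check, and makes completely explicit where the unramified hypothesis enters. Both proofs treat the formula for $\abs{h(0)}$ and the asymptotic the same way, by specializing $\deg D=0$ and $m'_0=m$ in Proposition~\ref{holesizeprop} and the preceding corollary.
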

  
  \begin{proof}
    Clearly, $D = 0$ is the only reduced divisor for which $g - \deg D = g$. For any divisor~$D \in
    \Red_S(K)$, the map $h(D) \to h(0)$, $D' \mapsto D' - D$ is injective. As there exists a
    place~$\frakp' \in S \setminus \{ \frakp \}$ which does not lie over a ramified place of $R$, $g
    \frakp' \in h(0)$ lies not in the image of the above map~$h(D) \to h(0)$ if $D \neq 0$.
  \end{proof}
  
  We have seen that the size of $h(D)$ only depends on $\deg D$, $S$ and $m_{D'}$, and were able to
  give a precise formula for $\abs{h(D)}$. Moreover, we were able to give lower and upper bounds for
  $\abs{h(D)}$ which shows the behavior for $\abs{S} \to \infty$.

  \section{Conclusion}
  \label{conclusion}
  
  We have shown that at least in the case of hyperelliptic function fields, the number of hole
  elements in the infrastructure of a global function fields behaves as expected, namely the number
  of hole elements is $n q^{g - 1}$, where $n + 1$ is the number of infinite places, up to an error
  term of $\calO(q^{g - 3/2})$. Moreover, we obtained an explicit formula for the number of hole
  elements involving only certain information on $S$ and the $L$-polynomial of $K$.
  
  A natural question is whether this holds as well for all global function fields, and if not, how
  one has to adjust the bounds. So far, the author is not aware of any answer to this
  question. Based on the results in this paper and on the experiments in \cite{landquist-thesis,
  fontein-diss}, we conjecture:
  
  \begin{conjecture}
    Let $K$ be a function field of genus~$g$ with exact constant field~$\F_q$, and assume that $S$
    is a finite set of places of $K$ containing at least one place of degree~one. Then, for $q \to
    \infty$, \[ \abs{\abs{\Red_\frakp(K) \setminus \Red_S(K)} - (\abs{S_1} - 1)} = 2 g (\abs{S_1} -
    1) q^{g - 3/2} + \calO(q^{g - 2}) \] and \[ \abs{\frac{\abs{\Red_\frakp(K) \setminus
    \Red_S(K)}}{\abs{\Red_\frakp(K)}} - \frac{\abs{S_1} - 1}{q}} = \calO(q^{-3/2}), \] where the
    $\calO$-constants only depend on $\abs{S}$ and $g$.
  \end{conjecture}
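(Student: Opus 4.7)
My plan is to extend the generating-function machinery of Sections~\ref{genfuns} and~\ref{countingreddivs} to arbitrary function fields, using the zeta function of $K$ in place of the rational-subfield decomposition $K/R$.  Extend $\Red_S^d(K)$ to all $d \ge 0$ by the conditions $D \ge 0$, $\deg D = d$, $\nu_\frakq(D) = 0$ for $\frakq \in S$, and $L(D) = k$, and form $h_S(t) = \sum_d C_d(S) t^d$.  As in the hyperelliptic case, the strategy is first to relate $h_S(t)$ to $h_\emptyset(t)$ through a rational factor determined by $S$, and then to estimate the coefficients of $h_\emptyset(t)$ using the $L$-polynomial $L_K(t)$.

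First I would look for the analogue of Proposition~\ref{SSprimerelation}, expressing $h_S(t)$ in terms of $h_{S \cup \{\frakp'\}}(t)$.  In the hyperelliptic setting this relation is exact because reducedness is equivalent to a local condition at each place of $R$ (no conorm below $D$), whereas for general $K$ the condition $L(D) = k$ is inherently global, so one should at best expect an approximate relation
\begin{equation*}
  h_S(t) \;=\; R_{\frakp'}(t) \cdot h_{S \cup \{\frakp'\}}(t) \;+\; E_{\frakp'}(t),
\end{equation*}
with $R_{\frakp'}$ a simple rational function mimicking the hyperelliptic factors and $E_{\frakp'}$ an error power series whose coefficients must be shown to be $\calO(q^{d-2})$ in degree $d$.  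The natural way to extract this is to study the fibers of $D \mapsto D - \frakp'$ on $\{D \in \Red_\frakp^d(K) : D \ge \frakp'\}$ and to bound how often $D - \frakp'$ fails to remain reduced; for the latter one would need Brill--Noether-style estimates controlling the codimension of the locus in $\Pic^d(K)$ where $\dim L$ jumps.

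Once the single-place transition is under control, iteration yields $h_S(t) = R_S(t) h_\emptyset(t) + E_S(t)$ with $R_S(t) = 1 - (\abs{S_1}-1)t + \calO(t^2)$, exactly as in Corollary~\ref{coefficientboundcorollary}.  Combining this with Hasse--Weil bounds on the coefficients $a_i$ of $L_K$ (Proposition~\ref{hasseweilbounds}), together with the observation that $h_\emptyset(t)$ still equals $L_K(t)$ divided by a known rational factor coming from Riemann--Roch for $\deg D \ge g$, one would reproduce the template of Theorem~\ref{mainresult1}: the coefficient of $t^{g-1}$ in $(h_\emptyset - h_S)(t)$ supplies the main $(\abs{S_1}-1)q^{g-1}$ term, while the coefficients of $t^d$ for $d < g-1$ contribute only to the $\calO(q^{g-3/2})$ error.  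The ratio statement then follows by dividing by $\abs{\Pic^0(K)} = q^g(1 + \calO(q^{-1/2}))$, mirroring the proof of Corollary~\ref{mainresult2}.

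The main obstacle is precisely the non-locality of the condition $L(D) = k$ in the general case.  In the hyperelliptic setting, Corollary~\ref{hSexplicitdescription} expresses the transition factor as a clean product because reducedness was a \emph{simultaneous} local constraint at each place of $R$; no such local-to-global bridge is available for arbitrary $K$.  Without either a geometric Riemann--Roch argument controlling the special-divisor locus, or a character-theoretic approach counting directly in $\Pic^0(K)$ via Weil-type sums, the conjectured $q^{g-3/2}$ error term --- sharper by a factor of $q^{1/2}$ than the trivial Hasse--Weil estimate --- appears to lie genuinely beyond the combinatorial method of the present paper, and this is where a new idea will be needed.
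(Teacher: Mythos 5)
The statement you are trying to prove is one of the paper's concluding \emph{conjectures}: the paper itself offers no proof of it for general global function fields, and explicitly says the author is not aware of an answer. What the paper actually proves is the hyperelliptic special case (Theorem~\ref{mainresult1} and Corollary~\ref{mainresult2}), and every quantitative step there leans on the degree-two cover $K/R$: reducedness becomes a purely \emph{local} condition at the places of $R$ (Theorem~\ref{reducedtheorem}, Proposition~\ref{combinatoricaldescriptionofreds}), which is what makes the transition rules of Proposition~\ref{SSprimerelation} exact, the product formula of Corollary~\ref{hSexplicitdescription} clean, and the identity $h_\emptyset(t) = Z_K(t)/Z_R(t^2)$ available. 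Your proposal reproduces this template but, as you yourself concede in your final paragraph, the decisive step --- replacing the exact local transition $h_S = R_{\frakp'}\, h_{S\cup\{\frakp'\}}$ by an approximate one and bounding the error $E_{\frakp'}$ by $\calO(q^{d-2})$ via Brill--Noether-type estimates --- is not carried out and requires ``a new idea.'' That is precisely the gap; naming the obstruction is not the same as overcoming it, so the proposal is a research plan rather than a proof, and there is no argument in the paper it could be matched against.

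One concrete technical point in addition: your proposed extension of $\Red_S^d(K)$ to all $d \ge 0$ keeps the condition $L(D) = k$, but for $d > g$ Riemann's inequality gives $\dim L(D) \ge d + 1 - g > 1$, so these sets are empty and $h_S(t)$ is just a polynomial of degree at most $g$. The paper's extension deliberately \emph{drops} $L(D) = k$ and replaces it by the conorm condition exactly so that the higher-degree coefficients are nontrivial and $h_\emptyset(t)$ can be divided out of $Z_K(t)$. For a general $K$ you would need a substitute for this device before the zeta-function step of your outline can even be formulated.
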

  
  Moreover, we have shown that the size of a hole next to a reduced divisor~$D \in \Red_S(K)$
  depends highly on $g - \deg D \in \{ 0, \dots, g \}$: in case $g = \deg D$, the size is
  zero. Otherwise, if~$S = S_1$, there usually exists at least one hole element next to
  $D$. Assuming that $\sigma(S)$ meets $\support D$ in $m_S$ places, the size of the hole next to
  $D$ behaves like $\frac{(\abs{S} - m_S)^{g - \deg D}}{(g - \deg D)!}$ for $\abs{S} \to \infty$.
  
  We conjecture that this holds in a similar way for all function fields:
  
  \begin{conjecture}
    Let $K$ be a function field of genus~$g$ with exact constant field~$\F_q$, and assume that $S$
    is a non-empty finite set of places of $K$, all of degree~one. Then there exists another finite
    set $S'$ with $S \subseteq S'$ of size $\abs{S'} = \calO(\abs{S})$ such that for all $D \in
    \Red_{S'}(K)$, we asymptotically have \[ \abs{h(D)} \sim \frac{\abs{S}^{g - \deg D}}{(g - \deg
    D)!} \] as $\abs{S} \to \infty$ with $S' \cap \support D = \emptyset$. Moreover, $\abs{h(D)}$
    only depends on $\deg D$ (and of course $S$ and $K$) as long as $S' \cap \support D =
    \emptyset$.
  \end{conjecture}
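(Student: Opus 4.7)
The plan is to transplant the two-step strategy of Sections~\ref{genfuns} and~\ref{onsizeofholes} to arbitrary function fields, replacing the conorm-based characterization of reduced divisors (Proposition~\ref{combinatoricaldescriptionofreds}) with a Riemann--Roch criterion. In the hyperelliptic case, being reduced was encoded by purely local conditions at each place relative to the unique degree-$2$ subfield~$R$. For general~$K$, no canonical subfield plays this role, but $D \in \Red_\frakp(K)$ can still be read off from the linear-algebra condition $\dim L(K_F - D) = g - \deg D$ (equivalently, $D$ imposes independent conditions on the canonical linear system $|K_F|$), together with $\nu_\frakp(D) = 0$ and $D \ge 0$.

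Step one is to construct an enlargement $S \subseteq S'$ with $\abs{S'} = \calO(\abs{S})$ whose defining property is that for every $D \in \Red_{S'}(K)$ with $\support D \cap S' = \emptyset$ and every effective divisor $E \ge 0$ with $\support E \subseteq S' \setminus \{\frakp\}$ and $\deg(D + E) \le g$, one has $D + E \in \Red_\frakp(K)$. This is an open condition cutting the points of $S'$ out of the Brill--Noether loci of special effective divisors of degrees up to~$g$; since those loci have codimension at least one in the symmetric products, a suitable $S'$ should exist with linear control on its size. Given such an $S'$, the map $E \mapsto D + E$ becomes a bijection between $h(D)$ and the set of nonzero effective divisors supported on $S' \setminus \{\frakp\}$ of degree at most $g - \deg D$. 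Since all relevant places have degree one, this count is the number of tuples $(n_\frakq)_{\frakq \in S' \setminus \{\frakp\}} \in \N^{\abs{S'} - 1}$ with $0 < \sum_\frakq n_\frakq \le g - \deg D$, whose dominant term is $(\abs{S'} - 1)^{g - \deg D}/(g - \deg D)!$; combined with $\abs{S'} = \calO(\abs{S})$, this gives the asserted asymptotic, and the claim that $\abs{h(D)}$ depends only on $\deg D$ follows from the bijection as long as the construction of $S'$ is uniform enough in~$D$.

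The main obstacle is step one: producing $S'$ with size linear in $\abs{S}$ while guaranteeing independence of linear conditions on $|K_F|$ uniformly for all small effective divisors supported on $S' \setminus \{\frakp\}$ augmented by any $D$ supported off~$S'$. In the hyperelliptic case this independence was automatic from the local rule on conorms; in general it is precisely the sort of question addressed by Brill--Noether theory, and obtaining quantitative (rather than merely existential) control over~$S'$ within the stated size bound appears to be the principal analytical difficulty. A secondary concern is that the leading constant in $\abs{S}^{g - \deg D}/(g - \deg D)!$ must match exactly, which forces $\abs{S'} - \abs{S}$ to be of lower order than $\abs{S}$; verifying this likely requires a refined construction of $S'$, perhaps by adding only a bounded number of auxiliary places per element of~$S$ and tracking the correction terms through the bijection above.
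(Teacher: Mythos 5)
The statement you are addressing is one of the paper's open conjectures (Section~\ref{conclusion}); the paper offers no proof of it, only the hyperelliptic special case (Section~\ref{onsizeofholes}, where $S' = S \cup \sigma(S)$ works) as supporting evidence. Your proposal therefore cannot be matched against an existing argument and must stand on its own, and it does not: you explicitly leave the central step --- constructing $S'$ with $\abs{S'} = \calO(\abs{S})$ and the required independence property, which you correctly identify as a quantitative Brill--Noether problem --- as an unresolved ``principal analytical difficulty''. A plan whose key lemma is acknowledged to be unproved is not a proof.

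Beyond that acknowledged gap, the property you demand of $S'$ in step one is too strong to be achievable: you require that for \emph{every} effective $E$ supported on $S' \setminus \{\frakp\}$ with $\deg(D+E) \le g$, the divisor $D + E$ remain in $\Red_\frakp(K)$, so that $h(D)$ is in bijection with all such nonzero $E$. This already fails in the hyperelliptic case the paper does prove: if $\frakq \neq \sigma(\frakq)$ both lie in $S$, then $E = \frakq + \sigma(\frakq)$ gives $D + E \ge \Conorm_{K/R}(\frakq \cap R)$, which is never reduced. That is exactly why Proposition~\ref{holesizeprop} counts tuples in $\{0,1\}^n \times \Z^\ell \times \N^{m'_D + r}$ rather than all of $\N^{\abs{S}-1}$; the two counts agree only in their leading term. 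So the most one could hope for is that the non-reduced $E$ form a lower-order set, which might salvage the asymptotic $\abs{h(D)} \sim \abs{S}^{g - \deg D}/(g - \deg D)!$ but not your route to the conjecture's second claim (that $\abs{h(D)}$ depends only on $\deg D$), which needs the exact combinatorial structure of the reduced set, not just its leading term. There is also a bookkeeping slip: $h(D)$ is defined via $\sim_S$, so the relevant $E$ are supported on $S \setminus \{\frakp\}$, not $S' \setminus \{\frakp\}$; your closing worry that $\abs{S'} - \abs{S}$ might perturb the leading constant stems from this conflation and would disappear once the two sets are kept apart.
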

  
  In the case of hyperelliptic function fields with quadratic rational subfield~$R$ and the unique
  non-trivial $R$-automorphism $\sigma$ of $K$, we can set $S' = S \cup \sigma(S)$ and obtain
  $\abs{S'} \le 2 \abs{S}$. We moreover assume that a result similar to
  Proposition~\ref{zerodivisorholesize} holds in general:
  
  \begin{conjecture}
    Let $K$ be a function field of genus~$g$ with exact constant field~$\F_q$, and assume that $S$
    is a non-empty finite set of places of $K$, all of degree~one, with at least one place $\neq
    \frakp$ which lies outside a finite set only dependent of $K$. Then $\abs{h(D)}$ is maximal for
    $D \in \Red_S(K)$ if, and only if, $D = 0$.
  \end{conjecture}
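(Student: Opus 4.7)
The plan is to generalize the argument from Proposition~\ref{zerodivisorholesize} by replacing the role played there by ``places not lying above ramified places of $R$'' with the role played in general by non-Weierstrass points. The finite set depending only on $K$ will be the (finite) set of Weierstrass points of $K$, together with $\frakp$ itself.

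First, I would give a concrete description of $h(D)$ analogous to the one used in the hyperelliptic case. If $D \in \Red_S(K)$ and $D' \in \Red_\frakp(K)$ satisfies $\ideal_S(D') = \ideal_S(D)$, then $D$ and $D'$ agree off $S$, so $D' = D + E$ with $E$ supported in $S \setminus \{ \frakp \}$; since $D$ vanishes on $S \setminus \{ \frakp \}$ (as $D \in \Red_S(K)$) and $D' \ge 0$, we have $E \ge 0$, and reducedness of $D'$ becomes $\deg(D + E) \le g$ together with $L(D + E) = k$. Thus $h(D) = \{ E \ne 0 \mid E \ge 0, \support E \subseteq S \setminus \{\frakp\}, \deg(D + E) \le g, L(D + E) = k \}$. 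The map $h(D) \to h(0)$, $D + E \mapsto E$, is clearly injective; it is well-defined because $0 \le E \le D + E$ yields $L(E) \subseteq L(D + E) = k$, hence $L(E) = k$.

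Next, I would invoke the classical fact that a place $\frakp'$ of degree one is a \emph{Weierstrass point} precisely when $\dim L(g \frakp') > 1$, equivalently when the Weierstrass gap sequence at $\frakp'$ differs from $\{1, \dots, g\}$; the set $W(K)$ of Weierstrass points is finite (of size bounded by $g^3 - g$) and depends only on $K$. Take the finite exceptional set to be $W(K) \cup \{\frakp\}$, and let $\frakp' \in S \setminus (W(K) \cup \{\frakp\})$, which exists by hypothesis. Then $g \frakp' \ge 0$, $\deg g \frakp' = g$, $\nu_\frakp(g \frakp') = 0$, and $L(g\frakp') = k$ by the non-Weierstrass property, so $g \frakp' \in h(0)$. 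Now for any $D \in \Red_S(K)$ with $D \ne 0$, one has $\deg D \ge 1$, hence $\deg(D + g\frakp') \ge g + 1$; this forces $D + g \frakp' \notin \Red_\frakp(K)$, so $g \frakp'$ is not in the image of the injective map $h(D) \to h(0)$. Therefore $\abs{h(D)} < \abs{h(0)}$, giving both directions of the ``if and only if''.

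The main obstacle, relative to the hyperelliptic case, is isolating the right finite set and proving $L(g \frakp') = k$ for $\frakp'$ outside it; the hyperelliptic proof could appeal to Proposition~\ref{combinatoricaldescriptionofreds}, but in the general case one has no such combinatorial description and must go through Riemann--Roch and the Weierstrass gap theorem. A secondary technical point is to confirm that $W(K)$ as defined at the level of the algebraic closure intersects the set of $\F_q$-rational places in a set still depending only on $K$, which is automatic since $W(K)$ is Galois-stable and finite. With these ingredients, the conclusion follows verbatim from the argument of Proposition~\ref{zerodivisorholesize}, now applied in the general global function field setting.
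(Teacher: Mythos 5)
Note first that the paper does not prove this statement: it appears in the concluding section as an open conjecture, with only the hyperelliptic case established (Proposition~\ref{zerodivisorholesize}), so there is no proof of record to compare yours against. Your general setup is correct and is the natural extension of the hyperelliptic argument: the identification of $h(D)$ with the nonzero effective divisors $E$ supported on $S \setminus \{\frakp\}$ satisfying $L(D+E) = k$, the injectivity and well-definedness of $h(D) \to h(0)$, $D + E \mapsto E$ (via $L(E) \subseteq L(D+E)$), and the observation that a single $E_0 \in h(0)$ of degree exactly $g$ fails to lie in the image for every $D \neq 0$ are all sound, and the last step is exactly the mechanism of the proof of Proposition~\ref{zerodivisorholesize} (where $E_0 = g\frakp'$ for $\frakp'$ over an unramified place).

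The gap is your finiteness claim for the exceptional set. You define $W(K)$ as the set of degree-one places $\frakp'$ with $\dim L(g\frakp') > 1$ and assert it is finite of size at most $g^3 - g$; that bound, and more importantly the underlying Weierstrass gap theorem in the form you use it, is a characteristic-zero (or $p > 2g-2$) statement. Over $\F_q$ there exist \emph{non-classical} function fields, for which the generic order sequence of the canonical linear system is not $(0, 1, \dots, g-1)$; the standard example is the Hermitian (Fermat quartic) function field over $\F_9$ in characteristic $3$, of genus $3$, where the canonical order sequence at \emph{every} place is $(0,1,3)$, so the gap sequence at every degree-one place is $\{1,2,4\}$ and hence $\dim L(3P) \ge 2$ everywhere. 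For such $K$ your $W(K)$ contains every degree-one place, the hypothesis ``$S$ contains a place $\neq \frakp$ outside $W(K) \cup \{\frakp\}$'' can never be satisfied by an admissible $S$, and your argument proves only a vacuous statement. In other words, your proof establishes the conjecture for classical function fields (in particular whenever $p > 2g - 2$, and in the hyperelliptic case, where the bad places are exactly the ramification points, consistent with the paper's remark), but not in the generality claimed; to handle non-classical $K$ one would need either a different choice of $E_0$ (e.g.\ supported on several places of $S$, which the single-good-place hypothesis does not provide) or a genuinely different argument, and it is not even clear from your approach what the right finite exceptional set should be in that case.
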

  
  In the hyperelliptic case, the finite set of places of $K$ which have to be avoided are the places
  lying above places of $R$ which ramify in $K$.
  
%
  
\newcommand{\etalchar}[1]{$^{#1}$}

  
\end{document}